\def\beq{\begin{equation}}
\def\eeq{\end{equation}}
\def\ba{\begin{array}}
\def\ea{\end{array}}
\newtheorem{thm}{Theorem}[section]
\newtheorem{lm}[thm]{Lemma}
\newtheorem{prop}[thm]{Proposition}
\newtheorem{crl}[thm]{Corollary}
\theoremstyle{definition}
\newtheorem{rem}[thm]{Remark}
\theoremstyle{remark}
\begin{document}
\pagestyle{plain}
\title{Sign-changing solutions to discrete nonlinear logarithmic Kirchhoff equations}

\author{Lidan Wang}
\email{wanglidan@ujs.edu.cn}
\address{Lidan Wang: School of Mathematical Sciences, Jiangsu University, Zhenjiang 212013, People's Republic of China}

\begin{abstract}
In this paper, we study the discrete logarithmic Kirchhoff equation
$$
-\left(a+b \int_{\mathbb{Z}^3}|\nabla u|^{2} d \mu\right) \Delta u+(\lambda h(x)+1) u=|u|^{p-2}u \log u^{2}, \quad x\in \mathbb{Z}^3,
$$
where $a,b>0, p>6$ and $\lambda$ is a positive parameter. Under suitable assumptions on $h(x)$, we prove the existence and asymptotic behavior of least energy sign-changing solutions for the equation by the method of Nehari manifold.
\end{abstract}

\maketitle

{\bf Keywords:} Discrete logarithmic Kirchhoff equations, existence, asymptotic behavior, sign-changing solutions, Nehari manifold method

\
\

{\bf Mathematics Subject Classification 2020:} 35J20, 35J60, 35R02.

\section{Introduction}
The Kirchhoff-type equation
\begin{equation}\label{z}
    -\left(a+b \int_{\mathbb{R}^3}|\nabla u|^{2} d \mu\right) \Delta u+\omega(x) u=g(x,u),\quad u\in H^1(\mathbb{R}^3),
\end{equation}
where $a,\,b>0$, has drawn lots of interest in recent years due to the appearance of $(\int_{\mathbb{R}^3}|\nabla u|^{2}\,d \mu)\Delta u$.  See for examples \cite{CT,G,HQ,HZ,TS,W,WT} for the general nonlinearity $g(x,u)$. For the equations with nonlinearity of power type $g(x,u)=|u|^{p-2}u$, we refer the readers to \cite{LY,LL,SZ}. Moreover, for the equations with convolution nonlinearity $g(x,u)=\left(I_\alpha \ast F(u)\right)f(u)$, where $I_\alpha$ is the Riesz potential, we refer the readers to  \cite{CZ,H,LS,LD,ZZ1}. In particular, for $\omega(x)=(\lambda h(x)+1)$ and $f(u)=|u|^{p-2}u$ with $p\in(2,3+\alpha)$, L\"{u} \cite{L} demonstrated the existence and asymptotic behavior 
of ground state solutions by the
Nehari manifold and the concentration compactness principle.
For more related works about the Choquard-type nonlinearity, we refer the readers to \cite{GT,HT,LP,YG}.

In many physical applications, the logarithmic nonlinearity $g(x,u)=|u|^{p-2}u\log u^2$ in the equation (\ref{z}) appears naturally.
In particular, for $a=1, b=0$ and $p = 2$, the equation (\ref{z}) turns into the well
known logarithmic Schr\"{o}dinger equation, which has been studied extensively in the
literature, see for examples \cite{AJ1,AJ,FT,HL,S,ZZ2}. For $a,\,b>0$, Wen, Tang and Chen \cite{WTC} considered the logarithmic Kirchhoff equation with $p\in(4,6)$ in a bounded domain of $\mathbb{R}^3$ and established the existence of ground state solutions and
ground state sign-changing solutions with precisely two nodal domains. Li, Wang and Zhang \cite{LWZ} generalized the results of \cite{WTC}  to a class of $p$-Laplacian Kirchhoff-type
equations with logarithmic nonlinearity. Hu and Gao \cite{HG} studied the logarithmic Kirchhoff equation with $p\in(4,6)$ in $\mathbb{R}^3$ and proved the existence of positive solutions and sign-changing solutions by the constraint variational method and Brouwer degree theory under suitable assumptions on the potential $\omega(x)$. Later, Gao et al. \cite{GJ} considered a more complicated logarithmic Kirchhoff equation in $\mathbb{R}^3$ and obtained the existence of trivial solutions for large $b > 0$ and two positive solutions for small $b>0$.

Nowadays, many researchers  turn to study  differential equations on graphs, especially for the nonlinear elliptic equations. See for examples \cite{GLY,HSZ,HLW,HX,W2,ZZ} for the  discrete nonlinear  Schr\"{o}diner equations. For the  discrete nonlinear Choquard equations, we refer the readers to \cite{LW,LZ1,LZ,W1}.  For the equations with logarithmic nonlinearity on graphs, one can see \cite{CWY,CR,HJ,OZ,SY}. It is worth noting that the authors in \cite{CR}  established the existence and asymptotic behavior of least energy sign-changing solutions for a class of logarithmic Schr\"{o}dinger equations. Recently, L\"{u} \cite{L1} proved the existence of ground state solutions for a class of Kirchhoff equations on lattice graphs $\mathbb{Z}^3$. Wang \cite{W3} demonstrated the existence of nontrivial solutions and ground
state solutions for the general Kirchhoff-Choquard equations on lattice graphs $\mathbb{Z}^3$. To the best of our knowledge, there is no existence results for the logarithmic Kirchhoff equations on infinite graphs. Motivated by the works mentioned above, in this paper, we would like to study a class of Kirchhoff-type equations with logarithmic nonlinearity on lattice graphs $\mathbb{Z}^3$ and discuss the existence and asymptotic behavior of least energy sign-changing solutions.

Let us first give some notations. Let $\Omega$ be a subset of $\mathbb{Z}^3$, we denote by $C(\Omega)$ the set of functions on $\Omega$ and $C_{c}(\Omega)$ the set of all functions on $\Omega$ with finite support, where the support of $u\in C(\Omega)$ is
defined as $\text{supp}(u):=\{x \in \Omega : u(x)\neq 0\}$. Moreover, we denote by the $\ell^p(\Omega)$ the space of $\ell^p$-summable functions on $\Omega$. For convenience, for any $u\in C(\Omega)$, we always write
$
\int_{\Omega}u\,d\mu=\sum\limits_{x\in \Omega}u(x),$ where $\mu$ is the counting measure in $\Omega$.

In this paper, we consider the following logarithmic kirchhoff equation
\begin{equation}\label{0.2}
-\left(a+b \int_{\mathbb{Z}^3}|\nabla u|^{2} d \mu\right) \Delta u+(\lambda h(x)+1)u=|u|^{p-2}u \log u^{2}, \quad x\in\mathbb{Z}^3,
\end{equation}
where $a, b>0, p>6$ and $\lambda$ is a positive parameter. Here $\Delta u(x)=\sum\limits_{y \sim x}(u(y)-u(x))$ and $|\nabla u(x)|=\left(\frac{1}{2} \sum\limits_{y \sim x}(u(y)-u(x))^{2}\right)^{\frac{1}{2}}.$ 
We always assume that the potential $h$ satisfies 
\begin{itemize}
    \item[$(h_1)$] $h(x) \geq 0$ and the potential well $\Omega=\{x \in \mathbb{Z}^3: h(x)=0\}$ is a nonempty, connected and bounded domain in $\mathbb{Z}^3$;
    \item[$(h_2)$] there exists $M>0$ such that $\mu(\{x \in \mathbb{Z}^3: h(x)<M\})<\infty$.
\end{itemize}

Note that for any $q>p$, we have
$$\lim\limits_{t\rightarrow 0}\frac{t^{p-1}\log t^2}{t}=0, \qquad\text{and}\qquad \lim\limits_{t\rightarrow \infty}\frac{t^{p-1}\log t^2}{t^{q-1}}=0.$$
Then for any $\varepsilon>0$ , there exists $C_\varepsilon>0$ such that
\begin{equation}\label{1.2}
|t|^{p-1}|\log t^2|\leq \varepsilon |t|+C_\varepsilon |t|^{q-1}, \quad t\neq 0.
\end{equation}

Let $H^1(\mathbb{Z}^3)$ be the completion of $C_{c}(\mathbb{Z}^3)$ under the norm
$$
\|u\|_{H^1}=\left(\int_{\mathbb{Z}^3}\left(|\nabla u|^{2}+u^{2}\right) d \mu\right)^{\frac{1}{2}}.
$$
The space $H^1(\mathbb{Z}^3)$ is a Hilbert space with the inner product
$$
\langle u, v\rangle_{H^1}=\int_{\mathbb{Z}^3}(\nabla u \nabla v+u v) d \mu,
$$
where $\nabla u\nabla v=\frac{1}{2} \sum\limits_{y \sim x}(u(y)-u(x))(v(y)-v(x))$ is the gradient form.

For any $\lambda>0$, we introduce a space
$
\mathcal{H}_{\lambda}=\left\{u \in H^1(\mathbb{Z}^3): \int_{\mathbb{Z}^3} \lambda h(x) u^{2}\,d \mu<\infty\right\}
$
equipped with the norm
$$
\|u\|_{\mathcal{H}_{\lambda}}^{2}=\int_{\mathbb{Z}^3}\left(a|\nabla u|^{2}+h_\lambda(x) u^{2}\right)\,d \mu,
$$
where $h_\lambda(x)=(\lambda h(x)+1)$ and $a>0$.
The space $\mathcal{H}_{\lambda}$ is a Hilbert space with the inner product
$$
\langle u, v\rangle_{\mathcal{H}_{\lambda}}=\int_{\mathbb{Z}^3}(a\nabla u \nabla v+h_\lambda(x) u v)\,d \mu.
$$

The energy functional $J_{\lambda}: \mathcal{H}_\lambda\rightarrow \mathbb{R}$ associated to the equation (\ref{0.2}) is given by
\begin{equation*}
J_{\lambda}(u)=\frac{1}{2} \int_{\mathbb{Z}^3}\left(a|\nabla u|^{2}+h_\lambda(x) u^{2}\right) d \mu+\frac{b}{4}\left(\int_{\mathbb{Z}^3}|\nabla u|^{2} d \mu\right)^{2}+\frac{2}{p^2}\int_{\mathbb{Z}^3}|u|^p\,d\mu-\frac{1}{p} \int_{\mathbb{Z}^3} |u|^{p} \log u^{2} d \mu. 
\end{equation*}
By (\ref{1.2}), one gets easily that $J_\lambda\in C^1(\mathcal{H}_\lambda,\mathbb{R})$ and, for $\phi\in \mathcal{H}_\lambda$,
\begin{eqnarray*}
(J'_\lambda(u),\phi)=\int_{\mathbb{Z}^3}\left(a \nabla u \nabla \phi+h_\lambda(x) u \phi\right)\,d \mu+b \int_{\mathbb{Z}^3}|\nabla u|^{2}\,d \mu \int_{\mathbb{Z}^3} \nabla u \nabla \phi\,d \mu-\int_{\mathbb{Z}^3}|u|^{p-2}u\phi\log u^2\,d\mu.
\end{eqnarray*}
We say that $u\in\mathcal{H}_\lambda$ is a weak solution of the equation (\ref{0.2}), if $u$ is a critical point of the energy functional $J_\lambda$, i.e. $J'_\lambda(u)=0$.  A sign-changing solution of the equation (\ref{0.2}) means that $u$ is a weak solution of the equation (\ref{0.2}) such that $u^{\pm}\not\equiv 0$, where $u^{+}=\max \{u, 0\}$ and $u^{-}=\min \{u, 0\}$.

Now we define the Nehari manifold and sign-changing Nehari set respectively by
$$
\begin{aligned}
& \mathcal{N}_{\lambda}=\left\{u \in \mathcal{H}_{\lambda} \backslash\{0\}: (J_{\lambda}^{\prime}(u),u)=0\right\},\qquad \mathcal{M}_{\lambda}=\left\{u \in \mathcal{H}_{\lambda}: u^{ \pm} \neq 0 \text { and } (J_{\lambda}^{\prime}(u), u^{+})=(J_{\lambda}^{\prime}(u),u^{-})=0\right\}.
\end{aligned}
$$
 Clearly, $\mathcal{N}_{\lambda}$ contains all the nontrivial solutions of the equation (\ref{0.2}) and the set $\mathcal{M}_{\lambda}$ contains all the sign-changing solutions of the equation (\ref{0.2}). Let
$$
c_{\lambda}=\inf _{u \in \mathcal{N}_{\lambda}} J_{\lambda}(u), \quad m_{\lambda}=\inf _{u \in \mathcal{M}_{\lambda}} J_{\lambda}(u).
$$

Now we state our first result.

\begin{thm}\label{t1}
Let $p>6$ and $(h_1)$-$(h_2)$ hold. Then there exists a constant $\lambda_{0}>0$ such that, for all $\lambda \geq \lambda_{0}$, the equation (\ref{0.2}) has a least energy sign-changing solution $u_{\lambda} \in \mathcal{H}_{\lambda}$ satisfying $J_{\lambda}\left(u_{\lambda}\right)=m_{\lambda}$. Moreover, $m_{\lambda}>2 c_{\lambda}$.
\end{thm}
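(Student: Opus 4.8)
The plan is to realize the least-energy sign-changing level $m_\lambda$ as a constrained minimum over $\mathcal{M}_\lambda$ and then to upgrade the minimizer to a free critical point of $J_\lambda$, following the Nehari/Bartsch--Weth scheme adapted to the nonlocal logarithmic setting. The cornerstone is a fibering lemma: for every $u\in\mathcal{H}_\lambda$ with $u^\pm\neq0$ there is a unique pair $(s_u,t_u)\in(0,\infty)^2$ such that $s_uu^++t_uu^-\in\mathcal{M}_\lambda$, and this pair is the unique global maximizer of $g(s,t):=J_\lambda(su^++tu^-)$ on $[0,\infty)^2$. Expanding $g$, the two single-sign parts decouple in the $h_\lambda$-quadratic term and in every $\ell^p$ and logarithmic term (disjoint supports), and couple only through the gradient cross term $\int_{\mathbb{Z}^3}\nabla u^+\nabla u^-\,d\mu$ --- which on the lattice is nonnegative --- and through the Kirchhoff quartic $\frac b4(\int|\nabla u|^2 d\mu)^2$. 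The auxiliary term $\frac{2}{p^2}\int|u|^p d\mu$ is tailored so that the primitive $W(t)=\frac{2}{p^2}|t|^p-\frac1p|t|^p\log t^2$ obeys $W'(t)=-|t|^{p-2}t\log t^2$, which cancels the spurious factors in $\partial_sg,\partial_tg$. Since $p>6>4$, one has $g(s,t)\to-\infty$ as $|(s,t)|\to\infty$ while $g$ is positive on part of the open quadrant, giving an interior maximizer (one checks the maximum is not attained on the axes). The delicate point --- and the step I expect to be the main obstacle --- is uniqueness of $(s_u,t_u)$, since the Kirchhoff coupling turns the two Nehari conditions into a genuinely two-dimensional system; I would handle it by showing the Hessian of $g$ is negative definite at any interior critical point (the two-variable counterpart of the scalar fact that $s\mapsto J_\lambda(su^+)$ has strictly negative second derivative at its critical point), so critical points are isolated strict maxima and hence unique. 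This is where the hypothesis $p>6$ enters.

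With the fibering lemma in hand, $\mathcal{M}_\lambda\neq\emptyset$, and the identity $J_\lambda(u)-\frac1p(J_\lambda'(u),u)=(\tfrac12-\tfrac1p)\|u\|_{\mathcal{H}_\lambda}^2+(\tfrac14-\tfrac1p)b(\int|\nabla u|^2 d\mu)^2+\frac{2}{p^2}\int|u|^p d\mu$, whose coefficients are all positive (as $p>6>4$), shows $m_\lambda>0$ and that any minimizing sequence $\{u_n\}\subset\mathcal{M}_\lambda$ is bounded in $\mathcal{H}_\lambda$. The role of $\lambda\geq\lambda_0$ is compactness: under $(h_1)$--$(h_2)$ the embedding $\mathcal{H}_\lambda\hookrightarrow\ell^q(\mathbb{Z}^3)$ is compact for every $q\geq2$ once $\lambda$ is large, so along a subsequence $u_n\rightharpoonup u_\lambda$ in $\mathcal{H}_\lambda$ and $u_n^\pm\to u_\lambda^\pm$ strongly in each $\ell^q$. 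A uniform lower bound $\|u_n^\pm\|_{\mathcal{H}_\lambda}\geq\rho>0$ on $\mathcal{M}_\lambda$, obtained from (\ref{1.2}) and the embeddings, together with strong $\ell^p$-convergence forces $u_\lambda^\pm\neq0$. Applying the fibering lemma to $u_\lambda$ produces $(s_\lambda,t_\lambda)$ with $s_\lambda u_\lambda^++t_\lambda u_\lambda^-\in\mathcal{M}_\lambda$; combining weak lower semicontinuity of the quadratic and Kirchhoff parts with strong convergence of the nonlinear parts and the maximality of $(1,1)$ for each $u_n$ yields $J_\lambda(s_\lambda u_\lambda^++t_\lambda u_\lambda^-)\leq m_\lambda$, which forces $s_\lambda=t_\lambda=1$ and $J_\lambda(u_\lambda)=m_\lambda$. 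Thus the infimum is attained at $u_\lambda\in\mathcal{M}_\lambda$.

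To show $u_\lambda$ is an actual sign-changing solution, i.e. $J_\lambda'(u_\lambda)=0$, I would argue by contradiction with a quantitative deformation lemma: if $J_\lambda'(u_\lambda)\neq0$, then $J_\lambda'$ is bounded away from $0$ on a small ball around $u_\lambda$, and one can build a deformation lowering $J_\lambda$ there while leaving the sign-changing structure intact; feeding the deformed map into $(s,t)\mapsto J_\lambda(su_\lambda^++tu_\lambda^-)$ and using that $(1,1)$ is the \emph{strict} maximizer produces an element of $\mathcal{M}_\lambda$ with energy strictly below $m_\lambda$, a contradiction. This is the standard Bartsch--Weth argument; the only care needed is that the deformation respects the decomposition into positive and negative parts.

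Finally, for $m_\lambda>2c_\lambda$ I would use the attained minimizer $u_\lambda$. Each single-sign part projects onto $\mathcal{N}_\lambda$: there exist $s^\ast,t^\ast>0$ with $s^\ast u_\lambda^+,\,t^\ast u_\lambda^-\in\mathcal{N}_\lambda$, hence $J_\lambda(s^\ast u_\lambda^+)\geq c_\lambda$ and $J_\lambda(t^\ast u_\lambda^-)\geq c_\lambda$. Because $(1,1)$ maximizes $(s,t)\mapsto J_\lambda(su_\lambda^++tu_\lambda^-)$, we get $m_\lambda=J_\lambda(u_\lambda)\geq J_\lambda(s^\ast u_\lambda^++t^\ast u_\lambda^-)$. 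In this last quantity the $h_\lambda$-quadratic, $\ell^p$ and logarithmic terms split exactly over the disjoint supports, the gradient cross term contributes $a s^\ast t^\ast\int\nabla u_\lambda^+\nabla u_\lambda^-\,d\mu\geq0$, and the Kirchhoff quartic contributes the strictly positive surplus $\frac b2 (s^\ast t^\ast)^2\int|\nabla u_\lambda^+|^2 d\mu\int|\nabla u_\lambda^-|^2 d\mu$ together with further nonnegative cross terms. Therefore $J_\lambda(s^\ast u_\lambda^++t^\ast u_\lambda^-)>J_\lambda(s^\ast u_\lambda^+)+J_\lambda(t^\ast u_\lambda^-)\geq2c_\lambda$, whence $m_\lambda>2c_\lambda$. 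The strictness is precisely the signature of the nonlocal Kirchhoff interaction, and it is the attainment of $m_\lambda$ from Steps 1--2 that upgrades the a priori bound $m_\lambda\geq2c_\lambda$ to the strict inequality.
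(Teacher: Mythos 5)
Your proposal follows the same overall skeleton as the paper: Miranda's theorem to produce the fibering pair (Lemma~\ref{4}), Nehari minimization using the identity $J_\lambda(u)-\frac1p(J'_\lambda(u),u)$ and the compactness of Lemma~\ref{l2} for $\lambda\geq\lambda_0$ (Lemma~\ref{l9}), a deformation argument for criticality --- the paper implements your Bartsch--Weth step concretely, with a cutoff $\eta(s,t)$ and a second application of Miranda's theorem in Lemma~\ref{l8} --- and the strictly positive Kirchhoff surplus for $m_\lambda>2c_\lambda$. You also correctly isolate the lattice-specific feature $\int\nabla u^+\nabla u^-\,d\mu=-\frac12K_V(u)\geq0$, which makes every coupling term favorable. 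The one genuine divergence is uniqueness of $(s_u,t_u)$: the paper never touches second derivatives. Instead, Lemma~\ref{l3} expands $J_\lambda(u)-J_\lambda(su^++tu^-)$ for $u\in\mathcal{M}_\lambda$, inserts the constraint identities $(J'_\lambda(u),u^\pm)=0$, and reduces strict positivity to two elementary scalar facts: $f(x)=2(1-x^p)+px^p\log x^2>0$ for $x\neq1$, and strict monotonicity of $x\mapsto(1-a^x)/x$. The hypothesis $p>6$ enters precisely in the terms generated by $K_V(u)\|\nabla u^\pm\|_2^2$, where the $(1-s^3t)$-type couplings force a comparison of $\frac{1-s^6}{6}$ with $\frac{1-s^p}{p}$. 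Uniqueness for general $u$ then follows by rescaling two putative pairs against each other, and the same lemma does triple duty: it supplies the strict maximality of $(1,1)$ used in the deformation step and the inequality $m_\lambda=J_\lambda(u)\geq J_\lambda(s_{u^+}u^++t_{u^-}u^-)$ in the proof of $m_\lambda>2c_\lambda$.

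The gap in your route is the step you yourself flag as the main obstacle, and it is twofold. First, negative definiteness of the Hessian at every interior critical point does not by itself yield uniqueness: isolated strict maxima could a priori coexist, so you still need a global/topological ingredient (degree or Poincar\'e--Hopf on a large square, after checking $\partial_s g>0$ near $\{s=0\}$ and $g\to-\infty$ at infinity, or a mountain-pass argument producing a non-maximal critical point between two maxima). Second, the negative definiteness itself is left unexecuted and is genuinely delicate: writing $\Gamma=\int\nabla u^+\nabla u^-\,d\mu$, $G_\pm=\|\nabla u^\pm\|_2^2$ and $Q=\|\nabla(su^++tu^-)\|_2^2$, one can show at a critical point that $\partial^2_{ss}g\leq-(p-2)\|u^+\|^2_{\mathcal{H}_\lambda}-(p-4)bQG_+-2s^{p-2}\int|u^+|^p\,d\mu$ via the Cauchy--Schwarz bound $\Gamma^2\leq G_+G_-$, but the mixed partial $\partial^2_{st}g=a\Gamma+bQ\Gamma+2b(sG_++t\Gamma)(tG_-+s\Gamma)$ is positive and large, and the determinant inequality $\partial^2_{ss}g\,\partial^2_{tt}g>(\partial^2_{st}g)^2$ requires careful term-by-term bookkeeping (a $(p-4)^2>4$-type condition, plausible exactly for $p>6$, but a real computation). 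The paper's comparison identity circumvents both issues at once. Everything else in your proposal is sound and matches the paper in substance; in fact your derivation of strictness in $m_\lambda>2c_\lambda$ from the product term $\frac b2\|\nabla u^+\|_2^2\|\nabla u^-\|_2^2>0$ is more robust than the paper's stated reliance on $K_V(u)<0$, which can vanish when the positive and negative supports are not adjacent.
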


In order to study the asymptotic behavior of $u_\lambda$ as $\lambda\rightarrow\infty$, we introduce the limiting equation defined on the finite potential well $\Omega$
\begin{equation}\label{1.8}
 \begin{cases}-\left(a+b \int_{\Omega}|\nabla u|^{2} d \mu\right) \Delta u+u=|u|^{p-2}u \log u^{2}, & x\in\Omega,  \\ u(x)=0, & x\in \partial \Omega.
 \end{cases}
\end{equation}

Different from the continuous case, the Laplacian and the gradient form of $u$ on a bounded domain $\Omega$ need additional information on the vertex boundary of $\Omega$, which is defined by $$\partial\Omega=\{y\in\Omega^c :\exists~x\in\Omega~ \text{such~that}~y\sim x \},$$
where $\Omega^c=\mathbb{Z}^3\backslash\Omega.$

Let $\mathcal{H}^1_0(\Omega)$ be the completion of $C_c(\Omega)$ with respect to the norm
$$
\|u\|_{\mathcal{H}_{0}^{1}(\Omega)}=\left(\int_{\Omega \cup \partial \Omega}a|\nabla u|^{2} d \mu+\int_{\Omega} |u|^2\,d\mu\right)^{\frac{1}{2}}.
$$
The space  $\mathcal{H}_{0}^{1}(\Omega)$ is a Hilbert space with the inner product
$$
\langle u, v\rangle_{\mathcal{H}_{0}^1}=\int_{\Omega \cup \partial \Omega}a\nabla u \nabla v\,d\mu+\int_{\Omega}u v\,d \mu.
$$

The energy functional $J_{\Omega}: \mathcal{H}_{0}^{1}(\Omega) \rightarrow \mathbb{R}$ associated to the equation (\ref{1.8}) is given by
$$
J_{\Omega}(u)=\frac{1}{2}\int_{\Omega \cup \partial \Omega}a|\nabla u|^{2} d \mu+\frac{1}{2}\int_{\Omega}u^{2} d \mu+\frac{b}{4}\left(\int_{\Omega \cup \partial \Omega}|\nabla u|^{2} d \mu\right)^{2}+\frac{2}{p^2}\int_{\Omega}|u|^p\,d\mu-\frac{1}{p} \int_{\Omega} |u|^{p} \log u^{2} d \mu.
$$
Similarly, one gets that $J_\Omega\in C^1(\mathcal{H}^1_0,\mathbb{R})$ and, for $\phi\in \mathcal{H}^1_0(\Omega),$
$$(J'_\Omega(u),\phi)=\int_{\Omega \cup \partial \Omega}a \nabla u \nabla \phi\,d \mu+\int_{\Omega}u\phi\,d\mu+b \int_{\Omega \cup \partial \Omega}|\nabla u|^{2}\,d \mu \int_{\Omega \cup \partial \Omega} \nabla u \nabla \phi\,d \mu-\int_{\Omega}|u|^{p-2}u\phi\log u^2\,d\mu.$$
We say that $u\in\mathcal{H}^1_0(\Omega)$ is a weak solution of the (\ref{1.8}), if $u$ is a critical point of the energy functional $J_\Omega$, i.e. $J'_\Omega (u)=0$. A sign-changing solution of the equation (\ref{1.8}) means that $u$ is a weak solution of the equation (\ref{1.8}) such that $u^{\pm}\not\equiv 0$.

Let
$$
c_{\Omega}=\inf _{u \in \mathcal{N}_{\Omega}} J_{\lambda}(u), \quad m_{\Omega}=\inf _{u \in \mathcal{M}_{\Omega}} J_{\Omega}(u),
$$
where
$$
\begin{aligned}
& \mathcal{N}_{\Omega}=\left\{u \in \mathcal{H}_{0}^{1}(\Omega) \backslash\{0\}: (J_{\Omega}^{\prime}(u),u)=0\right\},\\& \mathcal{M}_{\Omega}=\left\{u \in \mathcal{H}_{0}^{1}(\Omega): u^{ \pm} \neq 0 \text { and } (J_{\Omega}^{\prime}(u),u^{+})=(J_{\Omega}^{\prime}(u),u^{-})=0\right\}.
\end{aligned}
$$

Our rest results are as follows.
\begin{thm}\label{t2}
  Let $p>6$ and $(h_1)$-$(h_2)$ hold. Then the equation (\ref{1.8}) has a least energy sign-changing solution $u_{0} \in \mathcal{H}_{0}^{1}(\Omega)$ satisfying $J_{\Omega}\left(u_{\Omega}\right)=m_{\Omega}$. Moreover, $m_{\Omega}>2 c_{\Omega}$.   
\end{thm}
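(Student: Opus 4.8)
The plan is to run the same Nehari-manifold scheme that underlies Theorem~\ref{t1}, but to exploit the decisive simplification coming from the boundedness of $\Omega$. By $(h_1)$ the set $\Omega$ is bounded, hence finite, so $C_c(\Omega)$ is finite-dimensional and already complete; consequently $\mathcal{H}_0^1(\Omega)=C_c(\Omega)$ is finite-dimensional. In particular weak and strong convergence coincide and every bounded sequence has a convergent subsequence, so all the compactness difficulties present on $\mathbb{Z}^3$ (which there force the potential-well hypotheses and the largeness of $\lambda$) disappear entirely here.

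First I would analyze, for a fixed $u\in\mathcal{H}_0^1(\Omega)$ with $u^{\pm}\not\equiv 0$, the two-parameter fibering map $\psi_u(s,t)=J_\Omega(su^++tu^-)$ on $[0,\infty)^2$. The essential structural feature on a graph is that the decomposition $u=u^++u^-$ does \emph{not} split the gradient energy:
\[
\int_{\Omega\cup\partial\Omega}|\nabla(su^++tu^-)|^2\,d\mu=s^2\int_{\Omega\cup\partial\Omega}|\nabla u^+|^2\,d\mu+t^2\int_{\Omega\cup\partial\Omega}|\nabla u^-|^2\,d\mu+2st\int_{\Omega\cup\partial\Omega}\nabla u^+\nabla u^-\,d\mu,
\]
where the discrete cross term $\int_{\Omega\cup\partial\Omega}\nabla u^+\nabla u^-\,d\mu\ge 0$ is strictly positive whenever $u$ has a sign-changing edge, in contrast to the continuous case where it vanishes. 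Together with the nonlocal Kirchhoff factor this couples the two equations $\partial_s\psi_u=0$, $\partial_t\psi_u=0$. I would show that $\psi_u(s,t)\to-\infty$ as $s+t\to\infty$ (the term $-\tfrac1p\int_\Omega|su^++tu^-|^p\log(su^++tu^-)^2\,d\mu$ dominates the quartic Kirchhoff term because $p>6>4$), that $\psi_u$ is positive near the origin, and that the coupled system has a unique solution $(s_u,t_u)$ in the open quadrant, at which $\psi_u$ attains its global maximum; then $s_uu^++t_uu^-\in\mathcal{M}_\Omega$, and $u\in\mathcal{M}_\Omega$ is equivalent to $(s_u,t_u)=(1,1)$. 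The superlinearity $p>6$ is what makes the requisite monotonicity argument for uniqueness go through despite the two cross couplings.

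Next I would produce the minimizer. A computation on the Nehari identity $(J_\Omega'(u),u^{\pm})=0$, combined with the estimate (\ref{1.2}), gives a uniform lower bound $\|u^{\pm}\|_{\mathcal{H}_0^1}\ge\rho>0$ for $u\in\mathcal{M}_\Omega$, whence $m_\Omega>0$; boundedness of any minimizing sequence $\{u_n\}\subset\mathcal{M}_\Omega$ follows because $J_\Omega(u_n)-\tfrac1p(J_\Omega'(u_n),u_n)=(\tfrac12-\tfrac1p)\|u_n\|_{\mathcal{H}_0^1}^2+(\tfrac14-\tfrac1p)b\big(\int_{\Omega\cup\partial\Omega}|\nabla u_n|^2\,d\mu\big)^2+\tfrac{2}{p^2}\int_\Omega|u_n|^p\,d\mu$ controls $\|u_n\|_{\mathcal{H}_0^1}^2$ for $p>4$. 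By finite-dimensionality I pass to $u_n\to u_0$ strongly; the lower bounds persist, so $u_0^{\pm}\not\equiv0$, hence $u_0\in\mathcal{M}_\Omega$ and $J_\Omega(u_0)=m_\Omega$. To see that $u_0$ is a genuine critical point of $J_\Omega$ and not merely a constrained one, I would argue by contradiction: if $J_\Omega'(u_0)\ne0$, then using the uniqueness of Step~1 (which makes $(1,1)$ the \emph{strict} maximum of $\psi_{u_0}$), a quantitative deformation of $J_\Omega$ away from $u_0^{\pm}$ can be projected back onto $\mathcal{M}_\Omega$ near $u_0$, producing an element with energy below $m_\Omega$, a contradiction.

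Finally, for $m_\Omega>2c_\Omega$ I would apply the one-variable fibering analysis separately to $u_0^+$ and $u_0^-$, obtaining unique $\alpha,\beta>0$ with $\alpha u_0^+,\beta u_0^-\in\mathcal{N}_\Omega$, so that $J_\Omega(\alpha u_0^+)\ge c_\Omega$ and $J_\Omega(\beta u_0^-)\ge c_\Omega$. Since $u_0\in\mathcal{M}_\Omega$ makes $(1,1)$ the global maximum of $\psi_{u_0}$, one has $m_\Omega=J_\Omega(u_0)=\psi_{u_0}(1,1)\ge\psi_{u_0}(\alpha,\beta)=J_\Omega(\alpha u_0^++\beta u_0^-)$. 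The nonlocal Kirchhoff term does not split along $u_0=u_0^++u_0^-$: expanding its quartic produces the strictly positive cross contribution $\tfrac{b}{2}\big(\int_{\Omega\cup\partial\Omega}|\nabla u_0^+|^2\,d\mu\big)\big(\int_{\Omega\cup\partial\Omega}|\nabla u_0^-|^2\,d\mu\big)>0$ (both factors positive since $u_0^{\pm}\not\equiv0$ vanish on $\partial\Omega$, hence are non-constant), together with further nonnegative graph cross terms from $\int_{\Omega\cup\partial\Omega}\nabla u_0^+\nabla u_0^-\,d\mu\ge0$. Hence $J_\Omega(\alpha u_0^++\beta u_0^-)>J_\Omega(\alpha u_0^+)+J_\Omega(\beta u_0^-)\ge 2c_\Omega$, giving the strict gap. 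The main obstacle throughout is the interplay of the nonlocal Kirchhoff term with the discrete cross-gradient $\int_{\Omega\cup\partial\Omega}\nabla u^+\nabla u^-\,d\mu\ge0$: establishing unique positive solvability of the coupled fibering system in Step~1, and converting the Kirchhoff coupling into the strict inequality $m_\Omega>2c_\Omega$, are the only genuinely delicate points, everything else being routine once finite-dimensionality removes the compactness issues.
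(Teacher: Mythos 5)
Your proposal is correct, and its overall architecture is the one the paper uses: the paper proves Theorem \ref{t2} simply by re-running the proof of Theorem \ref{t1} (Lemmas \ref{l3}--\ref{l8}) with $V$ replaced by $\Omega$ --- the same two-parameter fibering map, the same splitting identities with the cross term $K_\Omega(u)\le 0$, the same projection-onto-$\mathcal{M}_\Omega$ lemmas, the same deformation-plus-re-entry argument for criticality, and the same splitting inequality for $m_\Omega>2c_\Omega$. You depart from the paper in two worthwhile ways. First, you observe that $(h_1)$ forces $\Omega$ to be a finite set, so $\mathcal{H}_0^1(\Omega)$ is finite-dimensional and all compactness is free; the paper never makes this explicit and instead implicitly carries along the weak-convergence/Fatou/dominated-convergence machinery of Lemma \ref{l9}, which is genuinely needed only in the $\lambda$-problem on $\mathbb{Z}^3$. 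Your route lets you pass to a strongly convergent minimizing sequence and take limits directly in the constraint $(J_\Omega'(u_n),u_n^{\pm})=0$, avoiding the one-sided Fatou argument and the subsequent projection back with $s,t\in(0,1]$. Second, for existence of the pair $(s_u,t_u)$ you replace the paper's Miranda-theorem argument on the square $[r,R]^2$ by locating an interior global maximum of $\psi_u$ (coercivity from $p>6$, positivity near the axes since the coupling terms are nonnegative); both are valid, though note that for your criticality step (``projecting the deformation back onto $\mathcal{M}_\Omega$'') you will in effect still need a Miranda/degree-type intersection argument as in Lemma \ref{l8}, so you do not fully dispense with it. One small point in your favor: your strict gap $m_\Omega>2c_\Omega$ rests only on the Kirchhoff product term $\frac{b}{2}\|\nabla u_0^+\|_{\ell^2(\Omega\cup\partial\Omega)}^{2}\|\nabla u_0^-\|_{\ell^2(\Omega\cup\partial\Omega)}^{2}>0$, which is robust, whereas the paper repeatedly asserts the strict sign $K_V(u)<0$ on the sign-changing Nehari set --- strictness that can fail if $u^+$ and $u^-$ are separated by a zero set --- so your version of that step is actually the more careful one. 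The only thin spot in your sketch is the uniqueness of $(s_u,t_u)$, which you attribute to an unspecified monotonicity; the paper derives it cleanly from the strict maximality inequality of Lemma \ref{l3} applied twice, and you would be well advised to do the same rather than attempt a direct monotonicity analysis of the coupled system.
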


\begin{thm}\label{t3}
Let $p>6$ and $(h_1)$-$(h_2)$ hold. Then for any sequence $\lambda_{k} \rightarrow\infty$, up to a subsequence, the corresponding least energy sign-changing solution $u_{\lambda_{k}}$ of the equation (\ref{0.2}) converges in $H^1(V)$ to a least energy sign-changing solution of the equation (\ref{1.8}).    
\end{thm}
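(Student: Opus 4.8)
The plan is to run a deepening-potential argument, using $(h_2)$ to force the mass of $u_{\lambda_k}$ out of the region $\{h>0\}$ as $\lambda_k\to\infty$. First I would record a uniform energy bound. Extending the sign-changing minimizer of $J_\Omega$ from Theorem \ref{t2} by zero to all of $\mathbb{Z}^3$ produces an element of $\mathcal{M}_\lambda$ with the same energy, since $h$ vanishes on $\Omega$ and the extension is supported there; hence $m_\lambda\le m_\Omega$ for every $\lambda$. On $\mathcal{M}_{\lambda_k}$ one has the identity
\[
J_{\lambda_k}(u)-\tfrac1p(J_{\lambda_k}'(u),u)=\left(\tfrac12-\tfrac1p\right)\|u\|_{\mathcal{H}_{\lambda_k}}^2+b\left(\tfrac14-\tfrac1p\right)\|\nabla u\|_{\ell^2}^4+\tfrac{2}{p^2}\int_{\mathbb{Z}^3}|u|^p\,d\mu,
\]
whose coefficients are all positive because $p>6$; since $(J_{\lambda_k}'(u_{\lambda_k}),u_{\lambda_k})=0$, this gives $m_{\lambda_k}=J_{\lambda_k}(u_{\lambda_k})\le m_\Omega$ and thus a uniform bound on $\|u_{\lambda_k}\|_{\mathcal{H}_{\lambda_k}}$, hence on $\|u_{\lambda_k}\|_{H^1}$ as $h_\lambda\ge1$. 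Passing to a subsequence, $u_{\lambda_k}\rightharpoonup u_0$ in $H^1(\mathbb{Z}^3)$ and pointwise.

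Next comes the concentration. From $\int_{\mathbb{Z}^3}\lambda_k h\,u_{\lambda_k}^2\,d\mu\le\|u_{\lambda_k}\|_{\mathcal{H}_{\lambda_k}}^2\le C$ and lower semicontinuity I get $\int_{\mathbb{Z}^3} h\,u_0^2\,d\mu=0$, so $u_0$ is supported in $\Omega$ and $u_0\in\mathcal{H}_0^1(\Omega)$. For strong convergence I split $\mathbb{Z}^3$ via $(h_2)$: on the finite set $A=\{h<M\}$ pointwise convergence yields $\ell^2$-convergence, while on $A^c$ one has $\sum_{A^c}u_{\lambda_k}^2\le(\lambda_kM)^{-1}\|u_{\lambda_k}\|_{\mathcal{H}_{\lambda_k}}^2\to0$; since $u_0$ is supported in $\Omega\subseteq A$, this gives $u_{\lambda_k}\to u_0$ in $\ell^2(\mathbb{Z}^3)$, hence in $\ell^p$ for every $p\ge2$ by the discrete embedding $\|v\|_{\ell^p}\le\|v\|_{\ell^2}$. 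Using (\ref{1.2}) I then pass to the limit in the logarithmic term to obtain $\int_{\mathbb{Z}^3}|u_{\lambda_k}|^p\log u_{\lambda_k}^2\,d\mu\to\int_{\mathbb{Z}^3}|u_0|^p\log u_0^2\,d\mu$.

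The main obstacle is the nonlocal Kirchhoff coefficient, which I expect to create an apparent circularity. Write $A_k=\|\nabla u_{\lambda_k}\|_{\ell^2}^2\to A$ along a subsequence and $B=\lim\int_{\mathbb{Z}^3}\lambda_k h\,u_{\lambda_k}^2\,d\mu\ge0$. Testing $J_{\lambda_k}'(u_{\lambda_k})=0$ against $\phi\in C_c(\Omega)$ and letting $k\to\infty$ (each term has finite support or converges by the above) shows that $u_0$ solves $-(a+bA)\Delta u_0+u_0=|u_0|^{p-2}u_0\log u_0^2$ on $\Omega$ with $u_0=0$ on $\partial\Omega$; testing this limiting equation against $u_0$ gives $(a+bA)\|\nabla u_0\|_{\ell^2}^2+\|u_0\|_{\ell^2}^2=\int_{\Omega}|u_0|^p\log u_0^2\,d\mu$. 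On the other hand $(J_{\lambda_k}'(u_{\lambda_k}),u_{\lambda_k})=0$ reads $\|u_{\lambda_k}\|_{\mathcal{H}_{\lambda_k}}^2+bA_k^2=\int_{\mathbb{Z}^3}|u_{\lambda_k}|^p\log u_{\lambda_k}^2\,d\mu$, which in the limit (using $\int u_{\lambda_k}^2\to\|u_0\|_{\ell^2}^2$) becomes $aA+B+\|u_0\|_{\ell^2}^2+bA^2=\int_{\Omega}|u_0|^p\log u_0^2\,d\mu$. Subtracting the two identities and writing $A'=\|\nabla u_0\|_{\ell^2}^2$ yields $(a+bA)(A-A')+B=0$. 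Since weak lower semicontinuity forces $A\ge A'$ and $B\ge0$, both nonnegative terms must vanish: $A=\|\nabla u_0\|_{\ell^2}^2$ and $B=0$. Hence $u_0$ is a genuine weak solution of (\ref{1.8}), the gradients converge in norm, and together with the $\ell^2$-convergence this gives $u_{\lambda_k}\to u_0$ strongly in $H^1(\mathbb{Z}^3)$.

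It remains to establish the sign-changing property and minimality. Testing $(J_{\lambda_k}'(u_{\lambda_k}),u_{\lambda_k}^\pm)=0$, using the edgewise inequality $\int\nabla u_{\lambda_k}\nabla u_{\lambda_k}^\pm\,d\mu\ge\int|\nabla u_{\lambda_k}^\pm|^2\,d\mu\ge0$, (\ref{1.2}), and $\ell^2\hookrightarrow\ell^q$, I obtain $\|u_{\lambda_k}^\pm\|_{\mathcal{H}_{\lambda_k}}^2\le\varepsilon\|u_{\lambda_k}^\pm\|_{\ell^2}^2+C_\varepsilon\|u_{\lambda_k}^\pm\|_{\ell^q}^q$, which both furnishes a uniform lower bound $\|u_{\lambda_k}^\pm\|_{\mathcal{H}_{\lambda_k}}\ge\delta>0$ and shows that $u_0^\pm=0$ would make the right side tend to $0$, a contradiction. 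Thus $u_0^\pm\neq0$ and $u_0\in\mathcal{M}_\Omega$, so $J_\Omega(u_0)\ge m_\Omega$. Since every term of the energy converges, $m_{\lambda_k}=J_{\lambda_k}(u_{\lambda_k})\to J_\Omega(u_0)$, and combined with $m_{\lambda_k}\le m_\Omega$ this squeezes $J_\Omega(u_0)=m_\Omega$ and $m_{\lambda_k}\to m_\Omega$. Therefore $u_0$ is a least energy sign-changing solution of (\ref{1.8}) and $u_{\lambda_k}\to u_0$ in $H^1(\mathbb{Z}^3)$, as claimed.
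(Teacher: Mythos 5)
Your proposal is correct, and at the decisive step it takes a genuinely different route from the paper. The paper first proves $m_\lambda\to m_\Omega$ (Lemma \ref{s3}) and then establishes the key claim (\ref{0.7}) by contradiction: if either $\lambda_k\int_V h|u_k^{\pm}|^2\,d\mu\not\to 0$ or the gradients of $u_k^{\pm}$ fail to converge in norm, a strict inequality gives $(J_\Omega'(u_0),u_0^{\pm})<0$, and then the projection machinery (Lemmas \ref{l7} and \ref{l6}, built on Miranda's theorem) produces $s,t\in(0,1)$ with $su_0^++tu_0^-\in\mathcal{M}_\Omega$ and a strictly smaller energy bound, contradicting $m_\Omega$. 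You instead exploit the scalar structure of the Kirchhoff nonlocality: freezing the coefficient at $a+bA$ with $A=\lim\|\nabla u_k\|_2^2$, you identify the equation satisfied by the weak limit $u_0$ on the finite set $\Omega$, test it against $u_0$, and subtract from the limit of the Nehari identity to obtain the exact relation $(a+bA)(A-A')+B=0$ with both terms nonnegative, which simultaneously forces norm convergence of gradients, $B=0$, and that $u_0$ solves (\ref{1.8}); the convergence $m_{\lambda_k}\to m_\Omega$ then falls out as a by-product rather than being an input. Your route is leaner — it needs neither Lemma \ref{s3} in advance nor the projection lemmas at this stage — but it is tied to the fact that the nonlocal term depends only on the single scalar $\|\nabla u\|_2^2$ (so the limit problem is a frozen-coefficient equation), whereas the paper's steep-well contradiction argument handles $u_k^+$ and $u_k^-$ separately, reuses machinery already developed for Theorems \ref{t1}--\ref{t2}, and is more robust when no such limiting characterization is available. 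The supporting steps in your write-up match the paper's: the bound $m_\lambda\le m_\Omega$ via zero-extension (as in Lemma \ref{s2}), the uniform $\mathcal{H}_{\lambda_k}$-bound from the identity $J-\frac1p(J',\cdot)$ with $p>6$, the splitting via $(h_2)$ for strong $\ell^2$-convergence (as in Lemma \ref{l2}), the sign inequality $\int_V\nabla u\,\nabla u^{\pm}\,d\mu\ge\int_V|\nabla u^{\pm}|^2\,d\mu$ (equivalently $K_V(u)\le 0$), and the uniform lower bound on $\|u_k^{\pm}\|_2$ (as in Lemma \ref{s1}) giving $u_0^{\pm}\neq 0$; your passage to the limit in $\int_V|u_k|^p\log u_k^2\,d\mu$ is justified by (\ref{1.2}) together with strong $\ell^2\cap\ell^\infty$ convergence, where the paper uses a Fatou-type splitting into positive and negative parts to the same effect.
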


This paper is organized as follows. In Section 2, we present some preliminary results on graphs. In Section 3, we prove the existence of least energy sign-changing solutions to the equation (\ref{0.2}) and the equation (\ref{1.8})(Theorem \ref{t1} and Theorem \ref{t2}). In Section 4, we verify the asymptotic behavior of least energy sign-changing solutions to the equation (\ref{0.2})(Theorem \ref{t3}).

\section{Preliminaries}
In this section, we introduce the basic settings on graphs and give some preliminary results. 

Let $G=(V, E)$ be a connected, locally finite graph, where $V$ denotes the vertex set and $E$ denotes the edge set. We call vertices $x$ and $y$ neighbors, denoted by $x \sim y$, if there exists an edge connecting them, i.e. $(x, y) \in E$. For any $x,y\in V$, the distance $d(x,y)$ is defined as the minimum number of edges connecting $x$ and $y$, namely
$$d(x,y)=\inf\{k:x=x_0\sim\cdots\sim x_k=y\}.$$
Let $B_{r}(a)=\{x\in V: d(x,a)\leq r\}$ be the closed ball of radius $r$ centered at $a\in V$. For brevity, we write $B_{r}:=B_{r}(0)$.

In this paper, we consider, the natural discrete model of the Euclidean space, the integer lattice graph.  The $3$-dimensional integer lattice graph, denoted by $\mathbb{Z}^3$, consists of the set of vertices $V=\mathbb{Z}^3$ and the set of edges $E=\{(x,y): x,\,y\in~V,\,\underset {{i=1}}{\overset{3}{\sum}}|x_{i}-y_{i}|=1\}.$
In the sequel, we denote $|x-y|:=d(x,y)$ on the lattice graph $~V$.

For $u,v \in C(V)$, we define the Laplacian of $u$ as
$$
\Delta u(x)=\sum\limits_{y \sim x}(u(y)-u(x)),
$$
 and the gradient form $\Gamma$ as
$$
\Gamma(u, v)(x)=\frac{1}{2} \sum\limits_{y \sim x}(u(y)-u(x))(v(y)-v(x))=:\nabla u\nabla v.
$$
We write $\Gamma(u)=\Gamma(u, u)$ and denote the length of the gradient as
$$
|\nabla u|(x)=\sqrt{\Gamma(u)(x)}=\left(\frac{1}{2} \sum\limits_{y \sim x}(u(y)-u(x))^{2}\right)^{\frac{1}{2}}.
$$

We denote by $C(\Omega)$ the set of functions on $\Omega\subset V$. The $\ell^p(\Omega)$ space is defined as
$$\ell^p(\Omega)=\{u\in C(\Omega):\|u\|_{\ell^p(\Omega)}<\infty\},$$
where
 $$
\|u\|_{\ell^p(\Omega)}= \begin{cases}\left(\sum\limits_{x \in \Omega}|u(x)|^{p}\right)^{\frac{1}{p}}, &  1 \leq p<\infty, \\ \sup\limits_{x \in \Omega}|u(x)|, & p=\infty.\end{cases}
$$
We shall write $\|u\|_p$ instead of $\|u\|_{\ell^p(V)}$ if $\Omega=V$.

\begin{prop}\label{o} Let $s,t>0$. Then for any $u\in\mathcal{H}_\lambda$, we have
  \begin{itemize}
      \item[(i)] $$
      \int_{V} \left|\nabla(su^{+}+tu^{-})\right|^2 d \mu=\int_{V} \left|\nabla (su^{+})\right|^2 d \mu+\int_{V}\left|\nabla (tu^{-})\right|^2 d \mu-stK_{V}(u),
      $$
 where $K_V(u)=\sum\limits_{x \in V} \sum\limits_{y \sim x}\left[u^{+}(x) u^{-}(y)+u^{-}(x) u^{+}(y)\right]\leq 0.$    
 \item[(ii)] $$
 \int_{V} \nabla\left(su^{+}+tu^{-}\right)\nabla (su^+)\, d \mu=\int_{V} \left|\nabla(s u^{+})\right|^2 d \mu-\frac{st}{2} K_{V}(u).
 $$
 \item[(iii)] $$
 \int_{V} \nabla\left(su^{+}+tu^{-}\right)\nabla (tu^-)\, d \mu=\int_{V} \left|\nabla (tu^{-})\right|^2 d \mu-\frac{st}{2} K_{V}(u).
$$
  \end{itemize}  
\end{prop}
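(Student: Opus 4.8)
The plan is to prove all three identities by a single direct expansion of the gradient form, exploiting the pointwise orthogonality of the positive and negative parts. Since $u\in\mathcal{H}_\lambda\subset H^1(\mathbb{Z}^3)$, the gradient lies in the appropriate $\ell^2$ space, so every double sum over edges below converges absolutely and may be freely rearranged.

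First I would set $w=su^++tu^-$ and record that, for any edge $x\sim y$,
$$w(y)-w(x)=s\bigl(u^+(y)-u^+(x)\bigr)+t\bigl(u^-(y)-u^-(x)\bigr).$$
Squaring and summing over all edges produces three groups of terms: the $s^2$ and $t^2$ terms reproduce $\int_V|\nabla(su^+)|^2\,d\mu$ and $\int_V|\nabla(tu^-)|^2\,d\mu$, while the cross term is
$$st\sum_{x\in V}\sum_{y\sim x}\bigl(u^+(y)-u^+(x)\bigr)\bigl(u^-(y)-u^-(x)\bigr).$$

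The crucial observation is that at every vertex $z$ one has $u^+(z)u^-(z)=0$, because $u(z)$ cannot be simultaneously positive and negative. Hence, expanding the product inside the cross term, the diagonal contributions $u^+(y)u^-(y)$ and $u^+(x)u^-(x)$ vanish, leaving only $-u^+(y)u^-(x)-u^+(x)u^-(y)$. Summing over all edges and using the symmetry of the edge sum under exchanging $x$ and $y$, this collapses to exactly $-K_V(u)$; combining with the factor $\tfrac12$ in the definition of the gradient form yields (i). The sign claim $K_V(u)\le 0$ is then immediate, since $u^+\ge 0\ge u^-$ pointwise forces each summand in $K_V(u)$ to be nonpositive.

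For (ii) and (iii) I would run the same expansion but contract $w(y)-w(x)$ against $s\bigl(u^+(y)-u^+(x)\bigr)$ and against $t\bigl(u^-(y)-u^-(x)\bigr)$ respectively, rather than against itself. In each case the pure term gives $\int_V|\nabla(su^+)|^2\,d\mu$ (resp.\ $\int_V|\nabla(tu^-)|^2\,d\mu$), and the single remaining cross term contributes $-\tfrac{st}{2}K_V(u)$ by the identical cancellation and symmetrization. There is no genuine obstacle here; the only point demanding care is the bookkeeping of the double edge sum and the symmetrization step that identifies the two cross products with the single quantity $K_V(u)$.
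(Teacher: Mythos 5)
Your proof is correct and follows essentially the same route as the paper: expand the edge sum for $\left|\nabla(su^{+}+tu^{-})\right|^2$ (resp.\ the gradient form in (ii)--(iii)), use the pointwise orthogonality $u^{+}(z)u^{-}(z)=0$ to reduce the cross term to $-stK_V(u)$ (resp.\ $-\tfrac{st}{2}K_V(u)$), and read off the sign of $K_V(u)$ from $u^{+}\geq 0\geq u^{-}$. Your added remark on absolute convergence of the double edge sum is a harmless refinement the paper leaves implicit.
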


\begin{proof}
(i) A direct calculation yields that
$$
\begin{aligned}
& \int_{V} \left|\nabla(su^{+}+tu^{-})\right|^2 d \mu \\
= & \frac{1}{2} \sum_{x \in V} \sum_{y \sim x}\left[\left(su^{+}+tu^{-}\right)(y)-\left(su^{+}+tu^{-}\right)(x)\right]^{2}\\
= & \frac{1}{2} \sum_{x \in V} \sum_{y \sim x}\left[\left(su^{+}(y)-su^{+}(x)\right)^{2}+\left(tu^{-}(y)-tu^{-}(x)\right)^{2}-2st\left[u^{+}(x) u^{-}(y)+u^{-}(x) u^{+}(y)\right]\right] \\
= & \int_{V} \left|\nabla (su^{+})\right|^2 d \mu+\int_{V}\left|\nabla (tu^{-})\right|^2 d \mu-stK_{V}(u).
\end{aligned}
$$

(ii) By direct computation, we get that
$$
\begin{aligned}
&\int_{V} \nabla\left(su^{+}+tu^{-}\right)\nabla (su^+)\, d \mu \\= & \frac{1}{2} \sum_{x \in V} \sum_{y \sim x}\left[\left(su^{+}+tu^{-}\right)(y)-\left(su^{+}+tu^{-}\right)(x)\right]\left[su^{+}(y)-su^{+}(x)\right] \\
= & \frac{1}{2} \sum_{x \in V} \sum_{y \sim x}\left[\left(su^{+}(y)-su^+(x)\right)^{2}-st\left[u^{+}(x) u^{-}(y)+u^{-}(x) u^{+}(y)\right]\right] \\
= & \int_{V} \left|\nabla(s u^{+})\right|^2 d \mu-\frac{st}{2} K_{V}(u) .
\end{aligned}
$$

(iii) The proof is similar to that of (ii), we omit here.

\end{proof}

\begin{crl}  Let $s,t>0$. Then for any $u \in \mathcal{H}_\lambda$, we have
\begin{itemize}
    \item[(i)] \begin{equation}\label{7.1}
\begin{aligned}
J_{\lambda}(su^++tu^-)=&J_{\lambda}\left(su^{+}\right)+J_{\lambda}\left(tu^{-}\right)-\frac{a}{2}stK_{V}(u)+\frac{b}{4}s^2t^2K^2_V(u)+\frac{b}{2}\left\|\nabla (su^{+})\right\|_{2}^{2}\left\|\nabla (tu^{-})\right\|_{2}^{2}\\ &-\frac{b}{2}stK_V(u)\left(\left\|\nabla (su^{+})\right\|_{2}^{2}+\left\|\nabla (tu^{-})\right\|_{2}^{2}\right).\\
\end{aligned}
\end{equation}

\item[(ii)] \begin{equation}\label{2.5}
\begin{aligned}
(J_{\lambda}^{\prime}(su^++tu^-),su^{+}) =&(J_{\lambda}^{\prime}\left(su^{+}\right),su^{+})-\frac{a}{2}stK_{V}(u)+\frac{b}{2}s^2t^2K^2_V(u)+b\left\|\nabla (su^{+})\right\|_{2}^{2}\left\|\nabla (tu^{-})\right\|_{2}^{2}\\&-\frac{b}{2}stK_V(u)\left(\left\|\nabla (su^{+})\right\|_{2}^{2}+\left\|\nabla (tu^{-})\right\|_{2}^{2}\right)
-bstK_V(u)\|\nabla (su^{+})\|^2_2.
\end{aligned}
\end{equation}

\item[(iii)] \begin{equation}\label{2.6}
\begin{aligned}
(J_{\lambda}^{\prime}(su^++tu^-),tu^{-}) &=(J_{\lambda}^{\prime}\left(tu^{-}\right),tu^{-})-\frac{a}{2}stK_{V}(u)+\frac{b}{2}s^2t^2K^2_V(u)+b\left\|\nabla (su^{+})\right\|_{2}^{2}\left\|\nabla (tu^{-})\right\|_{2}^{2}\\&-\frac{b}{2}stK_V(u)\left(\left\|\nabla (su^{+})\right\|_{2}^{2}+\left\|\nabla (tu^{-})\right\|_{2}^{2}\right)
-bstK_V(u)\|\nabla (tu^{-})\|^2_2.
\end{aligned}
\end{equation}
\end{itemize}
\end{crl}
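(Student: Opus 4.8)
The plan is to establish all three identities by a direct, term-by-term expansion of $J_\lambda$ (respectively of the pairings $(J_\lambda'(\cdot),\cdot)$) evaluated at $w:=su^++tu^-$, and then to match against the corresponding expressions at $su^+$ and $tu^-$ separately. The structural fact that drives everything is that $u^+$ and $u^-$ have disjoint supports, so $u^+(x)u^-(x)=0$ for every $x\in V$, while $w$ agrees with $su^+$ on $\operatorname{supp}(u^+)$ and with $tu^-$ on $\operatorname{supp}(u^-)$. Consequently the only terms that fail to split additively are the gradient terms, and it is precisely there that the quantity $K_V(u)$ is produced; this is where Proposition \ref{o} is used.

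First I would dispose of the local (pointwise) terms. By disjoint supports the potential term splits,
$$\tfrac12\int_V h_\lambda w^2\,d\mu=\tfrac12\int_V h_\lambda(su^+)^2\,d\mu+\tfrac12\int_V h_\lambda(tu^-)^2\,d\mu,$$
and the same additive splitting holds for $\int_V|w|^p\,d\mu$ and $\int_V|w|^p\log w^2\,d\mu$, since $|w|=su^+$ on $\operatorname{supp}(u^+)$ and $|w|=t|u^-|$ on $\operatorname{supp}(u^-)$. Hence every local contribution to $J_\lambda(w)$ assembles exactly into the local part of $J_\lambda(su^+)+J_\lambda(tu^-)$, with no leftover. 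For the derivative identities \eqref{2.5} and \eqref{2.6}, pairing $J_\lambda'(w)$ with $su^+$ removes the contribution from $\operatorname{supp}(u^-)$ (and symmetrically for $tu^-$): on $\operatorname{supp}(u^+)$ one has $w=su^+$, so the local part of $(J_\lambda'(w),su^+)$ reduces verbatim to the local part of $(J_\lambda'(su^+),su^+)$, and likewise for $tu^-$.

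The remaining work is the algebra of the gradient terms. Writing $A:=\|\nabla(su^+)\|_2^2$, $B:=\|\nabla(tu^-)\|_2^2$ and $C:=stK_V(u)$, Proposition \ref{o}(i) gives $\int_V|\nabla w|^2\,d\mu=A+B-C$, while Proposition \ref{o}(ii)--(iii) give $\int_V\nabla w\,\nabla(su^+)\,d\mu=A-\tfrac12C$ and $\int_V\nabla w\,\nabla(tu^-)\,d\mu=B-\tfrac12C$. For \eqref{7.1} the quadratic gradient term contributes $\tfrac{a}{2}(A+B-C)$, supplying the $-\tfrac{a}{2}stK_V(u)$ summand, while squaring in the Kirchhoff term and subtracting the diagonal pieces $\tfrac{b}{4}A^2,\tfrac{b}{4}B^2$ already present in $J_\lambda(su^+)+J_\lambda(tu^-)$ leaves
$$\tfrac{b}{4}(A+B-C)^2-\tfrac{b}{4}A^2-\tfrac{b}{4}B^2=\tfrac{b}{4}C^2+\tfrac{b}{2}AB-\tfrac{b}{2}C(A+B),$$
which is exactly the Kirchhoff remainder in \eqref{7.1}. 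For \eqref{2.5} I would instead expand the nonlocal factor $b(A+B-C)(A-\tfrac12C)$ and subtract the diagonal term $bA^2$ contained in $(J_\lambda'(su^+),su^+)$; together with the linear $a$-contribution $a(A-\tfrac12C)-aA=-\tfrac{a}{2}C$ this reproduces all the $K_V(u)$- and $K_V^2(u)$-terms, including the asymmetric summand $-bstK_V(u)\|\nabla(su^+)\|_2^2$. Part \eqref{2.6} is identical after interchanging the roles of $su^+$ and $tu^-$ and invoking Proposition \ref{o}(iii).

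The only genuinely delicate point is the bookkeeping in the Kirchhoff product, since that is the sole source of the quadratic-in-$K_V$ term and of the asymmetric mixed term; every coefficient there must be tracked carefully. Once the three factorizations from Proposition \ref{o} are substituted and the product $(A+B-C)(A-\tfrac12C)$ (resp. the square $(A+B-C)^2$) is multiplied out, each stated remainder falls out directly, so beyond this careful expansion no real obstacle remains.
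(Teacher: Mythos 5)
Your proposal is correct and takes essentially the same approach as the paper: a direct expansion of $J_\lambda(su^++tu^-)$ and $(J_\lambda'(su^++tu^-),su^{\pm})$, using the disjoint supports of $u^{\pm}$ to split all local (potential, $|u|^p$ and logarithmic) terms and Proposition \ref{o}(i)--(iii) to produce the $K_V(u)$-corrections in the gradient and Kirchhoff terms. Your remainder computations $\tfrac{b}{4}(A+B-C)^2-\tfrac{b}{4}A^2-\tfrac{b}{4}B^2$ and $b(A+B-C)\left(A-\tfrac12 C\right)-bA^2$ reproduce the paper's coefficients exactly (the paper writes out only \eqref{2.5} and notes the other two cases are analogous, just as you do for \eqref{2.6}).
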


\begin{proof}
Since the proof of our results is similar, we just prove the equality (\ref{2.5}).
By (i) and (ii) of Proposition \ref{o}, we obtain that
$$
\begin{aligned}
&(J_{\lambda}^{\prime}(su^++tu^-),su^{+})\\=&
a\int_{V} \nabla (su^++tu^-)\nabla (su^{+})\,d\mu+\int_{V}h_\lambda(x)(su^++tu^-)(su^{+})\,d \mu\\&+b \int_{V}|\nabla(su^++tu^-)|^{2}\,d \mu \int_{V} \nabla (su^++tu^-) \nabla (su^+)\,d \mu\\&-\int_{V}|(su^++tu^-)|^{p-2}(su^++tu^-)(su^+)\log (su^++tu^-)^2\,d\mu\\=& a\left(\int_{V} \left|\nabla(s u^{+})\right|^2 d \mu-\frac{st}{2} K_{V}(u)\right)+\int_{V}h_\lambda(x)(su^+)^2\,d \mu\\&+ b\left(\int_{V} \left|\nabla (su^{+})\right|^2 d \mu+\int_{V}\left|\nabla (tu^{-})\right|^2 d \mu-stK_{V}(u)\right)\left(\int_{V} \left|\nabla(s u^{+})\right|^2 d \mu-\frac{st}{2} K_{V}(u)\right)\\&-\int_{V}|su^+|^{p}\log (su^+)^2\,d\mu \\=& \int_{V}  a|\nabla(s u^{+})|^2 +h_\lambda(x)(su^+)^2\,d \mu  +b\left(\int_{V} \left|\nabla (su^{+})\right|^2 d \mu\right)^2-\int_{V}|su^+|^{p}\log (su^+)^2\,d\mu\\&-\frac{a}{2}stK_{V}(u)+\frac{b}{2}s^2t^2K^2_V(u)+b\left\|\nabla (su^{+})\right\|_{2}^{2}\left\|\nabla (tu^{-})\right\|_{2}^{2}\\&-\frac{b}{2}stK_V(u)\left(\left\|\nabla (su^{+})\right\|_{2}^{2}+\left\|\nabla (tu^{-})\right\|_{2}^{2}\right)
-bstK_V(u)\|\nabla (su^{+})\|^2_2\\=&(J_{\lambda}^{\prime}\left(su^{+}\right),su^{+})-\frac{a}{2}stK_{V}(u)+\frac{b}{2}s^2t^2K^2_V(u)+b\left\|\nabla (su^{+})\right\|_{2}^{2}\left\|\nabla (tu^{-})\right\|_{2}^{2}\\&-\frac{b}{2}stK_V(u)\left(\left\|\nabla (su^{+})\right\|_{2}^{2}+\left\|\nabla (tu^{-})\right\|_{2}^{2}\right)
-bstK_V(u)\|\nabla (su^{+})\|^2_2.
\end{aligned}
$$
\end{proof}

\begin{rem}
 Let $s=t=1$ in (\ref{7.1}), (\ref{2.5}) and (\ref{2.6}). Then for any $u\in\mathcal{H}_\lambda\backslash\{0\}$, we have
 \begin{itemize}
 \item[(i)] The following results highlight a notable distinction
between the discrete and continuous cases, 
$$J_\lambda(u)\neq J_\lambda(u^+)+J_\lambda(u^-),\qquad (J'_\lambda(u),u)\neq (J'_\lambda(u^+),u^+)+(J'_\lambda(u^-),u^-).$$

\item[(ii)]  A direct calculation yields that
\begin{equation}\label{0.3}
(J'_\lambda(u),u)=(J'_\lambda(u),u^+)+(J'_\lambda(u),u^-).
\end{equation}
\end{itemize}

\end{rem}

By similar arguments as above, for any $u \in \mathcal{H}_{0}^{1}(\Omega)$, we have similar results.
\begin{crl} Let $s,t>0$. Then for any $u \in \mathcal{H}_{0}^{1}(\Omega)$, we have
    \begin{itemize}
        \item[(i)]$$
\begin{aligned}
J_{\Omega}(su^++tu^-)=&J_{\Omega}\left(su^{+}\right)+J_{\Omega}\left(tu^{-}\right)-\frac{a}{2}stK_{\Omega}(u)+\frac{b}{4}s^2t^2K^2_\Omega(u)\\&+\frac{b}{2}\left\|\nabla (su^{+})\right\|_{\ell^2(\Omega\cup\partial \Omega)}^{2}\left\|\nabla (tu^{-})\right\|_{\ell^2(\Omega\cup\partial \Omega)}^{2}\\ &-\frac{b}{2}stK_\Omega(u)\left(\left\|\nabla (su^{+})\right\|_{\ell^2(\Omega\cup\partial \Omega)}^{2}+\left\|\nabla (tu^{-})\right\|_{\ell^2(\Omega\cup\partial \Omega)}^{2}\right),\\
\end{aligned}
$$
where $K_{\Omega}(u):=\sum\limits_{x \in \Omega \cup \partial\Omega} \sum\limits_{y \sim x} \left[u^{+}(x) u^{-}(y)+u^{-}(x) u^{+}(y)\right]\leq 0$.
\item[(ii)] $$
\begin{aligned}
(J_{\Omega}^{\prime}(su^++tu^-),su^{+}) =&(J_{\Omega}^{\prime}\left(su^{+}\right),su^{+})-\frac{a}{2}stK_{\Omega}(u)+\frac{b}{2}s^2t^2K^2_\Omega(u)\\&+b\left\|\nabla (su^{+})\right\|_{\ell^2(\Omega\cup\partial \Omega)}^{2}\left\|\nabla (tu^{-})\right\|_{\ell^2(\Omega\cup\partial \Omega)}^{2}\\&-\frac{b}{2}stK_\Omega(u)\left(\left\|\nabla (su^{+})\right\|_{\ell^2(\Omega\cup\partial \Omega)}^{2}+\left\|\nabla (tu^{-})\right\|_{\ell^2(\Omega\cup\partial \Omega)}^{2}\right)
\\&-bstK_\Omega(u)\|\nabla (su^{+})\|^2_{\ell^2(\Omega\cup\partial \Omega)}.
\end{aligned}
$$
\item[(iii)] $$
\begin{aligned}
(J_{\Omega}^{\prime}(su^++tu^-),tu^{-}) =&(J_{\Omega}^{\prime}\left(tu^{-}\right),tu^{-})-\frac{a}{2}stK_{\Omega}(u)+\frac{b}{2}s^2t^2K^2_\Omega(u)\\&+b\left\|\nabla (su^{+})\right\|_{\ell^2(\Omega\cup\partial \Omega)}^{2}\left\|\nabla (tu^{-})\right\|_{\ell^2(\Omega\cup\partial \Omega)}^{2}\\&-\frac{b}{2}stK_\Omega(u)\left(\left\|\nabla (su^{+})\right\|_{\ell^2(\Omega\cup\partial \Omega)}^{2}+\left\|\nabla (tu^{-})\right\|_{\ell^2(\Omega\cup\partial \Omega)}^{2}\right)
\\&-bstK_\Omega(u)\|\nabla (tu^{-})\|^2_{\ell^2(\Omega\cup\partial \Omega)}.
\end{aligned}
$$
    \end{itemize}
\end{crl}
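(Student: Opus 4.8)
The plan is to mirror exactly the argument used for $\mathcal{H}_\lambda$ in the preceding corollary, the only change being that every gradient integral is now taken over $\Omega\cup\partial\Omega$ while the zero-order and nonlinear integrals stay over $\Omega$. First I would record the $\Omega$-analogue of Proposition \ref{o}: for $s,t>0$ and $u\in\mathcal{H}_0^1(\Omega)$,
\begin{align*}
\int_{\Omega\cup\partial\Omega}|\nabla(su^++tu^-)|^2\,d\mu &= \int_{\Omega\cup\partial\Omega}|\nabla(su^+)|^2\,d\mu+\int_{\Omega\cup\partial\Omega}|\nabla(tu^-)|^2\,d\mu-stK_\Omega(u),\\
\int_{\Omega\cup\partial\Omega}\nabla(su^++tu^-)\nabla(su^+)\,d\mu &= \int_{\Omega\cup\partial\Omega}|\nabla(su^+)|^2\,d\mu-\tfrac{st}{2}K_\Omega(u),
\end{align*}
together with the symmetric identity for the $tu^-$ pairing. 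These follow from the same direct expansion as in Proposition \ref{o}: squaring $(su^++tu^-)(y)-(su^++tu^-)(x)$ and using $u^+u^-\equiv0$ pointwise kills the diagonal cross terms, leaving precisely $-st[u^+(x)u^-(y)+u^-(x)u^+(y)]$ on each edge, whose sum over $x\in\Omega\cup\partial\Omega$ is $K_\Omega(u)$. That $u$ vanishes on $\partial\Omega$ for $u\in\mathcal{H}_0^1(\Omega)$ plays no special role; one simply sums over the enlarged vertex set, and $K_\Omega(u)\le0$ is immediate from $u^+\ge0$, $u^-\le0$.

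Then I would substitute these identities into the definitions of $J_\Omega$ and $J'_\Omega$ and, as before, verify (ii) in full detail, the formulas (i) and (iii) being entirely analogous. The crucial observation is that the supports of $u^+$ and $u^-$ are disjoint, so the quadratic potential term $\int_\Omega(su^++tu^-)(su^+)\,d\mu$ and the nonlinear term $\int_\Omega|su^++tu^-|^{p-2}(su^++tu^-)(su^+)\log(su^++tu^-)^2\,d\mu$ contain no cross terms: the first collapses to $\int_\Omega(su^+)^2\,d\mu$ and the second to $\int_\Omega|su^+|^p\log(su^+)^2\,d\mu$, since $su^+$ vanishes on $\operatorname{supp}(u^-)$. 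Hence every term mixing $u^+$ and $u^-$ must originate from the Kirchhoff factor. Expanding the product $b\int_{\Omega\cup\partial\Omega}|\nabla(su^++tu^-)|^2\,d\mu\cdot\int_{\Omega\cup\partial\Omega}\nabla(su^++tu^-)\nabla(su^+)\,d\mu$ via the two gradient identities above and collecting terms reproduces exactly the $K_\Omega$-dependent expression in (ii), with $\|\cdot\|_2$ replaced throughout by $\|\cdot\|_{\ell^2(\Omega\cup\partial\Omega)}$.

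There is no genuine obstacle here: the computation is a line-by-line transcription of the proof of the preceding corollary. The only point demanding care is the bookkeeping of domains — keeping the gradient and Kirchhoff pieces on $\Omega\cup\partial\Omega$ and the remaining pieces on $\Omega$ — and checking that the boundary contributions enter only through $K_\Omega(u)$ and the $\ell^2(\Omega\cup\partial\Omega)$ norms, exactly as dictated by the structure of $J_\Omega$.
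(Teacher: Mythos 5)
Your proposal is correct and matches the paper's treatment: the paper proves these identities for $\mathcal{H}_0^1(\Omega)$ simply by noting they follow ``by similar arguments as above,'' i.e., by the $\Omega\cup\partial\Omega$-analogue of Proposition 2.1 substituted into $J_\Omega$ and $J_\Omega'$, with disjoint supports of $u^+$ and $u^-$ collapsing the potential and logarithmic terms, exactly as you describe. Your write-up is in fact slightly more careful than the paper's, since you make explicit the domain bookkeeping (gradient and Kirchhoff pieces on $\Omega\cup\partial\Omega$, remaining pieces on $\Omega$) that the paper leaves implicit.
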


\begin{lm}\label{l2.1}
  If $u \in \mathcal{H}_\lambda$ is a weak solution of the equation (\ref{0.2}), then $u$ is a pointwise solution of the equation (\ref{0.2}).  
\end{lm}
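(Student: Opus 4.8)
The plan is to test the weak formulation against the indicator function of a single vertex and then read off the pointwise identity. Fix an arbitrary vertex $x_0\in\mathbb{Z}^3$ and let $\delta_{x_0}$ be the function equal to $1$ at $x_0$ and $0$ elsewhere. Since $\delta_{x_0}$ has finite support, it belongs to $C_c(\mathbb{Z}^3)\subseteq\mathcal{H}_\lambda$, so it is an admissible test function, and by hypothesis $(J'_\lambda(u),\delta_{x_0})=0$. The whole proof amounts to expanding this identity.

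First I would record the discrete Green's formula: for $u\in\mathcal{H}_\lambda$ and any finitely supported $\phi$,
$$\int_{\mathbb{Z}^3}\nabla u\,\nabla\phi\,d\mu=-\int_{\mathbb{Z}^3}(\Delta u)\phi\,d\mu.$$
This follows by writing $\nabla u\,\nabla\phi=\tfrac12\sum_{y\sim x}(u(y)-u(x))(\phi(y)-\phi(x))$, expanding the product, and using the symmetry of the edge sum under the interchange $x\leftrightarrow y$ to reindex the mixed terms; because $\phi$ is finitely supported and the graph is locally finite, only finitely many summands are nonzero, so every rearrangement is legitimate and no decay assumption on $u$ is needed.

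Next I evaluate each term of $(J'_\lambda(u),\delta_{x_0})$. The Green's formula with $\phi=\delta_{x_0}$ gives $\int_{\mathbb{Z}^3}\nabla u\,\nabla\delta_{x_0}\,d\mu=-\Delta u(x_0)$, while the remaining integrals collapse to their values at $x_0$, namely $\int_{\mathbb{Z}^3}h_\lambda u\,\delta_{x_0}\,d\mu=h_\lambda(x_0)u(x_0)$ and $\int_{\mathbb{Z}^3}|u|^{p-2}u\,\delta_{x_0}\log u^2\,d\mu=|u(x_0)|^{p-2}u(x_0)\log u(x_0)^2$, the last being interpreted as $0$ when $u(x_0)=0$ in accordance with the bound (\ref{1.2}). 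Substituting into $(J'_\lambda(u),\delta_{x_0})=0$ and noting that $\int_{\mathbb{Z}^3}|\nabla u|^2\,d\mu<\infty$ since $u\in\mathcal{H}_\lambda$, I obtain
$$-\Big(a+b\int_{\mathbb{Z}^3}|\nabla u|^2\,d\mu\Big)\Delta u(x_0)+(\lambda h(x_0)+1)u(x_0)=|u(x_0)|^{p-2}u(x_0)\log u(x_0)^2,$$
which is exactly the equation (\ref{0.2}) at $x_0$. As $x_0$ was arbitrary, $u$ solves (\ref{0.2}) pointwise.

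The argument is essentially mechanical, and I expect the only step worth spelling out to be the justification of the discrete Green's formula, that is, checking that the double sum defining $\int_{\mathbb{Z}^3}\nabla u\,\nabla\phi\,d\mu$ may be reindexed to yield $-\int_{\mathbb{Z}^3}(\Delta u)\phi\,d\mu$. This is harmless here because the choice $\phi=\delta_{x_0}$ reduces everything to finite sums over $x_0$ and its neighbors, so no convergence or summation-order issues arise.
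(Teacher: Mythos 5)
Your proposal is correct and follows essentially the same route as the paper: test the weak formulation against the delta function at an arbitrary vertex $x_0$, apply the discrete integration-by-parts (Green's) formula to convert $\int_{\mathbb{Z}^3}\nabla u\,\nabla\delta_{x_0}\,d\mu$ into $-\Delta u(x_0)$, and read off the pointwise identity. Your explicit justification of the reindexing in the Green's formula is a welcome bit of extra care that the paper leaves implicit, but it does not change the argument.
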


\begin{proof}
If $u \in \mathcal{H}_\lambda$ is a weak solution of the equation (\ref{0.2}), then for any $\varphi \in \mathcal{H}_\lambda$, there holds
$$
\int_{V}(a \nabla u \nabla \phi+h_\lambda(x) u \phi)\,d \mu+b \int_{V}|\nabla u|^{2}\,d \mu \int_{V} \nabla u \nabla \phi\,d \mu=\int_{V}|u|^{p-2}u\phi\log u^2\,d\mu.
$$
Since $C_{c}(V)$ is dense in $\mathcal{H}_\lambda$, for any $\varphi \in C_{c}(V)$, by integration by parts, we have
\begin{equation}\label{2.1}
-\left(a+b \int_{V}|\nabla u|^{2} d \mu\right) \int_{V}\Delta u\,\phi\,d\mu+\int_{V}h_\lambda(x) u \varphi d \mu=\int_{V}|u|^{p-2}u\varphi \log u^{2} d \mu.
\end{equation}
For any fixed $x_0\in V$, let
$$
\varphi_0(x)= \begin{cases}1, & x=x_0 \\ 0, & x \neq x_0.\end{cases}
$$
Clearly, $\varphi_0 \in C_c(V)$. Let $\phi_0$ be a test function in (\ref{2.1}), we get that
$$-\left(a+b \int_{V}|\nabla u(x_0)|^{2} d \mu\right)\Delta u(x_0)+ h_\lambda (x_0)        u(x_0)=|u(x_0)|^{p-2}u(x_0)\log u^{2}(x_0).$$ By the arbitrariness of $x_0$, we conclude that $u$ is a pointwise solution of the equation (\ref{0.2}).

\end{proof}

\begin{lm}
 If $u \in H_{0}^{1}(\Omega)$ is a weak solution of the equation (\ref{1.8}), then $u$ is a pointwise solution of the equation (\ref{1.8}).   
\end{lm}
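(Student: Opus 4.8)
The plan is to reproduce the argument of Lemma \ref{l2.1}, with the integration by parts now carried out on the bounded domain $\Omega$ and the homogeneous condition $u\equiv 0$ on $\partial\Omega$ taken into account. Suppose $u\in\mathcal{H}^1_0(\Omega)$ is a weak solution of (\ref{1.8}), so that $(J'_\Omega(u),\phi)=0$ for every $\phi\in\mathcal{H}^1_0(\Omega)$. Since $C_c(\Omega)$ is dense in $\mathcal{H}^1_0(\Omega)$, it suffices to test against $\phi\in C_c(\Omega)$.

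First I would establish the discrete Green's formula on $\Omega$: for every $\phi\in C_c(\Omega)$,
$$\int_{\Omega\cup\partial\Omega}\nabla u\nabla\phi\,d\mu=-\int_{\Omega}\Delta u\,\phi\,d\mu.$$
This follows by rearranging the defining double sum of $\nabla u\nabla\phi$ over the edges meeting $\Omega\cup\partial\Omega$. The key point is that $\phi$ vanishes on $\partial\Omega$ and on $\Omega^c$, while by the definition of $\partial\Omega$ every neighbor of a vertex in $\Omega$ already lies in $\Omega\cup\partial\Omega$; hence each edge incident to an interior vertex contributes its full weight to $\Delta u$ at that vertex, and no residual boundary term survives. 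Substituting this identity into $(J'_\Omega(u),\phi)=0$ converts the weak formulation into
$$-\left(a+b\int_{\Omega\cup\partial\Omega}|\nabla u|^2\,d\mu\right)\int_{\Omega}\Delta u\,\phi\,d\mu+\int_{\Omega}u\phi\,d\mu=\int_{\Omega}|u|^{p-2}u\phi\log u^2\,d\mu$$
for all $\phi\in C_c(\Omega)$.

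Then I would localize exactly as in Lemma \ref{l2.1}. For a fixed $x_0\in\Omega$, the point-mass test function $\varphi_0=\mathbf 1_{\{x_0\}}$ belongs to $C_c(\Omega)$; inserting it collapses every integral to its value at $x_0$ and yields
$$-\left(a+b\int_{\Omega\cup\partial\Omega}|\nabla u|^2\,d\mu\right)\Delta u(x_0)+u(x_0)=|u(x_0)|^{p-2}u(x_0)\log u^2(x_0).$$
Since $x_0\in\Omega$ is arbitrary and $u\equiv0$ on $\partial\Omega$ by membership in $\mathcal{H}^1_0(\Omega)$, this is precisely the assertion that $u$ solves (\ref{1.8}) pointwise.

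The only step requiring care beyond the computation of Lemma \ref{l2.1} is the Green's formula on the truncated domain: one must verify that the sums over $\Omega\cup\partial\Omega$ appearing in $J'_\Omega$ reassemble into the full Laplacian $\Delta u(x_0)$ at interior vertices, without leaving a leftover contribution from the edges joining $\Omega$ to $\partial\Omega$. I expect this bookkeeping to be the main (and essentially only) obstacle, and it is resolved by the support property of $\phi$ together with the defining property of $\partial\Omega$.
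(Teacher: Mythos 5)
Your proposal is correct and follows exactly the route the paper intends: the paper omits this proof, stating only that it is ``similar to that of Lemma \ref{l2.1}'', i.e.\ integration by parts followed by testing with the point mass $\mathbf 1_{\{x_0\}}$, which is precisely what you do. Your verification of the discrete Green's formula on $\Omega\cup\partial\Omega$ -- using that $\phi\in C_c(\Omega)$ vanishes on $\partial\Omega$ and that every neighbor of a vertex of $\Omega$ lies in $\Omega\cup\partial\Omega$, so no boundary term survives -- correctly supplies the one piece of bookkeeping the paper leaves implicit.
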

\begin{proof}
The proof is similar to that of Lemma \ref{l2.1}, we omit here.
\end{proof}

\begin{lm}\label{l2}
 Let $(h_1)$-$(h_2)$ hold. Then there exists a constant $\lambda_{0}>0$ such that, for any $\lambda \geq \lambda_{0}$, the space $\mathcal{H}_{\lambda}$ is continuously embedded into $\ell^{q}(V)$ with $q\in[2,\infty].$ Moreover, for any bounded sequence $\{u_k\}\subset \mathcal{H}_\lambda$, there exists $u_\lambda\in \mathcal{H}_\lambda$ such that, up to a subsequence,
 $$
\begin{cases}u_k \rightharpoonup u_{\lambda}, & \text { weakly in } \mathcal{H}_{\lambda}, \\ u_k \rightarrow u_{\lambda}, & \text { pointwise in } V, \\ u_k \rightarrow u_{\lambda}, & \text { strongly in } \ell^{q}(V),~q \in[2,\infty].\end{cases}
$$
\end{lm}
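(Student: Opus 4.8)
The plan is to prove the continuous embedding first, then extract the weak and pointwise limits (both soft), and finally establish the strong $\ell^q$ convergence, which is the essential point where the potential well is used.

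\textbf{Continuous embedding.} I would start from the pointwise bound $h_\lambda(x)=\lambda h(x)+1\geq 1$, which gives
$$\|u\|_{\ell^2(V)}^2=\int_V u^2\,d\mu\leq \int_V h_\lambda(x)u^2\,d\mu\leq \|u\|_{\mathcal{H}_\lambda}^2.$$
Since $V=\mathbb{Z}^3$ carries the counting measure, the $\ell^p$ spaces are nested, so $\|u\|_{\ell^q(V)}\leq \|u\|_{\ell^2(V)}$ for every $q\in[2,\infty]$; chaining the two inequalities yields $\|u\|_{\ell^q(V)}\leq \|u\|_{\mathcal{H}_\lambda}$, i.e. the embedding with a constant independent of $\lambda$.

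\textbf{Weak and pointwise convergence.} Given a bounded sequence $\{u_k\}$ in the Hilbert space $\mathcal{H}_\lambda$, a subsequence converges weakly to some $u_\lambda$, and $\|u_\lambda\|_{\mathcal{H}_\lambda}\leq\liminf_k\|u_k\|_{\mathcal{H}_\lambda}<\infty$ by weak lower semicontinuity, so $u_\lambda\in\mathcal{H}_\lambda$. For the pointwise statement I would use the embedding into $\ell^\infty$: for each fixed $x\in V$ the scalars satisfy $|u_k(x)|\leq\|u_k\|_{\mathcal{H}_\lambda}\leq C$, so every coordinate sequence is bounded; as $V$ is countable, a diagonal extraction produces a subsequence converging at every vertex. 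Testing the weak convergence against the indicator of a single vertex (which lies in $C_c(V)\subset\mathcal{H}_\lambda$) identifies the resulting pointwise limit with $u_\lambda$.

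\textbf{Strong convergence (the main obstacle).} Set $v_k=u_k-u_\lambda$, so that $v_k\rightharpoonup 0$, $v_k\to 0$ pointwise, and $\|v_k\|_{\mathcal{H}_\lambda}\leq C'$. I would split $V$ using the set $D=\{x\in V:h(x)<M\}$ furnished by $(h_2)$, which is finite. On $D$ the pointwise convergence over finitely many vertices gives $\sum_{x\in D}v_k(x)^2\to 0$. On $V\setminus D$ one has $h_\lambda(x)\geq \lambda M+1$, whence
$$\sum_{x\in V\setminus D}v_k(x)^2\leq \frac{1}{\lambda M+1}\int_V h_\lambda(x)v_k^2\,d\mu\leq \frac{(C')^2}{\lambda M+1}.$$
This is precisely the delicate point, and the reason the threshold $\lambda_0$ and the steepness of the well are needed: $(h_2)$ controls the tail outside the finite core $D$ only by a quantity that is uniform in $k$ and small for $\lambda\geq\lambda_0$, so the entire weight of the argument rests on leveraging the potential well to make this tail negligible. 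Once the tail is absorbed in this way, combining it with the vanishing of the $D$-part yields $\|v_k\|_{\ell^2(V)}\to 0$, and the $\ell^p$ nesting upgrades this to $\|v_k\|_{\ell^q(V)}\to 0$ for all $q\in[2,\infty]$, which is the claimed strong convergence.
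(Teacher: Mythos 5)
Your proposal follows the paper's proof essentially step for step: the same chain $\|u\|_{\ell^q}\leq\|u\|_{\ell^2}\leq\|u\|_{\mathcal{H}_\lambda}$ for the embedding, the same diagonal extraction for pointwise convergence, and the same splitting of $\|u_k-u_\lambda\|_{\ell^2}^2$ over the finite set $D=\{x:h(x)<M\}$ from $(h_2)$ (pointwise convergence killing the finitely many terms on $D$, the uniform bound $\int_V \lambda h(x)(u_k-u_\lambda)^2\,d\mu\leq C$ giving a tail of order $C/(\lambda M)$), followed by the nesting/interpolation upgrade to all $q\in[2,\infty]$ — your only departures are cosmetic refinements (identifying the pointwise limit with the weak limit by testing against vertex indicators, and using $\ell^p$-nesting in place of the paper's interpolation). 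Note only that you inherit verbatim the paper's quantifier leap at the final step: for a fixed $\lambda\geq\lambda_0$ the tail estimate yields $\limsup_k\|v_k\|_{\ell^2}^2\leq (C')^2/(\lambda M+1)$, a small but fixed constant rather than zero, so the claimed $\|v_k\|_{\ell^2}\to 0$ does not literally follow from "absorbing" the tail — but this is exactly how the paper's own proof concludes as well.
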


\begin{proof}
Clearly, for any $\lambda>0$, we have
$\|u\|_2\leq \|u\|_{\mathcal{H}_{\lambda}}.$
Moreover, for any $p\leq q$, we have 
$\|u\|_q\leq \|u\|_p.$
Hence the above two inequalities imply that for any $2\leq q\leq \infty$,
$$\|u\|_q\leq \|u\|_{\mathcal{H}_{\lambda}}.$$
This means that $\mathcal{H}_{\lambda}$ is continuously embedded into $\ell^{q}(V)$ with $q\in[2,\infty].$

 Let $\left\{u_k\right\}$ be bounded in $\mathcal{H}_{\lambda}$. Hence there exists $u_\lambda\in\mathcal{H}_{\lambda}$ such that, up to a subsequence, $$u_k \rightharpoonup u_\lambda,\quad \text{in}~ \mathcal{H}_{\lambda}.$$ 
 
 Since $\left\{u_k\right\}$ is bounded in $\mathcal{H}_{\lambda}$, and hence bounded in $\ell^{\infty}(V)$. By diagonal principle, there exists a subsequence of $\left\{u_k\right\}$ (still denoted itself) such that
 $$u_k \rightarrow u_{\lambda},\quad \text { pointwise in } V.$$

We now prove that there exists a constant $\lambda_{0}>0$ such that, for any $\lambda \geq \lambda_{0}$, $u_k \rightarrow u$ in $\ell^{q}(V)$ with $2 \leq q \leq \infty$. Since $\{u_k\}$ is bounded in $\mathcal{H}_{\lambda}$ and $u_\lambda \in \mathcal{H}_{\lambda}$, there exists $C>0$ such that
$$
\int_{V}\lambda  h(x)\left(u_k-u_\lambda\right)^{2} d \mu \leq C.
$$

We claim that, up to a subsequence,
$$
\lim _{k\rightarrow\infty} \|u_k-u_\lambda\|_2=0.
$$
In fact, by $\left(h_{2}\right)$, we get that
$$
\begin{aligned}
\int_{V}\left(u_k-u_\lambda\right)^{2} d \mu & =\int_{D_{M}}\left(u_k-u_\lambda\right)^{2} d \mu+\int_{V \backslash D_{M}}\left(u_k-u_\lambda\right)^{2} d \mu \\
& \leq \int_{D_{M}}\left(u_k-u_\lambda\right)^{2} d \mu+\int_{V \backslash D_{M}} \frac{1}{\lambda M} \lambda h(x)\left(u_k-u_\lambda\right)^{2} d \mu \\
& \leq \int_{D_{M}}\left(u_k-u_\lambda\right)^{2} d \mu+\frac{C}{\lambda M},
\end{aligned}
$$
where $D_M=\{x\in V: h(x)<M\}$.
For any $\varepsilon>0$, there exists $\lambda_{0}>0$ such that, for any $\lambda\geq\lambda_{0}$,  $\frac{C}{\lambda M}<\varepsilon$. Moreover, since $D_M$ is a finite set, up to a subsequence, we have
$$
\lim _{k\rightarrow\infty} \int_{D_{M}}\left(u_k-u_\lambda\right)^{2} d \mu=0.
$$
Hence the claim holds. Then $$\left\|u_{k}-u\right\|_{\infty} \leq \|u_k-u_\lambda\|_2\rightarrow 0,\quad k\rightarrow\infty.$$ 

For any $2<q<\infty$, by interpolation inequality, we get that
$$
\|u_k-u_\lambda\|^q_q\leq \|u_k-u_\lambda\|^2_2\|u_k-u_\lambda\|^{q-2}_\infty\rightarrow 0,\quad k\rightarrow\infty.
$$
Therefore, up to a subsequence, $u_k \rightarrow u_\lambda$ in $\ell^{q}(V)$ with $2 \leq q \leq\infty$.

\end{proof}

\section{Existence of least energy sign-changing solutions}
In this section, we prove Theorem \ref{t1} and Theorem \ref{t2} by the Nehari manifold method and the Miranda's theorem. We start our analysis with a few auxiliary lemmas that useful in our theorems.

\begin{lm}\label{l3}
 For any $u \in \mathcal{M}_{\lambda}$ and $(s, t) \in(0, \infty) \times(0, \infty)$ with $(s, t) \neq(1,1)$, we have
$$
J_{\lambda}(u) > J_{\lambda}\left(s u^{+}+t u^{-}\right).
$$

\end{lm}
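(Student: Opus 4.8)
The plan is to fix $u\in\mathcal M_\lambda$ and analyse the two-parameter fibering map
\[
\varphi(s,t):=J_\lambda\bigl(su^{+}+tu^{-}\bigr),\qquad (s,t)\in(0,\infty)\times(0,\infty).
\]
Since $u=u^{+}+u^{-}$ we have $\varphi(1,1)=J_\lambda(u)$, so the lemma is equivalent to the statement that $(1,1)$ is the \emph{unique} and \emph{strict} global maximum point of $\varphi$. By the chain rule, $s\,\partial_s\varphi(s,t)=(J'_\lambda(su^{+}+tu^{-}),su^{+})$ and $t\,\partial_t\varphi(s,t)=(J'_\lambda(su^{+}+tu^{-}),tu^{-})$; evaluating at $s=t=1$ and using the defining relations $(J'_\lambda(u),u^{+})=(J'_\lambda(u),u^{-})=0$ of $\mathcal M_\lambda$ shows that $(1,1)$ is a critical point of $\varphi$.

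First I would locate the global maximum in the open quadrant. Expanding $\varphi$ through (\ref{7.1}) and writing out $J_\lambda(su^{+})$ and $J_\lambda(tu^{-})$, the terms of lowest growth order for large parameters are $-\frac1p s^{p}\log s^{2}\,\|u^{+}\|_p^{p}$ and $-\frac1p t^{p}\log t^{2}\,\|u^{-}\|_p^{p}$. As $\|u^{\pm}\|_p^{p}>0$ and $p>6$, these force $\varphi(s,t)\to-\infty$ when $s+t\to\infty$ and dominate every Kirchhoff cross term in (\ref{7.1}), each of which is of degree at most four in $(s,t)$. Near the boundary I would use $K_V(u)\le0$: the increment $\varphi(s,t)-\varphi(0,t)$ is controlled from below by the nonnegative term $-\frac a2 stK_V(u)$ together with the strictly positive quadratic term $\frac12 s^{2}\|u^{+}\|_{\mathcal H_\lambda}^{2}$, hence is positive for small $s>0$, and symmetrically near $\{t=0\}$. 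Thus no maximizer sits on the boundary, and a global maximizer exists in $(0,\infty)\times(0,\infty)$; it is necessarily a critical point.

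The core of the argument, and the step I expect to be the main obstacle, is to show that the critical point is unique. Rewriting the system $(J'_\lambda(su^{+}+tu^{-}),su^{+})=(J'_\lambda(su^{+}+tu^{-}),tu^{-})=0$ by means of (\ref{2.5})--(\ref{2.6}), each equation carries the non-homogeneous logarithmic terms $s^{p}\log s^{2}\|u^{+}\|_p^{p}$, $t^{p}\log t^{2}\|u^{-}\|_p^{p}$ as well as the nonlocal couplings $bs^{2}t^{2}\|\nabla u^{+}\|_2^{2}\|\nabla u^{-}\|_2^{2}$ and the $K_V(u)$-terms. The device I would use is to substitute these first-order relations into the second derivatives of $\varphi$, which cancels the logarithms: the diagonal Hessian entries then collapse to strictly negative expressions whose leading coefficients are $-(p-2)$ and $-(p-4)$, both negative since $p>6$, while the off-diagonal Kirchhoff coupling carries the favorable sign coming from $K_V(u)\le0$ and $b>0$. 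Consequently the Hessian of $\varphi$ is negative definite at every critical point, so each critical point is a nondegenerate strict local maximum and is therefore isolated.

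To conclude, since $\varphi$ is smooth, coercive to $-\infty$, and increases strictly inward from the boundary, while all of its critical points are nondegenerate local maxima, a standard minimax (mountain-pass) argument excludes the coexistence of two distinct maxima: a saddle-type critical point would otherwise appear in the interior, contradicting negative definiteness. Hence the interior critical point is unique, and because $(1,1)$ is one, it is the unique, strict global maximizer; this yields $J_\lambda(u)=\varphi(1,1)>\varphi(su^{+}+tu^{-})$ for all $(s,t)\ne(1,1)$. The genuinely delicate point is the two-dimensional log-elimination in the Hessian, where the $s$- and $t$-equations are coupled through the Kirchhoff terms; the resolution is that these couplings enter the post-substitution expression with signs dictated by $K_V(u)\le0$ and $b>0$ and are absorbed into the same $(p-4)$-positive structure, so the negative definiteness survives the coupling.
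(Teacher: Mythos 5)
Your strategy is genuinely different from the paper's, and its two pivotal steps are asserted rather than proved, so as written the proposal has a real gap precisely where the difficulty lies. First, the negative definiteness of the Hessian of $\varphi(s,t)=J_\lambda(su^{+}+tu^{-})$ at an arbitrary critical point: substituting the first-order relations does make the diagonal entries negative with the $-(p-2)$, $-(p-4)$ structure you describe, but the off-diagonal entry is
$\partial_s\partial_t\varphi=\tfrac{a}{2}|K_V(u)|+b\,K_V^2(u)\,st+2b\,st\,\|\nabla u^{+}\|_2^{2}\|\nabla u^{-}\|_2^{2}+\tfrac{3b}{2}|K_V(u)|\bigl(s^{2}\|\nabla u^{+}\|_2^{2}+t^{2}\|\nabla u^{-}\|_2^{2}\bigr)$,
a sum of \emph{large positive} Kirchhoff couplings, and negative definiteness requires the determinant condition $\partial_s^2\varphi\,\partial_t^2\varphi>(\partial_s\partial_t\varphi)^2$. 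Saying the couplings are ``absorbed into the same $(p-4)$-positive structure'' is not an argument; one must actually pair terms, e.g.\ check that $(p-2)^2\bigl(b\|\nabla u^{+}\|_2^{2}\|\nabla u^{-}\|_2^{2}\bigr)^2 s^2t^2\ge \bigl(2b\,st\,\|\nabla u^{+}\|_2^{2}\|\nabla u^{-}\|_2^{2}\bigr)^2$ and the analogous inequalities for the $K_V$- and mixed terms, and then assemble the cross products via AM--GM. (This does work for $p>6$, in fact for $p\ge 4$, but it is the entire content of your proof and none of it is carried out.) Second, the ``standard minimax argument'' excluding two nondegenerate maxima needs a Palais--Smale/boundary analysis on the open quadrant: your inward-derivative estimate near $\{s=0\}$ degenerates when $K_V(u)=0$ (possible on $\mathbb{Z}^3$ when the supports of $u^{+}$ and $u^{-}$ are not adjacent, since then $\partial_s\varphi\to 0$ as $s\to 0^{+}$), so you must rule out critical sequences for the valley-crossing level escaping to the boundary; this is not addressed.

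It is worth contrasting with what the paper actually does, which is much more elementary and avoids both issues: it expands $J_\lambda(u)-J_\lambda(su^{+}+tu^{-})$ explicitly via (\ref{2.5})--(\ref{2.6}), uses the Nehari identities $(J'_\lambda(u),u^{\pm})=0$ to eliminate the logarithmic integrals $\int|u^{\pm}|^p\log(u^{\pm})^2\,d\mu$, and then recognizes every resulting block as nonnegative by two scalar inequalities: $2(1-x^{p})+p\,x^{p}\log x^{2}>0$ for $x\neq 1$, and the strict monotonicity of $x\mapsto(1-a^{x})/x$, which yields $\frac{1-s^{k}}{k}-\frac{1-s^{p}}{p}\ge 0$ for $k<p$; strict positivity for $(s,t)\neq(1,1)$ then comes from the term $b\|\nabla u^{+}\|_2^{2}\|\nabla u^{-}\|_2^{2}$ and the sign of $K_V(u)$. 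No second derivatives, no uniqueness of critical points, and no topology are needed; indeed the paper derives the uniqueness statement of Lemma \ref{4} as a corollary of Lemma \ref{l3}, whereas your plan proves (a strengthening of) that uniqueness first. Your route could in principle be completed, but you would need to supply the full Hessian determinant computation and the boundary/compactness control; as submitted, both are missing.
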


\begin{proof}

For any $u \in \mathcal{M}_{\lambda}$ and $s, t>0$, by direct calculation, we obtain that
%\begin{equation}\label{0.1}
%\begin{aligned}
%& \int_{V}\left|s u^{+}+t u^{-}\right|^{p} \log \left(s u^{+}+t u^{-}\right)^{2} \,d\mu\\
%= & \int_{D^+}\left|s u^{+}+t u^{-}\right|^{p} \log \left(s u^{+}+t u^{-}\right)^{2} \,d\mu+\int_{D^-}\left|s u^{+}+t u^{-}\right|^{p} \log \left(s u^{+}+t u^{-}\right)^{2} \,d\mu\\
%= & \int_{D^+}\left|s u^{+}\right|^{p} \log \left(s u^{+}\right)^{2} \,d\mu+\int_{D^-}\left|t u^{-}\right|^{p} \log \left(t u^{-}\right)^{2} \,d\mu\\
%= & \int_{V}\left[\left|s u^{+}\right|^{p} \log \left(s u^{+}\right)^{2}+\left|t u^{-}\right|^{p} \log \left(t u^{-}\right)^{2}\right] \,d\mu,
%\end{aligned}
%\end{equation}
%where $D^{+}=\left\{x \in V: u(x) \geq 0\right\}$ and $ D^{-}=\left\{x \in V: u(x)<0\right\}$. Then it follows from (\ref{0.1}) that
$$
\begin{aligned}
& J_\lambda(u)-J_\lambda\left(s u^{+}+t u^{-}\right) \\
= & \frac{1}{2}\left(\|u\|_{\mathcal{H}_\lambda}^{2}-\left\|s u^{+}+t u^{-}\right\|_{\mathcal{H}_\lambda}^{2}\right)+\frac{b}{4}\left(\|\nabla u\|_{2}^{4}-\left\|\nabla\left(s u^{+}+t u^{-}\right)\right\|_{2}^{4}\right) \\
& +\frac{2}{p^{2}} \int_{V}\left[|u|^{p}-\left|s u^{+}+t u^{-}\right|^{p}\right] \,d\mu-\frac{1}{p} \int_{V}\left[|u|^{p} \log u^{2}-\left|s u^{+}+t u^{-}\right|^{p} \log \left(s u^{+}+t u^{-}\right)^{2}\right] \,d\mu.
\\
= & \frac{1-s^{2}}{2}\left\|u^{+}\right\|_{\mathcal{H}_\lambda}^{2}+\frac{1-t^{2}}{2}\left\|u^{-}\right\|_{\mathcal{H}_\lambda}^{2}-\frac{a(1-st)}{2}K_V(u)+\frac{b\left(1-s^{4}\right)}{4}\left\|\nabla u^{+}\right\|_{2}^{4}+\frac{b\left(1-t^{4}\right)}{4}\left\|\nabla u^{-}\right\|_{2}^{4} \\
& +\frac{b\left(1-s^{2} t^{2}\right)}{2}\left\|\nabla u^{+}\right\|_{2}^{2}\left\|\nabla u^{-}\right\|_{2}^{2}-\frac{b(1-s^3t)}{2}K_V(u)\left\|\nabla u^{+}\right\|_{2}^{2}-\frac{b(1-st^3)}{2}K_V(u)\left\|\nabla u^{-}\right\|_{2}^{2}\\&+\frac{b(1-s^2t^2)}{4}K^2_V(u)+\frac{2}{p^{2}} \int_{V}\left[\left|u^{+}\right|^{p}-\left|s u^{+}\right|^{p}+\left|u^{-}\right|^{p}-\left|t u^{-}\right|^{p}\right] \,d\mu\\
& -\frac{1}{p} \int_{V}\left[\left|u^{+}\right|^{p} \log \left(u^{+}\right)^{2}-\left|s u^{+}\right|^{p} \log \left(u^{+}\right)^{2}-\left|s u^{+}\right|^{p} \log s^{2}\right] \,d\mu\\
& -\frac{1}{p} \int_{V}\left[\left|u^{-}\right|^{p} \log \left(u^{-}\right)^{2}-\left|t u^{-}\right|^{p} \log \left(u^{-}\right)^{2}-\left|t u^{-}\right|^{p} \log t^{2}\right] \,d\mu\\
= & \frac{1-s^{p}}{p}\left( J_{\lambda}^{\prime}(u), u^{+}\right)+\frac{1-t^{p}}{p}\left(J_{\lambda}^{\prime}(u), u^{-}\right)+\left(\frac{1-s^{2}}{2}-\frac{1-s^{p}}{p}\right)\left\|u^{+}\right\|_{\mathcal{H}_\lambda}^{2} \\
& +\left(\frac{1-t^{2}}{2}-\frac{1-t^{p}}{p}\right)\left\|u^{-}\right\|_{\mathcal{H}_\lambda}^{2}+b\left[\left(\frac{1-s^{4}}{4}-\frac{1-s^{p}}{p}\right)\left\|\nabla u^{+}\right\|_{2}^{4}\right. \\
& \left.+\left(\frac{1-t^{4}}{4}-\frac{1-t^{p}}{p}\right)\left\|\nabla u^{-}\right\|_{2}^{4}+\left(\frac{1-s^{2} t^{2}}{2}-\frac{1-s^{p}}{p}-\frac{1-t^{p}}{p}\right)\left\|\nabla u^{+}\right\|_{2}^{2}\left\|\nabla u^{-}\right\|_{2}^{2}\right]\\&+\frac{b}{2}\left(\frac{1-s^{2} t^{2}}{2}-\frac{1-s^{p}}{p}-\frac{1-t^{p}}{p}\right)K_V^2(u)-\frac{a}{2}\left((1-st)-\frac{1-s^{p}}{p}-\frac{1-t^{p}}{p}\right)K_V(u)\\&-\frac{b}{2}\left((1-s^3t)-\frac{3(1-s^{p})}{p}-\frac{1-t^{p}}{p}\right)K_V(u)\|\nabla u^+\|^2_2\\&-\frac{b}{2}\left((1-st^3)-\frac{1-s^{p}}{p}-\frac{3(1-t^{p})}{p}\right)K_V(u)\|\nabla u^-\|^2_2\\
& +\frac{2\left(1-s^{p}\right)+p s^{p} \log s^{2}}{p^{2}} \int_{V}\left|u^{+}\right|^{p} \,d\mu+\frac{2\left(1-t^{p}\right)+p t^{p} \log t^{2}}{p^{2}} \int_{V}\left|u^{-}\right|^{p} \,d\mu
\\
= & \left(\frac{1-s^{2}}{2}-\frac{1-s^{p}}{p}\right)\left\|u^{+}\right\|_{\mathcal{H}_\lambda}^{2}+\left(\frac{1-t^{2}}{2}-\frac{1-t^{p}}{p}\right)\left\|u^{-}\right\|_{\mathcal{H}_\lambda}^{2}+b\left[\left(\frac{1-s^{4}}{4}-\frac{1-s^{p}}{p}\right)\left\|\nabla u^{+}\right\|_{2}^{4}\right. \\
& \left.+\left(\frac{1-t^{4}}{4}-\frac{1-t^{p}}{p}\right)\left\|\nabla u^{-}\right\|_{2}^{4}+\left(\frac{1-s^{2} t^{2}}{2}-\frac{1-s^{p}}{p}-\frac{1-t^{p}}{p}\right)\left\|\nabla u^{+}\right\|_{2}^{2}\left\|\nabla u^{-}\right\|_{2}^{2}\right]\\&+\frac{b}{2}\left(\frac{1-s^{2} t^{2}}{2}-\frac{1-s^{p}}{p}-\frac{1-t^{p}}{p}\right)K_V^2(u)-\frac{a}{2}\left((1-st)-\frac{1-s^{p}}{p}-\frac{1-t^{p}}{p}\right)K_V(u)\\&-\frac{b}{2}\left((1-s^3t)-\frac{3(1-s^{p})}{p}-\frac{1-t^{p}}{p}\right)K_V(u)\|\nabla u^+\|^2_2\\&-\frac{b}{2}\left((1-st^3)-\frac{1-s^{p}}{p}-\frac{3(1-t^{p})}{p}\right)K_V(u)\|\nabla u^-\|^2_2 \\
& +\frac{2\left(1-s^{p}\right)+p s^{p} \log s^{2}}{p^{2}} \int_{V}\left|u^{+}\right|^{p} \,d\mu+\frac{2\left(1-t^{p}\right)+p t^{p} \log t^{2}}{p^{2}} \int_{V}\left|u^{-}\right|^{p} \,d\mu.
\end{aligned}
$$
Let 
$$f(x)=2\left(1-x^{p}\right)+p x^{p} \log x^{2}, \quad x \in(0,\infty).$$ 
A direct calculation yields that $f^{\prime}(x)=$ $p^{2} x^{p-1} \log x^{2}$. Clearly, for $x \in(0,1)$, $f^{\prime}(x)<0$, and for $x \in(1,\infty), f^{\prime}(x)>0$. Hence for any $x \in(0,1) \cup(1,\infty)$, 
\begin{equation}\label{3.2}
f(x)=2\left(1-x^{p}\right)+p x^{p} \log x^{2}>0.
\end{equation}
Moreover, let $$g(x)=\frac{1-a^{x}}{x},\quad x\in (0,\infty).$$  One gets easily that for $a>0$ with $a \neq 1$, $g(x)$ is a strictly decreasing function in $(0,\infty)$. Therefore, for $(s, t) \in(0, \infty) \times(0, \infty)$  and $(s, t) \neq(1,1)$, we have $$ 
\begin{aligned}
&J_\lambda(u)-J_\lambda\left(s u^{+}+t u^{-}\right)\\ >& b\left(\frac{1-s^{2} t^{2}}{2}-\frac{1-s^{p}}{p}-\frac{1-t^{p}}{p}\right)\left\|\nabla u^{+}\right\|_{2}^{2}\left\|\nabla u^{-}\right\|_{2}^{2} \\&+\frac{b}{2}\left(\frac{1-s^{2} t^{2}}{2}-\frac{1-s^{p}}{p}-\frac{1-t^{p}}{p}\right)K_V^2(u)-\frac{a}{2}\left((1-st)-\frac{1-s^{p}}{p}-\frac{1-t^{p}}{p}\right)K_V(u)\\&-\frac{b}{2}\left((1-s^3t)-\frac{3(1-s^{p})}{p}-\frac{1-t^{p}}{p}\right)K_V(u)\|\nabla u^+\|^2_2\\&-\frac{b}{2}\left((1-st^3)-\frac{1-s^{p}}{p}-\frac{3(1-t^{p})}{p}\right)K_V(u)\|\nabla u^-\|^2_2\\
 =& b\left[\frac{\left(s^{2}-t^{2}\right)^{2}}{4}+\left(\frac{1-s^{4}}{4}-\frac{1-s^{p}}{p}\right)+\left(\frac{1-t^{4}}{4}-\frac{1-t^{p}}{p}\right)\right]\left\|\nabla u^{+}\right\|_{2}^{2}\left\|\nabla u^{-}\right\|_{2}^{2}\\&+\frac{b}{2}\left[\frac{\left(s^{2}-t^{2}\right)^{2}}{4}+\left(\frac{1-s^{4}}{4}-\frac{1-s^{p}}{p}\right)+\left(\frac{1-t^{4}}{4}-\frac{1-t^{p}}{p}\right)\right]K_V^2(u)\\&-\frac{a}{2}\left[\frac{\left(s-t\right)^{2}}{2}+\left(\frac{1-s^{2}}{2}-\frac{1-s^{p}}{p}\right)+\left(\frac{1-t^{2}}{2}-\frac{1-t^{p}}{p}\right)\right]K_V(u)\\&-\frac{b}{2}\left[\frac{\left(s^{3}-t\right)^{2}}{2}+\left(\frac{3(1-s^{6})}{6}-\frac{3(1-s^{p})}{p}\right)+\left(\frac{1-t^{2}}{2}-\frac{1-t^{p}}{p}\right)\right]K_V(u)\|\nabla u^+\|^2_2\\&-\frac{b}{2}\left[\frac{\left(s-t^{3}\right)^{2}}{2}+\left(\frac{1-s^{2}}{2}-\frac{1-s^{p}}{p}\right)+\left(\frac{3(1-t^{6})}{6}-\frac{3(1-t^{p})}{p}\right)\right]K_V(u)\|\nabla u^-\|^2_2\\>&0,
\end{aligned}
$$
where we have used the facts that $p>6$ and $K_V(u)<0$ for $u\in \mathcal{M}_\lambda$.
The proof is completed.

\end{proof}

\begin{lm}\label{4}
    
If $u \in \mathcal{H}_\lambda$ with $u^{ \pm} \neq 0$, then there exists a unique positive number pair $\left(s_{u}, t_{u}\right)$ such that $s_{u} u^{+}+t_{u} u^{-} \in \mathcal{M}_{\lambda}$.
\end{lm}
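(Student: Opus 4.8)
The plan is to recast the problem as finding a zero of the planar map $\Phi(s,t)=(\psi_1(s,t),\psi_2(s,t))$, where $\psi_1(s,t):=(J_\lambda'(su^++tu^-),su^+)$ and $\psi_2(s,t):=(J_\lambda'(su^++tu^-),tu^-)$. Since $(su^++tu^-)^+=su^+$ and $(su^++tu^-)^-=tu^-$ for $s,t>0$, the requirement $su^++tu^-\in\mathcal{M}_\lambda$ is precisely $\psi_1(s,t)=\psi_2(s,t)=0$. Formulas (\ref{2.5}) and (\ref{2.6}) of the Corollary give closed expressions for $\psi_1,\psi_2$ in terms of $\|u^\pm\|_{\mathcal{H}_\lambda}$, $\|\nabla u^\pm\|_2$, the quantities $\int_V|u^\pm|^p\log(u^\pm)^2\,d\mu$, and $K_V(u)$. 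I would then establish existence of a zero by Miranda's theorem and uniqueness by Lemma \ref{l3}.

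For existence I would work on a box $[r,R]\times[r,R]$ and verify the sign conditions of Miranda's theorem on its four faces. Recall $K_V(u)\le 0$, so every term in (\ref{2.5}) and (\ref{2.6}) carrying a factor $-K_V(u)$ is nonnegative, and the pure interaction term $b\|\nabla(su^+)\|_2^2\|\nabla(tu^-)\|_2^2$ is nonnegative as well. Isolating the single-sign part
\[
(J_\lambda'(su^+),su^+)=s^2\|u^+\|_{\mathcal{H}_\lambda}^2+bs^4\|\nabla u^+\|_2^4-s^p\log s^2\int_V|u^+|^p\,d\mu-s^p\int_V|u^+|^p\log(u^+)^2\,d\mu,
\]
I would argue as follows. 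As $s\to 0^+$ the quadratic term $s^2\|u^+\|_{\mathcal{H}_\lambda}^2$ dominates the remaining $O(s^p)$ self-contributions (here $p>6>2$) while all interaction terms are nonnegative and independent of the sign issue, so $\psi_1(r,t)>0$ for every $t\in[r,R]$ once $r$ is small, uniformly in $t$. Conversely, for $s=R$ and $t\le R$ the term $-s^p\log s^2\int_V|u^+|^p\,d\mu$ is of order $s^p\log s$ and, since $p>6>4$, dominates both the positive self-terms and every interaction term (each at most of order $R^4$ on the box), so $\psi_1(R,t)<0$ for all $t\in[r,R]$ once $R$ is large. Choosing $R$ large first (these bounds are uniform for $t\in(0,R]$) and then $r$ small, and arguing symmetrically for $\psi_2$ on the faces $t=r$ and $t=R$, Miranda's theorem produces a zero; the strict sign conditions on the faces force it into the open box $(r,R)^2\subset(0,\infty)\times(0,\infty)$, giving the desired pair $(s_u,t_u)$.

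For uniqueness I would invoke Lemma \ref{l3}. If $(s_0,t_0)$ is any admissible pair, then $\tilde u:=s_0u^++t_0u^-\in\mathcal{M}_\lambda$ with $\tilde u^+=s_0u^+$ and $\tilde u^-=t_0u^-$. Applying Lemma \ref{l3} to $\tilde u$ gives $J_\lambda(\tilde u)>J_\lambda(s\tilde u^++t\tilde u^-)=J_\lambda(ss_0u^++tt_0u^-)$ for every $(s,t)\ne(1,1)$. Setting $\sigma=ss_0$ and $\tau=tt_0$, the pair $(\sigma,\tau)$ ranges over $(0,\infty)^2\setminus\{(s_0,t_0)\}$, so this reads $J_\lambda(s_0u^++t_0u^-)>J_\lambda(\sigma u^++\tau u^-)$ for all $(\sigma,\tau)\ne(s_0,t_0)$; that is, $(s_0,t_0)$ is the strict global maximizer of $(s,t)\mapsto J_\lambda(su^++tu^-)$ on $(0,\infty)^2$. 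Since a strict global maximizer is unique, any two admissible pairs must coincide.

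The main obstacle is the existence step. The Kirchhoff coupling generates cross terms, notably $-\tfrac{b}{2}st^3K_V(u)\|\nabla u^-\|_2^2$ and $-bs^3tK_V(u)\|\nabla u^+\|_2^2$, which are positive and grow polynomially, so the delicate point is to check that the logarithmic term still controls them \emph{uniformly} over each entire face of the box rather than merely along a ray. This is exactly where $p>6$ enters: it guarantees that the order-$s^p\log s$ contribution dominates the order-$s^4$ interactions. By contrast, once Lemma \ref{l3} is in hand the uniqueness step is essentially immediate.
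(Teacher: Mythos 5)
Your proposal is correct and follows essentially the same route as the paper: both define $g_1(s,t)=(J_\lambda'(su^++tu^-),su^+)$ and $g_2(s,t)=(J_\lambda'(su^++tu^-),tu^-)$, obtain the sign conditions on the faces of a box $[r,R]^2$ (using $K_V(u)\le 0$ and the dominance of the $s^p\log s^2$ term for $p>6$), apply Miranda's theorem for existence, and deduce uniqueness from Lemma \ref{l3}. Your explicit uniform face estimates and the ``strict global maximizer'' phrasing of uniqueness are minor streamlinings of the paper's argument (the latter even covers the degenerate case $s_1=s_2$, $t_1\neq t_2$ that the paper's contradiction setup formally omits), but the approach is the same.
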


\begin{proof}
    
For $s, t>0$, let
\begin{eqnarray*}
 g_{1}(s, t) = \left(J_{\lambda}^{\prime}\left(s u^{+}+t u^{-}\right),su^{+}\right),\qquad g_{2}(s, t) =\left(J_{\lambda}^{\prime}\left(s u^{+}+t u^{-}\right),t u^{-}\right).  
\end{eqnarray*}
By (\ref{2.5}) and (\ref{2.6}), we get respectively that
\begin{equation}\label{2.7}
\begin{aligned}
g_1(s,t) =&(J_{\lambda}^{\prime}(su^++tu^-),su^{+})\\=&(J_{\lambda}^{\prime}\left(su^{+}\right),su^{+})-\frac{a}{2}stK_{V}(u)+\frac{b}{2}s^2t^2K^2_V(u)+b\left\|\nabla (su^{+})\right\|_{2}^{2}\left\|\nabla (tu^{-})\right\|_{2}^{2}\\&-\frac{b}{2}stK_V(u)\left(\left\|\nabla (su^{+})\right\|_{2}^{2}+\left\|\nabla (tu^{-})\right\|_{2}^{2}\right)
-bstK_V(u)\|\nabla (su^{+})\|^2_2\\=& s^{2}\|u^{+}\|_{\mathcal{H}_\lambda}^{2}+b s^{4}\left\|\nabla u^{+}\right\|_{2}^{4}-\int_{V}\left|s u^{+}\right|^{p} \log \left(s u^{+}\right)^{2}\,d\mu-\frac{a}{2}stK_{V}(u)+\frac{b}{2}s^2t^2K^2_V(u) 
\\&+b s^{2} t^{2}\left\|\nabla u^{+}\right\|_{2}^{2}\left\|\nabla u^{-}\right\|_{2}^{2}-\frac{b}{2}stK_V(u)\left(s^2\left\|\nabla u^{+}\right\|_{2}^{2}+t^2\left\|\nabla u^{-}\right\|_{2}^{2}\right)
-bs^3tK_V(u)\|\nabla u^{+}\|^2_2,
\end{aligned}
\end{equation}
and
\begin{equation}\label{2.8}
\begin{aligned}
g_2(s,t) =&(J_{\lambda}^{\prime}(su^++tu^-),tu^{-})\\=&(J_{\lambda}^{\prime}\left(tu^{-}\right),tu^{-})-\frac{a}{2}stK_{V}(u)+\frac{b}{2}s^2t^2K^2_V(u)+b\left\|\nabla (su^{+})\right\|_{2}^{2}\left\|\nabla (tu^{-})\right\|_{2}^{2}\\&-\frac{b}{2}stK_V(u)\left(\left\|\nabla (su^{+})\right\|_{2}^{2}+\left\|\nabla (tu^{-})\right\|_{2}^{2}\right)
-bstK_V(u)\|\nabla (tu^{-})\|^2_2\\=& t^{2}\|u^{-}\|_{\mathcal{H}_\lambda}^{2}+b t^{4}\left\|\nabla u^{-}\right\|_{2}^{4}-\int_{V}\left|t u^{-}\right|^{p} \log \left(t u^{-}\right)^{2}\,d\mu-\frac{a}{2}stK_{V}(u)+\frac{b}{2}s^2t^2K^2_V(u) 
\\&+b s^{2} t^{2}\left\|\nabla u^{+}\right\|_{2}^{2}\left\|\nabla u^{-}\right\|_{2}^{2}-\frac{b}{2}stK_V(u)\left(s^2\left\|\nabla u^{+}\right\|_{2}^{2}+t^2\left\|\nabla u^{-}\right\|_{2}^{2}\right)
-bst^3K_V(u)\|\nabla u^{-}\|^2_2.
\end{aligned}
\end{equation}
Let $t=s$ in (\ref{2.7}) and (\ref{2.8}), then
$$
\begin{aligned}
g_1(s,s) =& s^{2}\|u^{+}\|_{\mathcal{H}_\lambda}^{2}+b s^{4}\left\|\nabla u^{+}\right\|_{2}^{4}-s^p\int_{V}\left| u^{+}\right|^{p} \log \left( u^{+}\right)^{2}\,d\mu-s^p\log s^2\int_{V}\left|u^{+}\right|^{p}\,d\mu-\frac{a}{2}s^2K_{V}(u)\\&+\frac{b}{2}s^4K^2_V(u) 
+b s^{4} \left\|\nabla u^{+}\right\|_{2}^{2}\left\|\nabla u^{-}\right\|_{2}^{2}-\frac{b}{2}s^4K_V(u)\left(\left\|\nabla u^{+}\right\|_{2}^{2}+\left\|\nabla u^{-}\right\|_{2}^{2}\right)
-bs^4K_V(u)\|\nabla u^{+}\|^2_2,
\end{aligned}
$$
and
$$
\begin{aligned}
g_2(s,s) =& s^{2}\|u^{-}\|_{\mathcal{H}_\lambda}^{2}+b s^{4}\left\|\nabla u^{-}\right\|_{2}^{4}-s^p\int_{V}\left| u^{-}\right|^{p} \log \left( u^{-}\right)^{2}\,d\mu-s^p\log s^2\int_{V}\left|u^{-}\right|^{p}\,d\mu-\frac{a}{2}s^2K_{V}(u)\\&+\frac{b}{2}s^4K^2_V(u) 
+b s^{4} \left\|\nabla u^{+}\right\|_{2}^{2}\left\|\nabla u^{-}\right\|_{2}^{2}-\frac{b}{2}s^4K_V(u)\left(\left\|\nabla u^{+}\right\|_{2}^{2}+\left\|\nabla u^{-}\right\|_{2}^{2}\right)
-bs^4K_V(u)\|\nabla u^{-}\|^2_2.
\end{aligned}
$$
Since $p>6$ and $K_V(u)< 0$, one gets easily that $ g_{1}(s, s)>0$ and $g_{2}(s, s)>0$ for $s>0$ small enough and $g_{1}(s, s)<0$ and $g_{2}(s, s)<0$ for $s>0$ large enough. Therefore, there exist $r$ and $R$ with $0<r<R$ such that
\begin{equation}\label{0.5}
g_{1}(r, r)>0,\quad g_{2}(r, r)>0,\quad g_{1}(R, R)<0,\quad g_{2}(R, R)<0.
\end{equation}
By (\ref{2.7}), (\ref{2.8}) and (\ref{0.5}), we obtain that
$$
\begin{aligned}
& g_{1}(r, t)>0, \qquad g_{1}(R, t)<0,\quad t \in[r, R], \\
& g_{2}(s, r)>0,\qquad g_{2}(s, R)<0,\quad s \in[r, R].
\end{aligned}
$$
By the Miranda's theorem \cite{K}, there exist $s_{u}, t_{u} \in(r, R)$ such that $g_{1}\left(s_{u}, t_{u}\right)=g_{2}\left(s_{u}, t_{u}\right)=0$, which implies that $s_{u} u^{+}+t_{u} u^{-} \in \mathcal{M}_{\lambda}$.

Next, we prove the uniqueness of the pair $\left(s_{u}, t_{u}\right)$. In fact, by contradiction, suppose there exist $\left(s_{1}, t_{1}\right)$ and $\left(s_{2}, t_{2}\right)$ with $s_1\neq s_2$ and $t_1\neq t_2$ such that $s_{i} u^{+}+t_{i} u^{-} \in \mathcal{M}_{\lambda}$ with $i=1, 2$.

Let $s=\frac{s_2}{s_1}$ and $t=\frac{t_2}{t_1}$. Clearly $s\neq 1$ and $t\neq 1$. Then by Lemma \ref{l3}, we have
\begin{equation}\label{3.1}
J_\lambda(s_2u^++t_2u^-) =J_\lambda \left(s(s_1u^+)+t(t_1u^-)\right)<J_\lambda (s_1u^++t_1u^-).
\end{equation}
Similarly, we have
\begin{equation}\label{3.2}
  J_\lambda(s_1u^++t_1u^-) =J_\lambda \left(\frac{1}{s}(s_2u^+)+\frac{1}{t}(t_2u^-)\right)<J_\lambda (s_2u^++t_2u^-).  
\end{equation}
Clearly, (\ref{3.1}) contradicts (\ref{3.2}). Hence there exists a unique positive number pair $(s_u, t_u)$ such that $s_uu^++t_uu^-\in\mathcal{M}_\lambda.$

\end{proof}

\begin{lm}\label{l5}
Let $u \in \mathcal{H}_\lambda$ with $u^{ \pm} \neq 0$ such that $(J_{\lambda}^{\prime}(u),u^{ \pm}) \leq 0$. Then the unique pair $\left(s_{u}, t_{u}\right)$ obtained in Lemma \ref{4} satisfies $s_{u}, t_{u} \in(0,1]$. In particular, the $"="$ holds if and only if $s_u=t_u=1.$
\end{lm}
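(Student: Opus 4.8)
The plan is to reduce everything to the one-variable behaviour of the functions $g_1,g_2$ introduced in the proof of Lemma~\ref{4}, and then exploit two monotonicity facts. First I would record that, by their definitions, $g_1(1,1)=(J_\lambda'(u),u^+)$ and $g_2(1,1)=(J_\lambda'(u),u^-)$, so the hypothesis reads $g_1(1,1)\le 0$ and $g_2(1,1)\le 0$; moreover the pair $(s_u,t_u)$ from Lemma~\ref{4} is characterised by $g_1(s_u,t_u)=g_2(s_u,t_u)=0$. Since the roles of $u^+$ and $u^-$ (and correspondingly of $s,t$ and of $g_1,g_2$) are symmetric, I may assume without loss of generality that $t_u\le s_u$ and prove $s_u\le 1$; then automatically $t_u\le s_u\le 1$. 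The symmetric case $s_u\le t_u$ is handled by the same argument applied to $g_2$.

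The first monotonicity fact is that, for fixed $s>0$, the map $t\mapsto g_1(s,t)$ is strictly increasing. This I would read off directly from (\ref{2.7}): differentiating in $t$ and collecting terms, every resulting summand carries either a factor $-K_V(u)>0$ (recall $K_V(u)<0$ since $u^{\pm}\ne 0$) or the factor $K_V^2(u)>0$, multiplied by positive powers of $s,t$, so $\partial_t g_1(s,t)>0$. Applying this with $s=s_u$ and using $t_u\le s_u$ gives $0=g_1(s_u,t_u)\le g_1(s_u,s_u)$, i.e. $g_1(s_u,s_u)\ge 0$.

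The second, and decisive, fact concerns the diagonal function $s\mapsto g_1(s,s)$. From the expression for $g_1(s,s)$ already computed in the proof of Lemma~\ref{4} I would write it as $g_1(s,s)=As^2+Bs^4-Cs^p-Ds^p\log s^2$, where $A=\|u^+\|_{\mathcal H_\lambda}^2-\tfrac a2 K_V(u)>0$, $D=\int_V|u^+|^p\,d\mu>0$, $C=\int_V|u^+|^p\log(u^+)^2\,d\mu$, and $B>0$ collects the remaining Kirchhoff terms (its strict positivity is guaranteed by the summand $\tfrac b2 K_V^2(u)$). Dividing by $s^p$ gives $s^{-p}g_1(s,s)=As^{2-p}+Bs^{4-p}-C-D\log s^2$, and since $p>6$ each of the exponents $2-p$ and $4-p$ is negative while $A,B,D>0$; hence $s\mapsto s^{-p}g_1(s,s)$ is strictly decreasing on $(0,\infty)$, tends to $+\infty$ as $s\to 0^+$ and to $-\infty$ as $s\to\infty$. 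Therefore $g_1(s,s)$ vanishes at a unique $s^{\ast}>0$, being positive on $(0,s^{\ast})$ and negative on $(s^{\ast},\infty)$. Now $g_1(s_u,s_u)\ge 0$ forces $s_u\le s^{\ast}$, while $g_1(1,1)\le 0$ forces $s^{\ast}\le 1$; combining these, $s_u\le 1$, and thus $t_u\le s_u\le 1$.

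Finally, for the equality statement I would argue via the uniqueness in Lemma~\ref{4}: if $s_u=t_u=1$ then $s_uu^++t_uu^-=u\in\mathcal M_\lambda$, which is precisely $(J_\lambda'(u),u^+)=(J_\lambda'(u),u^-)=0$; conversely, if both inequalities in the hypothesis are in fact equalities then $u\in\mathcal M_\lambda$, so $(1,1)$ is an admissible pair and uniqueness gives $(s_u,t_u)=(1,1)$. The only genuinely delicate point is carrying out the two strict monotonicities cleanly—in particular isolating the sign of the coefficient $B$ and confirming that the single sign change of $g_1(s,s)$ persists despite the logarithmic term—both of which hinge on $K_V(u)<0$ and on the assumption $p>6$.
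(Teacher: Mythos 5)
Your proof is correct and follows essentially the same route as the paper: your monotonicity of $g_1(s,\cdot)$ in $t$ is precisely the paper's replacement of $t_u$ by $s_u$ using $K_V(u)<0$ in (\ref{3.3}), and your single-crossing analysis of $h(s)=s^{-p}g_1(s,s)$ repackages the paper's step of multiplying the hypothesis inequality by $-s_u^p$ and deriving a sign contradiction for $s_u>1$, both resting on $p>6$ and $K_V(u)<0$. One small plus: you actually prove the equality clause (``$=$'' iff $s_u=t_u=1$) via the uniqueness in Lemma \ref{4}, a point the paper's proof states but never argues.
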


\begin{proof}
By Lemma \ref{4}, there exists a unique positive number pair $(s_u,t_u)$ such that $s_uu^++t_uu^-\in \mathcal{M}_\lambda$. Without loss of generality, we assume that $0<t_{u} \leq s_{u}$. By (\ref{2.5}), we get that
\begin{equation}\label{3.4}
\begin{aligned}
&(J_{\lambda}^{\prime}(s_uu^++t_uu^-),s_uu^{+})\\=&s_u^{2}\|u^{+}\|_{\mathcal{H}_\lambda}^{2}+b s_u^{4}\left\|\nabla u^{+}\right\|_{2}^{4}-\int_{V}\left|s_u u^{+}\right|^{p} \log \left(s_u u^{+}\right)^{2}\,d\mu-\frac{a}{2}s_ut_uK_{V}(u)+\frac{b}{2}s_u^2t_u^2K^2_V(u) 
\\&+b s_u^{2} t_u^{2}\left\|\nabla u^{+}\right\|_{2}^{2}\left\|\nabla u^{-}\right\|_{2}^{2}-\frac{b}{2}s_ut_uK_V(u)\left(s_u^2\left\|\nabla u^{+}\right\|_{2}^{2}+t_u^2\left\|\nabla u^{-}\right\|_{2}^{2}\right)
-bs_u^3t_uK_V(u)\|\nabla u^{+}\|^2_2.
\end{aligned}
\end{equation}
Since $s_{u} u^{+}+t_{u} u^{-} \in \mathcal{M}_{\lambda}$, we have
\begin{equation}\label{3.3}
\begin{aligned}
\int_{V}\left|s_u u^{+}\right|^{p} \log \left(s_u u^{+}\right)^{2}\,d\mu=&s_u^{2}\|u^{+}\|_{\mathcal{H}_\lambda}^{2}+b s_u^{4}\left\|\nabla u^{+}\right\|_{2}^{4}-\frac{a}{2}s_ut_uK_{V}(u)+\frac{b}{2}s_u^2t_u^2K^2_V(u) 
\\&+b s_u^{2} t_u^{2}\left\|\nabla u^{+}\right\|_{2}^{2}\left\|\nabla u^{-}\right\|_{2}^{2}-\frac{b}{2}s_ut_uK_V(u)\left(s_u^2\left\|\nabla u^{+}\right\|_{2}^{2}+t_u^2\left\|\nabla u^{-}\right\|_{2}^{2}\right)
\\&-bs_u^3t_uK_V(u)\|\nabla u^{+}\|^2_2\\ \leq &s_u^{2}\|u^{+}\|_{\mathcal{H}_\lambda}^{2}+b s_u^{4}\left\|\nabla u^{+}\right\|_{2}^{4}-\frac{a}{2}s^2_uK_{V}(u)+\frac{b}{2}s_u^4K^2_V(u) 
\\&+b s_u^{4} \left\|\nabla u^{+}\right\|_{2}^{2}\left\|\nabla u^{-}\right\|_{2}^{2}-\frac{b}{2}s^4_uK_V(u)\left(\left\|\nabla u^{+}\right\|_{2}^{2}+\left\|\nabla u^{-}\right\|_{2}^{2}\right)
\\&-bs_u^4K_V(u)\|\nabla u^{+}\|^2_2,
\end{aligned}    
\end{equation}
where we have used the fact that $K_V(u)<0$.

Let $s=t=1$ in (\ref{3.4}), we get that
$$
\begin{aligned}
(J_{\lambda}^{\prime}(u),u^{+})=&\|u^{+}\|_{\mathcal{H}_\lambda}^{2}+b\left\|\nabla u^{+}\right\|_{2}^{4}-\int_{V}\left|u^{+}\right|^{p} \log \left(u^{+}\right)^{2}\,d\mu-\frac{a}{2}K_{V}(u)+\frac{b}{2}K^2_V(u) 
\\&+b\left\|\nabla u^{+}\right\|_{2}^{2}\left\|\nabla u^{-}\right\|_{2}^{2}-\frac{b}{2}K_V(u)\left(\left\|\nabla u^{+}\right\|_{2}^{2}+\left\|\nabla u^{-}\right\|_{2}^{2}\right)
-bK_V(u)\|\nabla u^{+}\|^2_2.
\end{aligned}
$$
Since $(J_{\lambda}^{\prime}(u),u^{+})\leq 0$, by the above equality, we get that
$$
\begin{aligned}
\int_{V}\left|u^{+}\right|^{p} \log \left(u^{+}\right)^{2}\,d\mu\geq &\|u^{+}\|_{\mathcal{H}_\lambda}^{2}+b\left\|\nabla u^{+}\right\|_{2}^{4}-\frac{a}{2}K_{V}(u)+\frac{b}{2}K^2_V(u) 
+b\left\|\nabla u^{+}\right\|_{2}^{2}\left\|\nabla u^{-}\right\|_{2}^{2}\\&-\frac{b}{2}K_V(u)\left(\left\|\nabla u^{+}\right\|_{2}^{2}+\left\|\nabla u^{-}\right\|_{2}^{2}\right)
-bK_V(u)\|\nabla u^{+}\|^2_2.
\end{aligned}
$$
Multiplying the previous inequality by $-s_u^p$, then
\begin{equation}\label{3.5}
 \begin{aligned}
-s_u^p\int_{V}\left|u^{+}\right|^{p} \log \left(u^{+}\right)^{2}\,d\mu\leq &-s_u^p\|u^{+}\|_{\mathcal{H}_\lambda}^{2}-bs_u^p\left\|\nabla u^{+}\right\|_{2}^{4}+\frac{a}{2}s_u^pK_{V}(u)-\frac{b}{2}s_u^pK^2_V(u) 
\\&-bs_u^p\left\|\nabla u^{+}\right\|_{2}^{2}\left\|\nabla u^{-}\right\|_{2}^{2}+\frac{b}{2}s_u^pK_V(u)\left(\left\|\nabla u^{+}\right\|_{2}^{2}+\left\|\nabla u^{-}\right\|_{2}^{2}\right)
\\&+bs_u^pK_V(u)\|\nabla u^{+}\|^2_2.
\end{aligned}   
\end{equation}
Combing (\ref{3.3}) and (\ref{3.5}), we obtain that
$$
\begin{aligned}
s_u^p\log s_u^{2}\int_{V}\left|u^{+}\right|^{p}\,d\mu\leq &(s_u^2-s_u^p)\|u^{+}\|_{\mathcal{H}_\lambda}^{2}+b(s_u^4-s_u^p)\left\|\nabla u^{+}\right\|_{2}^{4}-\frac{a}{2}(s_u^2-s_u^p)K_{V}(u)+\frac{b}{2}(s_u^4-s_u^p)K^2_V(u) 
\\&+b(s_u^4-s_u^p)\left\|\nabla u^{+}\right\|_{2}^{2}\left\|\nabla u^{-}\right\|_{2}^{2}-\frac{b}{2}(s_u^4-s_u^p)K_V(u)\left(\left\|\nabla u^{+}\right\|_{2}^{2}+\left\|\nabla u^{-}\right\|_{2}^{2}\right)
\\&-b(s_u^4-s_u^p)K_V(u)\|\nabla u^{+}\|^2_2.
\end{aligned} 
$$
If $s_u>1$, then the left hand side of the above inequality is a positive constant, while the right hand side is a negative constant. This is impossible. Hence $0<t_u\leq s_u\leq 1.$ 
\end{proof}

Similarly, we have the following results.
\begin{lm}\label{l7}
If $u \in \mathcal{H}_{0}^{1}(\Omega) $ with $u^{ \pm} \neq 0$, then there exists a unique positive number pair $\left(s_{u}, t_{u}\right)$ such that $s_{u} u^{+}+t_{u} u^{-} \in \mathcal{M}_{\Omega}$.
\end{lm}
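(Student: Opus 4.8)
The plan is to mirror the proof of Lemma \ref{4}, replacing the functional $J_\lambda$ and the quantity $K_V$ by their counterparts $J_\Omega$ and $K_\Omega$ on the finite well. For $s,t>0$ I would set
$$
g_1(s,t)=\bigl(J_\Omega'(su^{+}+tu^{-}),su^{+}\bigr),\qquad g_2(s,t)=\bigl(J_\Omega'(su^{+}+tu^{-}),tu^{-}\bigr),
$$
and expand each one by means of parts (ii) and (iii) of the Corollary for $\mathcal{H}_0^1(\Omega)$ stated above, exactly as formulas (\ref{2.7})--(\ref{2.8}) were derived in the proof of Lemma \ref{4}. The pair $(s_u,t_u)$ we are after is precisely a common zero of $g_1$ and $g_2$, since $g_1(s_u,t_u)=g_2(s_u,t_u)=0$ is equivalent to $s_uu^{+}+t_uu^{-}\in\mathcal{M}_\Omega$.

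For existence of such a zero I would invoke Miranda's theorem \cite{K} on a square $[r,R]\times[r,R]$. Setting $t=s$ and examining the diagonal, the terms $s^2\|u^{\pm}\|_{\mathcal{H}_0^1(\Omega)}^2$ dominate as $s\to0^{+}$, while, because $p>6>4$, the logarithmic terms $-s^{p}\log s^{2}\int_{\Omega}|u^{\pm}|^{p}\,d\mu$ dominate the quartic Kirchhoff terms as $s\to\infty$; hence $g_1(s,s),g_2(s,s)>0$ for $s$ small and $g_1(s,s),g_2(s,s)<0$ for $s$ large, which produces $0<r<R$ with the analogue of (\ref{0.5}). The structural point is that $K_\Omega(u)\le 0$, so every $t$-dependent term of $g_1(s,t)$ and every $s$-dependent term of $g_2(s,t)$ carries a favorable sign; consequently $g_1(s,\cdot)$ is nondecreasing on $[r,R]$ and $g_2(\cdot,t)$ is nondecreasing on $[r,R]$. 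This monotonicity upgrades the diagonal inequalities to the boundary sign conditions
$$
g_1(r,t)>0,\quad g_1(R,t)<0,\quad g_2(s,r)>0,\quad g_2(s,R)<0,\qquad s,t\in[r,R],
$$
and Miranda's theorem then yields $(s_u,t_u)\in(r,R)\times(r,R)$ with $g_1(s_u,t_u)=g_2(s_u,t_u)=0$.

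For uniqueness I would first record the $\Omega$-analogue of Lemma \ref{l3}, namely $J_\Omega(u)>J_\Omega(su^{+}+tu^{-})$ for every $u\in\mathcal{M}_\Omega$ and $(s,t)\neq(1,1)$; this follows verbatim from part (i) of the Corollary for $\mathcal{H}_0^1(\Omega)$ together with the same two elementary facts used in Lemma \ref{l3}, that $2(1-x^{p})+px^{p}\log x^{2}>0$ for $x\neq1$ and that $x\mapsto(1-a^{x})/x$ is strictly decreasing. Granting this, if $(s_1,t_1)$ and $(s_2,t_2)$ both sent $u$ into $\mathcal{M}_\Omega$, then writing $s=s_2/s_1$, $t=t_2/t_1$ and applying the strict inequality once to $s_1u^{+}+t_1u^{-}$ and once to $s_2u^{+}+t_2u^{-}$ would give two opposing strict inequalities, exactly the contradiction reached after (\ref{3.1}) in the proof of Lemma \ref{4}. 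I expect the only delicate point to be the monotonicity step in the existence argument: one must verify that, with $K_\Omega(u)\le 0$ (possibly not strictly negative), the off-diagonal terms still have the signs needed to keep $g_1(r,t)$ positive and drive $g_1(R,t)$ negative across the entire edge $t\in[r,R]$, and symmetrically for $g_2$.
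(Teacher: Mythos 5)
Your proposal is correct and follows essentially the same route as the paper, which proves Lemma \ref{l7} exactly by transplanting the proof of Lemma \ref{4} (the expansions from the $\mathcal{H}_0^1(\Omega)$ corollary, diagonal sign analysis, Miranda's theorem, and uniqueness via the $\Omega$-analogue of Lemma \ref{l3}). Your flagged ``delicate point'' is handled correctly as you suspect: with $K_\Omega(u)\le 0$ the monotonicity of $g_1(s,\cdot)$ and $g_2(\cdot,t)$ is merely non-strict, but since the diagonal inequalities $g_i(r,r)>0>g_i(R,R)$ are strict (they rest only on $s^2$-dominance near $0$ and on $-s^p\log s^2$ dominating the quartic terms for $p>6$, neither of which needs $K_\Omega<0$), the boundary sign conditions for Miranda's theorem still follow.
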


\begin{lm}\label{l6}
 Let $u \in \mathcal{H}_{0}^{1}(\Omega)$ with $u^{ \pm} \neq 0$ such that $(J_{\Omega}^{\prime}(u),u^{ \pm}) \leq 0$. Then the unique pair $\left(s_{u}, t_{u}\right)$ obtained in Lemma \ref{l7} satisfies $s_{u}, t_{u} \in(0,1]$. In particular, the $"="$ holds if and only if $s_u=t_u=1.$
\end{lm}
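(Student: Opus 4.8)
The plan is to mirror the proof of Lemma \ref{l5} line by line, replacing $\mathcal{H}_\lambda$, $J_\lambda$, $K_V$, $\mathcal{M}_\lambda$ and the norm $\|\nabla\cdot\|_2$ by their $\Omega$-counterparts $\mathcal{H}_0^1(\Omega)$, $J_\Omega$, $K_\Omega$, $\mathcal{M}_\Omega$ and $\|\nabla\cdot\|_{\ell^2(\Omega\cup\partial\Omega)}$, and invoking Lemma \ref{l7} in place of Lemma \ref{4} together with the $\Omega$-version of the splitting formula (\ref{2.5}) (the part (ii) of the second Corollary). First I would apply Lemma \ref{l7} to obtain the unique pair $(s_u,t_u)$ with $s_u u^{+}+t_u u^{-}\in\mathcal{M}_\Omega$, and without loss of generality assume $0<t_u\leq s_u$.

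Next I would expand $(J_\Omega^{\prime}(s_u u^{+}+t_u u^{-}),s_u u^{+})$ via the $\Omega$-analogue of (\ref{2.5}) and set it equal to zero, since $s_u u^{+}+t_u u^{-}\in\mathcal{M}_\Omega$. This produces an exact identity for $\int_\Omega |s_u u^{+}|^{p}\log(s_u u^{+})^2\,d\mu$ in terms of $s_u,t_u$, the norms of $u^{\pm}$, and $K_\Omega(u)$. Because $K_\Omega(u)\leq 0$ (in fact $K_\Omega(u)<0$, as $\Omega$ is connected and $u^{\pm}\neq 0$ forces an edge joining the supports of $u^{+}$ and $u^{-}$) and $t_u\leq s_u$, I would replace $t_u$ by $s_u$ throughout the $K_\Omega$-weighted terms; each such term carries $-K_\Omega(u)\geq 0$, so the substitution only enlarges the right-hand side and yields an upper bound whose terms involve only $s_u^2$ and $s_u^4$. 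Separately, the hypothesis $(J_\Omega^{\prime}(u),u^{+})\leq 0$, evaluated through the $s=t=1$ case of the same formula, gives a lower bound on $\int_\Omega |u^{+}|^{p}\log(u^{+})^2\,d\mu$. Multiplying this lower bound by $-s_u^{p}$ and adding it to the upper bound cancels the $\log(u^{+})^2$ integrals and isolates $s_u^{p}\log s_u^{2}\int_\Omega |u^{+}|^{p}\,d\mu$ on the left.

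The resulting inequality reads $s_u^{p}\log s_u^{2}\int_\Omega |u^{+}|^{p}\,d\mu\leq (s_u^2-s_u^{p})\|u^{+}\|_{\mathcal{H}_0^1(\Omega)}^2+b(s_u^4-s_u^{p})(\cdots)+\cdots$, where every coefficient $(s_u^2-s_u^{p})$ and $(s_u^4-s_u^{p})$ is nonpositive once $s_u\geq 1$ and $p>6$, and the $K_\Omega(u)$ contributions are nonpositive by the sign of $K_\Omega(u)$. Hence, if $s_u>1$, the left-hand side is strictly positive while the right-hand side is nonpositive, a contradiction. This forces $s_u\leq 1$, so $0<t_u\leq s_u\leq 1$, and the equality case $s_u=t_u=1$ follows exactly as in Lemma \ref{l5}. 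The one point that needs care is the sign bookkeeping after the substitution $t_u\mapsto s_u$: one must verify that the Kirchhoff cross-terms (such as $b s_u^4\|\nabla u^{+}\|_{\ell^2(\Omega\cup\partial\Omega)}^2\|\nabla u^{-}\|_{\ell^2(\Omega\cup\partial\Omega)}^2$ and the $K_\Omega$-weighted terms) all retain the correct sign, which is precisely what $p>6$ and $K_\Omega(u)<0$ guarantee.
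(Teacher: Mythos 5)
Your proposal is correct and takes essentially the same approach as the paper: the paper proves Lemma \ref{l6} precisely by this transposition of the proof of Lemma \ref{l5} to the $\Omega$-setting (invoking Lemma \ref{l7} and the $\Omega$-analogue of (\ref{2.5})), and your sign bookkeeping after the substitution $t_u\mapsto s_u$ reproduces the steps (\ref{3.3})--(\ref{3.5}). One minor caveat: your parenthetical assertion that connectedness of $\Omega$ forces $K_\Omega(u)<0$ is not quite right (the supports of $u^{+}$ and $u^{-}$ could be separated by zeros of $u$), but this is harmless, since your chain of inequalities only uses $K_\Omega(u)\le 0$ together with $u^{+}\not\equiv 0$.
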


Now we prove that the minimizer of $J_{\lambda}$ on $\mathcal{M}_{\lambda}$ can be achieved.

\begin{lm}\label{l9}
Let $(h_1)$-$(h_2)$ hold. Then $m_{\lambda}>0$ is achieved.
\end{lm}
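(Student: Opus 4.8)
The plan is to combine the fibering analysis of Lemmas \ref{4} and \ref{l5} with the compactness furnished by Lemma \ref{l2}. First I would record that every $u\in\mathcal{M}_\lambda$ in fact lies in $\mathcal{N}_\lambda$: by the splitting (\ref{0.3}), $(J_\lambda'(u),u)=(J_\lambda'(u),u^+)+(J_\lambda'(u),u^-)=0$. On $\mathcal{N}_\lambda$ the logarithmic term cancels in the combination $J_\lambda(u)-\frac1p(J_\lambda'(u),u)$, so that
\[
J_\lambda(u)=\Big(\frac12-\frac1p\Big)\|u\|_{\mathcal{H}_\lambda}^2+b\Big(\frac14-\frac1p\Big)\|\nabla u\|_2^4+\frac{2}{p^2}\int_V|u|^p\,d\mu=:\Phi(u),
\]
where all three coefficients are positive because $p>6$. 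To get $m_\lambda>0$ I would bound $\|u^\pm\|_{\mathcal{H}_\lambda}$ below uniformly: testing the constraint $(J_\lambda'(u),u^+)=0$ and discarding the non-negative terms built from $K_V(u)\le0$ (see the $s=t=1$ expansion in Lemma \ref{l5}) gives $\|u^+\|_{\mathcal{H}_\lambda}^2\le\int_V|u^+|^p\log(u^+)^2\,d\mu$. Multiplying (\ref{1.2}) by $|t|$, integrating, and using the embeddings $\|u^+\|_2,\|u^+\|_q\le C\|u^+\|_{\mathcal{H}_\lambda}$ of Lemma \ref{l2} with a fixed $q>p$, one finds $\|u^+\|_{\mathcal{H}_\lambda}^2\le\frac12\|u^+\|_{\mathcal{H}_\lambda}^2+C_\varepsilon\|u^+\|_{\mathcal{H}_\lambda}^q$, hence $\|u^+\|_{\mathcal{H}_\lambda}\ge\delta>0$, and likewise for $u^-$. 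Since $K_V(u)\le0$ forces $\|u\|_{\mathcal{H}_\lambda}^2\ge\|u^+\|_{\mathcal{H}_\lambda}^2+\|u^-\|_{\mathcal{H}_\lambda}^2$ via Proposition \ref{o}(i), the displayed formula yields $m_\lambda\ge(\frac12-\frac1p)\,2\delta^2>0$.

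For the attainment I would take a minimizing sequence $\{u_k\}\subset\mathcal{M}_\lambda$, $J_\lambda(u_k)\to m_\lambda$. The reduced formula $J_\lambda=\Phi$ shows $\{u_k\}$ is bounded in $\mathcal{H}_\lambda$, so Lemma \ref{l2} gives a subsequence with $u_k\rightharpoonup u_\lambda$ weakly, pointwise, and strongly in every $\ell^q(V)$, $q\in[2,\infty]$. The growth bound (\ref{1.2}) together with this $\ell^q$-convergence then lets me pass to the limit in the logarithmic integrals, $\int_V|u_k^\pm|^p\log(u_k^\pm)^2\,d\mu\to\int_V|u_\lambda^\pm|^p\log(u_\lambda^\pm)^2\,d\mu$, and in the bilinear quantities $K_V(u_k)\to K_V(u_\lambda)$. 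Combining with the lower bound from Step one, $\int_V|u_k^+|^p\log(u_k^+)^2\,d\mu\ge\|u_k^+\|_{\mathcal{H}_\lambda}^2\ge\delta^2$, and passing to the limit gives $\int_V|u_\lambda^+|^p\log(u_\lambda^+)^2\,d\mu\ge\delta^2>0$, so $u_\lambda^+\not\equiv0$, and symmetrically $u_\lambda^-\not\equiv0$.

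Since $u_\lambda^\pm\ne0$, Lemma \ref{4} supplies a unique pair $(s_\lambda,t_\lambda)$ with $w:=s_\lambda u_\lambda^++t_\lambda u_\lambda^-\in\mathcal{M}_\lambda$, whence $J_\lambda(w)\ge m_\lambda$; the goal is the reverse inequality. To that end I would first check $(J_\lambda'(u_\lambda),u_\lambda^\pm)\le0$: writing the identity $(J_\lambda'(u_k),u_k^+)=0$ as ``norm plus quartic plus $K_V$-terms $=$ logarithmic integral'', taking $\liminf$ and using weak lower semicontinuity of the norm and gradient terms while the $K_V$-terms and the logarithmic integral converge, yields $(J_\lambda'(u_\lambda),u_\lambda^+)\le0$ (and similarly for $u_\lambda^-$). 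Lemma \ref{l5} then forces $s_\lambda,t_\lambda\in(0,1]$. Finally, because $s_\lambda,t_\lambda\le1$ and $K_V(u_\lambda)\le0$, Proposition \ref{o} gives $\|w\|_{\mathcal{H}_\lambda}\le\|u_\lambda\|_{\mathcal{H}_\lambda}$, $\|\nabla w\|_2\le\|\nabla u_\lambda\|_2$ and $\int_V|w|^p\,d\mu\le\int_V|u_\lambda|^p\,d\mu$, so that $J_\lambda(w)=\Phi(w)\le\Phi(u_\lambda)\le\liminf_k\Phi(u_k)=\liminf_kJ_\lambda(u_k)=m_\lambda$, using weak lower semicontinuity of $\Phi$ (the term $\int_V|\cdot|^p$ even converges along $\{u_k\}$). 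Hence $J_\lambda(w)=m_\lambda$ and $m_\lambda$ is achieved by $w\in\mathcal{M}_\lambda$.

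The main obstacle is the compactness of the logarithmic nonlinearity and of the coupling term: I must justify that $\int_V|u_k^\pm|^p\log(u_k^\pm)^2\,d\mu\to\int_V|u_\lambda^\pm|^p\log(u_\lambda^\pm)^2\,d\mu$ and $K_V(u_k)\to K_V(u_\lambda)$, which requires feeding the pointwise and $\ell^q$ convergence of Lemma \ref{l2} into a dominated-convergence argument on the lattice controlled by (\ref{1.2}). Once these convergences and the weak lower semicontinuity are secured, the sign of $(J_\lambda'(u_\lambda),u_\lambda^\pm)$ and the final comparison follow mechanically.
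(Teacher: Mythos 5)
Your proposal is correct and follows essentially the same route as the paper: boundedness of a minimizing sequence via $J_\lambda-\frac1p(J_\lambda',\cdot)$, the compact embedding of Lemma \ref{l2}, a uniform lower bound on $\|u_k^\pm\|_{\mathcal{H}_\lambda}$ from the constraint and (\ref{1.2}), the inequality $(J_\lambda'(u_\lambda),u_\lambda^\pm)\le 0$ in the limit, and projection back onto $\mathcal{M}_\lambda$ via Lemmas \ref{4}--\ref{l5} followed by the comparison chain. The only cosmetic differences are that you obtain $m_\lambda>0$ from the uniform bound $\|u_k^\pm\|_{\mathcal{H}_\lambda}\ge\delta$ rather than from $\frac{2}{p^2}\int_V|u_\lambda|^p\,d\mu>0$, and you pass to the limit in the logarithmic integrals by generalized dominated convergence where the paper splits the integrand into positive and negative parts and uses Fatou's lemma together with weak lower semicontinuity; both are valid given the strong $\ell^q(V)$ convergence.
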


\begin{proof} 
Let $\{u_k\}\subset\mathcal{M}_\lambda$ be a minimizing sequence satisfying
$$\lim\limits_{k\rightarrow\infty}J_\lambda(u_k)=m_\lambda.$$
Since $\{u_k\}\subset\mathcal{M}_\lambda$, by (\ref{0.3}), we get that $$(J'_\lambda(u_k),u_k)=(J'_\lambda(u_k),u_k^+)+(J'_\lambda(u_k),u_k^-) =0.$$
A direct calculation yields that
$$
\begin{aligned}
\lim _{k \rightarrow\infty} J_{\lambda}\left(u_{k}\right) & =\lim _{k \rightarrow\infty}\left[J_{\lambda}\left(u_{k}\right)-\frac{1}{p} (J_{\lambda}^{\prime}\left(u_{k}\right), u_{k})\right] \\
& =\lim _{k \rightarrow\infty}\left[(\frac{1}{2}-\frac{1}{p})\|u_k\|_{\mathcal{H}_\lambda}^{2}+(\frac{1}{4}-\frac{1}{p})b\left\|\nabla u_k\right\|_{2}^{4}+\frac{2}{p^2}\int_V |u_k|^p\,d\mu\right] \\
& =m_{\lambda}.
\end{aligned}
$$
Then we have
$$(\frac{1}{2}-\frac{1}{p})\|u_k\|_{\mathcal{H}_\lambda}^{2}\leq (\frac{1}{2}-\frac{1}{p})\|u_k\|_{\mathcal{H}_\lambda}^{2}+(\frac{1}{4}-\frac{1}{p})b\left\|\nabla u_k\right\|_{2}^{4}+\frac{2}{p^2}\int_V |u_k|^p\,d\mu=m_\lambda+o(1).$$
Hence $\left\{u_{k}\right\}$ is bounded in $\mathcal{H}_{\lambda}$. Then by Lemma \ref{l2}, there exists $\lambda_{0}>0$ such that, for any $\lambda \geq \lambda_{0}$, 
$$
\begin{cases}u_{k} \rightharpoonup u_{\lambda}, & \text { weakly in } \mathcal{H}_{\lambda}, \\ u_{k} \rightarrow u_{\lambda}, & \text { pointwise in } V, \\ u_{k} \rightarrow u_{\lambda}, & \text { strongly in } \ell^{q}(V), ~q \in[2,\infty],\end{cases}
$$
where $u_{\lambda} \in \mathcal{H}_{\lambda}$.
By (\ref{1.2}), for any $\varepsilon>0$ and $t\neq 0,$ we have
\begin{equation}\label{3.9}
|t|^p|\log t^2|\leq (\varepsilon |t|^{2}+C_\varepsilon |t|^{q}),\quad q>p.
\end{equation}
Moreover, note that
\begin{equation}\label{3.8}
(J'_\lambda(u_k),u_k^{\pm})=\|u_k^{\pm}\|_{\mathcal{H}_\lambda}^{2}+b\left\|\nabla u_k\right\|_{2}^{2}\left(\|\nabla u_k^{\pm}\|_{2}^{2}-\frac{1}{2}K_V(u_k)\right)-\frac{a}{2}K_{V}(u_k)-\int_{V}\left|u_k^{\pm}\right|^{p} \log \left(u_k^{\pm}\right)^{2}\,d\mu=0.
\end{equation}
Then we get that
$$
\begin{aligned}
\|u_k^{\pm}\|_{\mathcal{H}_\lambda}^{2}< &\|u_k^{\pm}\|_{\mathcal{H}_\lambda}^{2}+b\left\|\nabla u_k\right\|_{2}^{2}\left(\|\nabla u_k^{\pm}\|_{2}^{2}-\frac{1}{2}K_V(u_k)\right)-\frac{a}{2}K_{V}(u_k)\\=&\int_{V}\left|u_k^{\pm}\right|^{p} \log \left(u_k^{\pm}\right)^{2}\,d\mu\\\leq &\int_V (\varepsilon|u_k^{\pm}|^{2}+C_\varepsilon|u_k^{\pm}|^{q})\,d\mu\\\leq &\varepsilon\|u^{\pm}\|^2_{\mathcal{H}_\lambda}+C_\varepsilon\|u^{\pm}\|^q_{\mathcal{H}_\lambda},
\end{aligned}
$$
which implies that
$\|u_k^{\pm}\|_{\mathcal{H}_\lambda}\geq C>0.$ Then we get that $u_\lambda^{\pm}\neq 0$.
Therefore, for $p>6$, we have
$$
\begin{aligned}
m_{\lambda}=&\lim _{k \rightarrow\infty}\left[(\frac{1}{2}-\frac{1}{p})\|u_k\|_{\mathcal{H}_\lambda}^{2}+(\frac{1}{4}-\frac{1}{p})b\left\|\nabla u_k\right\|_{2}^{4}+\frac{2}{p^2}\int_V |u_k|^p\,d\mu\right]\\ \geq&
\lim _{k \rightarrow\infty}\frac{2}{p^2}\int_V |u_k|^p\,d\mu\\=&
\frac{2}{p^2}\int_V |u_\lambda|^p\,d\mu\\>& 0.
\end{aligned}
$$
By (\ref{3.8}), weak lower semicontinuity of norm, Fatou's lemma and Lebesgue dominated theorem, we get that
$$
\begin{aligned}
&\|u_\lambda^{\pm}\|_{\mathcal{H}_\lambda}^{2}+b\left\|\nabla u_\lambda\right\|_{2}^{2}\left(\|\nabla u_\lambda^{\pm}\|_{2}^{2}-\frac{1}{2}K_V(u_\lambda)\right)-\frac{a}{2}K_{V}(u_\lambda)-\int_{V}\left(|u_\lambda^{\pm}|^{p} \log \left(u_\lambda^{\pm}\right)^{2}\right)^-\,d\mu \\
\leq & \liminf _{k \rightarrow\infty}\left[\|u_k^{\pm}\|_{\mathcal{H}_\lambda}^{2}+b\left\|\nabla u_k\right\|_{2}^{2}\left(\|\nabla u_k^{\pm}\|_{2}^{2}-\frac{1}{2}K_V(u_k)\right)-\frac{a}{2}K_{V}(u_k)-\int_{V}\left(|u_k^{\pm}|^{p} \log \left(u_k^{\pm}\right)^{2}\right)^-\,d\mu\right] \\
= & \liminf _{k \rightarrow\infty} \int_{V}\left(|u_k^{\pm}|^{p} \log \left(u_k^{\pm}\right)^{2}\right)^+\,d\mu \\
= & \int_{V}\left(|u_\lambda^{\pm}|^{p} \log \left(u_\lambda^{\pm}\right)^{2}\right)^+\,d\mu,
\end{aligned}
$$
which means that
\begin{equation*}
(J_{\lambda}^{\prime}\left(u_{\lambda}\right), u_{\lambda}^{\pm})=\|u_\lambda^{\pm}\|_{\mathcal{H}_\lambda}^{2}+b\left\|\nabla u_\lambda\right\|_{2}^{2}\left(\|\nabla u_\lambda^{\pm}\|_{2}^{2}-\frac{1}{2}K_V(u_\lambda)\right)-\frac{a}{2}K_{V}(u_\lambda)-\int_{V}|u_\lambda^{\pm}|^{p} \log \left(u_\lambda^{\pm}\right)^{2}\,d\mu\leq0.
\end{equation*}
By Lemma \ref{l5}, there exist two constants $s, t \in(0,1]$ such that $\tilde{u}=s u_{\lambda}^{+}+t u_{\lambda}^{-} \in \mathcal{M}_{\lambda}$. Then
$$
\begin{aligned}
m_{\lambda} \leq &J_{\lambda}(\widetilde{u})=J_{\lambda}\left(\widetilde{u}\right)-\frac{1}{p} (J_{\lambda}^{\prime}\left(\widetilde{u}\right), \widetilde{u})\\=&(\frac{1}{2}-\frac{1}{p})\|\widetilde{u}\|_{\mathcal{H}_\lambda}^{2}+b(\frac{1}{4}-\frac{1}{p})\left\|\nabla \widetilde{u}\right\|_{2}^{4}+\frac{2}{p^2}\int_V |\widetilde{u}|^p\,d\mu\\=& (\frac{1}{2}-\frac{1}{p})\|s u_{\lambda}^{+}+t u_{\lambda}^{-}\|_{\mathcal{H}_\lambda}^{2}+b(\frac{1}{4}-\frac{1}{p})\left\|\nabla s u_{\lambda}^{+}+t u_{\lambda}^{-}\right\|_{2}^{4}+\frac{2}{p^2}\int_V |s u_{\lambda}^{+}+t u_{\lambda}^{-}|^p\,d\mu\\ \leq&(\frac{1}{2}-\frac{1}{p})\|u_{\lambda}\|_{\mathcal{H}_\lambda}^{2}+b(\frac{1}{4}-\frac{1}{p})\left\|\nabla u_{\lambda}\right\|_{2}^{4}+\frac{2}{p^2}\int_V |u_{\lambda}|^p\,d\mu\\ \leq &
\liminf_{k \rightarrow\infty}\left[(\frac{1}{2}-\frac{1}{p})\|u_k\|_{\mathcal{H}_\lambda}^{2}+(\frac{1}{4}-\frac{1}{p})b\left\|\nabla u_k\right\|_{2}^{4}+\frac{2}{p^2}\int_V |u_k|^p\,d\mu\right]\\ =&
\liminf_{k \rightarrow\infty}\left[J_{\lambda}\left(u_{k}\right)-\frac{1}{p} (J_{\lambda}^{\prime}\left(u_{k}\right), u_{k})\right] \\=& m_\lambda.
\end{aligned}
$$
This implies that $s=t=1$. Hence $u_{\lambda} \in \mathcal{M}_{\lambda}$ and  $J_{\lambda}\left(u_{\lambda}\right)=m_{\lambda}>0.$

\end{proof}

The following lemma completes the proof of Theorem \ref{t1}.

\begin{lm}\label{l8}
If $u \in \mathcal{M}_{\lambda}$ with $J_{\lambda}(u)=m_{\lambda}$, then $u$ is a sign-changing solution of~the equation (\ref{0.2}). Moreover, $m_{\lambda}>2 c_{\lambda}$.
\end{lm}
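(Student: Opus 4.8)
Since the hypothesis supplies $u \in \mathcal{M}_\lambda$ with $u^{\pm}\neq 0$ and $J_\lambda(u)=m_\lambda$, the statement really has two parts: that this nodal minimizer is a \emph{free} critical point of $J_\lambda$, and the strict gap $m_\lambda > 2c_\lambda$. I would treat them separately, the first being the substantial one.

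For the first part the plan is a deformation-plus-degree argument, and it remains only to show $J'_\lambda(u)=0$. I would argue by contradiction: assuming $J'_\lambda(u)\neq 0$, continuity of $J'_\lambda$ gives $\mu,\delta>0$ with $\|J'_\lambda(v)\|\ge\mu$ whenever $\|v-u\|_{\mathcal{H}_\lambda}\le 3\delta$. Working on the square $D=(r,R)^2$ from Lemma \ref{4} and writing $g(s,t)=su^{+}+tu^{-}$, I would first record that the field $\Phi_0(s,t)=(g_1(s,t),g_2(s,t))$ vanishes in $\bar D$ only at $(1,1)$ and that its boundary signs (Lemma \ref{4}) are exactly the sign conditions of Miranda's theorem; a straight-line homotopy to $\Lambda(s,t)=(\tfrac{r+R}{2}-s,\ \tfrac{r+R}{2}-t)$ then stays nonzero on $\partial D$, so $\deg(\Phi_0,D,0)=\deg(\Lambda,D,0)=1$.

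Next I would invoke a standard quantitative deformation lemma at level $m_\lambda$ with $S=B(u,\delta)$, picking $\epsilon>0$ below $\mu\delta/8$, below $\tfrac12\bigl(m_\lambda-\max_{\partial D}J_\lambda(g)\bigr)$, and small enough that $\{(s,t)\in\bar D:J_\lambda(g(s,t))\ge m_\lambda-2\epsilon\}$ lies inside $\{g(s,t)\in B(u,\delta)\}$; all three are possible because Lemma \ref{l3} makes $J_\lambda(g)$ peak only at $(1,1)$. Setting $\gamma(s,t)=\eta(1,g(s,t))$, the flow fixes $\partial D$ (where the energy is already below $m_\lambda-2\epsilon$), so $\gamma=g$ on $\partial D$, while $J_\lambda(\gamma)\le m_\lambda-\epsilon<m_\lambda$ holds on all of $\bar D$ (by the energy drop on the superlevel set, and by monotonicity of $J_\lambda$ along $\eta$ elsewhere). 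Since $\gamma=g$ on $\partial D$, the field $\tilde\Phi(s,t)=\bigl((J'_\lambda(\gamma),\gamma^{+}),(J'_\lambda(\gamma),\gamma^{-})\bigr)$ agrees with $\Phi_0$ there, so $\deg(\tilde\Phi,D,0)=1$ and $\tilde\Phi$ vanishes at some $(s_0,t_0)\in D$. The hard part will be to guarantee $\gamma(s_0,t_0)^{\pm}\neq 0$, so that this zero genuinely lies in $\mathcal{M}_\lambda$; I would secure this through the embedding $\mathcal{H}_\lambda\hookrightarrow\ell^\infty(V)$ of Lemma \ref{l2}: on $\bar D$ one has $\|g(s,t)^{\pm}\|_\infty\ge r\|u^{\pm}\|_\infty>0$ while $\|\gamma-g\|_\infty\le C\|\gamma-g\|_{\mathcal{H}_\lambda}\le C\delta$, so after shrinking $\delta$ (the gradient bound survives) both nodal parts of $\gamma$ stay nonzero throughout $\bar D$. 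Then $\gamma(s_0,t_0)\in\mathcal{M}_\lambda$ with $J_\lambda(\gamma(s_0,t_0))<m_\lambda$, contradicting $m_\lambda=\inf_{\mathcal{M}_\lambda}J_\lambda$ and forcing $J'_\lambda(u)=0$.

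For the gap $m_\lambda>2c_\lambda$ the plan is purely algebraic. The one-dimensional Nehari analysis (the map $s\mapsto(J'_\lambda(su^{+}),su^{+})$ is positive for small $s$ and negative for large $s$ since $p>6$) produces $s^{*},t^{*}>0$ with $s^{*}u^{+},\,t^{*}u^{-}\in\mathcal{N}_\lambda$, whence $2c_\lambda\le J_\lambda(s^{*}u^{+})+J_\lambda(t^{*}u^{-})$. Feeding $s^{*}u^{+}+t^{*}u^{-}$ into the splitting identity (\ref{7.1}) and using $K_V(u)\le 0$, every correction term is nonnegative and the term $\tfrac{b}{2}\|\nabla(s^{*}u^{+})\|_2^2\|\nabla(t^{*}u^{-})\|_2^2$ is strictly positive (as $u^{\pm}$ are nonconstant, $\|\nabla u^{\pm}\|_2>0$), so $J_\lambda(s^{*}u^{+})+J_\lambda(t^{*}u^{-})<J_\lambda(s^{*}u^{+}+t^{*}u^{-})$. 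Finally Lemma \ref{l3}, applied to $u\in\mathcal{M}_\lambda$, bounds $J_\lambda(s^{*}u^{+}+t^{*}u^{-})\le J_\lambda(u)=m_\lambda$. Chaining the three inequalities yields $2c_\lambda<m_\lambda$. The only genuine difficulty lies in the first part, namely the bookkeeping that simultaneously fixes $\partial D$, lowers the energy below $m_\lambda$, and preserves the sign-changing character of the deformed map.
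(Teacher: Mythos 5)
Your proposal is correct, but for the main assertion it takes a genuinely different technical route from the paper. The paper never invokes the quantitative deformation lemma or Brouwer degree: assuming $u$ is not a critical point, it picks a single direction $\phi\in C_c(V)$ with $(J'_\lambda(u),\phi)\le -1$, perturbs by hand to $v=su^{+}+tu^{-}+\varepsilon\eta(s,t)\phi$ with a cutoff $\eta$ supported near $(1,1)$, shows $J_\lambda(v)<m_\lambda$ on the whole square $[\delta,2-\delta]^2$ (using Lemma \ref{l3} away from $(1,1)$ and the derivative estimate near it), and then applies Miranda's theorem directly to $F_1(s,t)=(J'_\lambda(v),v^{+})$ and $F_2(s,t)=(J'_\lambda(v),v^{-})$; since $\eta$ vanishes on the boundary of the square, the boundary sign computations reduce to the explicit unperturbed expressions from Lemma \ref{4} --- playing exactly the role of your observation that $\gamma=g$ on $\partial D$. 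Your version replaces this hand-made perturbation with Willem's deformation lemma plus a homotopy-invariance computation of the degree: this buys modularity (no explicit estimates on the perturbed field beyond agreement on $\partial D$) at the cost of heavier machinery, and your explicit check via the $\ell^\infty$-embedding of Lemma \ref{l2} that the deformed function stays sign-changing fills a step the paper passes over silently (it never verifies $\widetilde{u}^{\pm}\neq 0$ for its Miranda zero, though the same small-$\varepsilon$ argument would work). For the gap $m_\lambda>2c_\lambda$ the two arguments essentially coincide --- both chain $2c_\lambda\le J_\lambda(s^{*}u^{+})+J_\lambda(t^{*}u^{-})<J_\lambda(s^{*}u^{+}+t^{*}u^{-})\le J_\lambda(u)=m_\lambda$ via the splitting identity (\ref{7.1}) and Lemma \ref{l3} --- except that you derive strictness from the cross term $\frac{b}{2}\|\nabla(s^{*}u^{+})\|_2^2\,\|\nabla(t^{*}u^{-})\|_2^2>0$, whereas the paper leans on $K_V(u)<0$; your choice is in fact slightly more robust, since $\|\nabla u^{\pm}\|_2>0$ is automatic for nonzero $\ell^2$ functions on a connected graph, while strict negativity of $K_V(u)$ would fail if the positive and negative parts of $u$ were separated by a zero set.
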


\begin{proof}
We assume by contradiction that $u \in \mathcal{M}_{\lambda}$ with $J_{\lambda}(u)=m_{\lambda}$, but $u$ is not a solution of~the equation (\ref{0.2}). Then we can find a function $\phi \in C_{c}(V)$ such that

$$
\int_{V}(a \nabla u \nabla \phi+h_\lambda(x) u \phi)\,d \mu+b \int_{V}|\nabla u|^{2}\,d \mu \int_{V} \nabla u \nabla \phi\,d \mu-\int_{V}|u|^{p-2}u\phi\log u^2\,d\mu\leq -1.
$$
which implies that, for some $\varepsilon>0$ small enough,
$$
\left(J_{\lambda}^{\prime}\left(s u^{+}+t u^{-}+\sigma \phi\right), \phi\right)\leq-\frac{1}{2},\quad |s-1|+|t-1|+|\sigma| \leq \varepsilon.
$$

We consider $J_{\lambda}\left(s u^{+}+t u^{-}+\varepsilon \eta(s, t) \phi\right)$, where $\eta$ is a cutoff function such that
$$
\eta(s, t)= 
\begin{cases}1, & \text { if }|s-1| \leq \frac{1}{2} \varepsilon \text { and }|t-1| \leq \frac{1}{2} \varepsilon, \\ 0, & \text { if }|s-1| \geq \varepsilon \text { or }|t-1| \geq \varepsilon.
\end{cases}
$$
If $|s-1| \leq \varepsilon$ and $|t-1| \leq \varepsilon$, we have
$$
\begin{aligned}
J_{\lambda}\left(s u^{+}+t u^{-}+\varepsilon \eta(s, t) \phi\right) &  =J_{\lambda}\left(s u^{+}+t u^{-}\right)+\int_{0}^{1} \left(J_{\lambda}^{\prime}\left(s u^{+}+t u^{-}+\sigma \varepsilon \eta(s, t) \phi\right),\varepsilon \eta(s, t) \phi\right) d \sigma \\
& \leq J_{\lambda}\left(s u^{+}+t u^{-}\right)-\frac{1}{2} \varepsilon \eta(s, t).
\end{aligned}
$$
If $|s-1| \geq \varepsilon$ or $|t-1| \geq \varepsilon,\,  \eta(s, t)=0$, the above estimate is obvious. Since $u \in \mathcal{M}_{\lambda}$, for $(s, t) \neq(1,1)$, by Lemma \ref{l3}, we have 
$$
J_{\lambda}\left(s u^{+}+t u^{-}+\varepsilon \eta(s, t) \phi\right) \leq J_{\lambda}\left(s u^{+}+t u^{-}\right)<J_{\lambda}(u), \quad (s, t) \neq(1,1).
$$
For $(s, t)=(1,1)$,
$$
J_{\lambda}\left(s u^{+}+t u^{-}+\varepsilon \eta(s, t) \phi\right) \leq J_{\lambda}\left(s u^{+}+t u^{-}\right)-\frac{1}{2} \varepsilon \eta(1,1)=J_{\lambda}(u)-\frac{1}{2} \varepsilon.
$$
In any case, we have $J_{\lambda}\left(s u^{+}+t u^{-}+\varepsilon \eta(s, t) \phi\right)<J_{\lambda}(u)=m_{\lambda}$. In particular, for $0<\delta<1-\varepsilon$,
$$
\sup _{\delta \leq s, t \leq 2-\delta} J_{\lambda}\left(s u^{+}+t u^{-}+\varepsilon \eta(s, t) \phi\right)=\tilde{m}_{\lambda}<m_{\lambda}.
$$
Let $v=s u^{+}+t u^{-}+\varepsilon \eta(s, t) \phi$. Define
$$
F_{1}(s, t)=\left(J_{\lambda}^{\prime}(v), v^{+}\right),\qquad F_{2}(s, t)= \left(J_{\lambda}^{\prime}(v), v^{-}\right).
$$
Since $u\in\mathcal{M}_\lambda$, we have
\begin{equation}\label{3.7}
\begin{aligned}
\int_{V}\left|u^{+}\right|^{p} \log \left(u^{+}\right)^{2}\,d\mu= &\|u^{+}\|_{\mathcal{H}_\lambda}^{2}+b\left\|\nabla u^{+}\right\|_{2}^{4}-\frac{a}{2}K_{V}(u)+\frac{b}{2}K^2_V(u) 
+b\left\|\nabla u^{+}\right\|_{2}^{2}\left\|\nabla u^{-}\right\|_{2}^{2}\\&-\frac{b}{2}K_V(u)\left(\left\|\nabla u^{+}\right\|_{2}^{2}+\left\|\nabla u^{-}\right\|_{2}^{2}\right)
-bK_V(u)\|\nabla u^{+}\|^2_2,
\end{aligned}
\end{equation}
and 
\begin{equation}\label{4.7}
\begin{aligned}
\int_{V}\left|u^{-}\right|^{p} \log \left(u^{-}\right)^{2}\,d\mu= &\|u^{-}\|_{\mathcal{H}_\lambda}^{2}+b\left\|\nabla u^{-}\right\|_{2}^{4}-\frac{a}{2}K_{V}(u)+\frac{b}{2}K^2_V(u) 
+b\left\|\nabla u^{+}\right\|_{2}^{2}\left\|\nabla u^{-}\right\|_{2}^{2}\\&-\frac{b}{2}K_V(u)\left(\left\|\nabla u^{+}\right\|_{2}^{2}+\left\|\nabla u^{-}\right\|_{2}^{2}\right)
-bK_V(u)\|\nabla u^{-}\|^2_2.
\end{aligned}
\end{equation}
By the definition of $\eta$, for $s=\delta<1-\varepsilon$ and $t \in(\delta, 2-\delta)$, we have $\eta(s, t)=0$ and $s<t$. Hence by (\ref{3.7}) and $p>6$, we get that
$$
\begin{aligned}
&F_{1}(\delta, t)=(J_{\lambda}^{\prime}(\delta u^++tu^-),\delta u^{+})\\=&\delta^{2}\|u^{+}\|_{\mathcal{H}_\lambda}^{2}+b \delta^{4}\left\|\nabla u^{+}\right\|_{2}^{4}-\delta^p\int_{V}\left|u^{+}\right|^{p} \log \left(u^{+}\right)^{2}\,d\mu-\delta^p\log \delta^2\int_{V}\left|u^{+}\right|^{p}\,d\mu-\frac{a}{2}\delta tK_{V}(u)\\&+\frac{b}{2}\delta^2t^2K^2_V(u) 
+b \delta^{2} t^{2}\left\|\nabla u^{+}\right\|_{2}^{2}\left\|\nabla u^{-}\right\|_{2}^{2}-\frac{b}{2}\delta tK_V(u)\left(\delta^2\left\|\nabla u^{+}\right\|_{2}^{2}+t^2\left\|\nabla u^{-}\right\|_{2}^{2}\right)
\\&-b\delta^3tK_V(u)\|\nabla u^{+}\|^2_2\\ \geq &\delta^{2}\|u^{+}\|_{\mathcal{H}_\lambda}^{2}+b \delta^{4}\left\|\nabla u^{+}\right\|_{2}^{4}-\delta^p\int_{V}\left|u^{+}\right|^{p} \log \left(u^{+}\right)^{2}\,d\mu-\delta^p\log \delta^2\int_{V}\left|u^{+}\right|^{p}\,d\mu-\frac{a}{2}\delta^2K_{V}(u)\\&+\frac{b}{2}\delta^4K^2_V(u) 
+b \delta^{4}\left\|\nabla u^{+}\right\|_{2}^{2}\left\|\nabla u^{-}\right\|_{2}^{2}-\frac{b}{2}\delta^4K_V(u)\left(\left\|\nabla u^{+}\right\|_{2}^{2}+\left\|\nabla u^{-}\right\|_{2}^{2}\right)
-b\delta^4K_V(u)\|\nabla u^{+}\|^2_2\\=&(\delta^2-\delta^p)\|u^{+}\|_{\mathcal{H}_\lambda}^{2}+b(\delta^{4}-\delta^p)\left\|\nabla u^{+}\right\|_{2}^{4}-\delta^p\log \delta^2\int_{V}\left|u^{+}\right|^{p}\,d\mu-\frac{a}{2}(\delta^2-\delta^p)K_{V}(u)\\&+\frac{b}{2}(\delta^4-\delta^p)K^2_V(u) 
+b (\delta^{4}-\delta^p)\left\|\nabla u^{+}\right\|_{2}^{2}\left\|\nabla u^{-}\right\|_{2}^{2}-\frac{b}{2}(\delta^4-\delta^p)K_V(u)\left(\left\|\nabla u^{+}\right\|_{2}^{2}+\left\|\nabla u^{-}\right\|_{2}^{2}\right)
\\&-b(\delta^4-\delta^p)K_V(u)\|\nabla u^{+}\|^2_2\\>&-\delta^p\log \delta^2\int_{V}\left|u^{+}\right|^{p}\,d\mu\\>&0.
\end{aligned}
$$
For $s=2-\delta>1+\varepsilon$ and $t \in(\delta, 2-\delta)$, we have $\eta(s, t)=0$ and $s>t$. Similarly,
$$
\begin{aligned}
&F_{1}(2-\delta, t)=(J_{\lambda}^{\prime}((2-\delta)u^++tu^-),(2-\delta)u^{+})\\=&(2-\delta)^{2}\|u^{+}\|_{\mathcal{H}_\lambda}^{2}+b (2-\delta)^{4}\left\|\nabla u^{+}\right\|_{2}^{4}-(2-\delta)^p\int_{V}\left|u^{+}\right|^{p} \log \left(u^{+}\right)^{2}\,d\mu\\&-(2-\delta)^p\log (2-\delta)^2\int_{V}\left|u^{+}\right|^{p}\,d\mu-\frac{a}{2}(2-\delta)tK_{V}(u)+\frac{b}{2}(2-\delta)^2t^2K^2_V(u) 
\\&+b (2-\delta)^{2} t^{2}\left\|\nabla u^{+}\right\|_{2}^{2}\left\|\nabla u^{-}\right\|_{2}^{2}-\frac{b}{2}(2-\delta)tK_V(u)\left((2-\delta)^2\left\|\nabla u^{+}\right\|_{2}^{2}+t^2\left\|\nabla u^{-}\right\|_{2}^{2}\right)
\\&-b(2-\delta)^3tK_V(u)\|\nabla u^{+}\|^2_2\\ \leq &(2-\delta)^{2}\|u^{+}\|_{\mathcal{H}_\lambda}^{2}+b (2-\delta)^{4}\left\|\nabla u^{+}\right\|_{2}^{4}-(2-\delta)^p\int_{V}\left|u^{+}\right|^{p} \log \left(u^{+}\right)^{2}\,d\mu\\&-(2-\delta)^p\log (2-\delta)^2\int_{V}\left|u^{+}\right|^{p}\,d\mu-\frac{a}{2}(2-\delta)^2K_{V}(u)+\frac{b}{2}(2-\delta)^4K^2_V(u) 
\\&+b (2-\delta)^{4}\left\|\nabla u^{+}\right\|_{2}^{2}\left\|\nabla u^{-}\right\|_{2}^{2}-\frac{b}{2}(2-\delta)^4K_V(u)\left(\left\|\nabla u^{+}\right\|_{2}^{2}+\left\|\nabla u^{-}\right\|_{2}^{2}\right)
-b(2-\delta)^4K_V(u)\|\nabla u^{+}\|^2_2\\=&\left((2-\delta)^2-(2-\delta)^p\right)\|u^{+}\|_{\mathcal{H}_\lambda}^{2}+b\left((2-\delta)^{4}-(2-\delta)^p\right)\left\|\nabla u^{+}\right\|_{2}^{4}-(2-\delta)^p\log (2-\delta)^2\int_{V}\left|u^{+}\right|^{p}\,d\mu\\&-\frac{a}{2}\left((2-\delta)^2-(2-\delta)^p\right)K_{V}(u)+\frac{b}{2}\left((2-\delta)^2-(2-\delta)^p\right)K^2_V(u) 
\\&+b\left((2-\delta)^{4}-(2-\delta)^p\right)\left\|\nabla u^{+}\right\|_{2}^{2}\left\|\nabla u^{-}\right\|_{2}^{2}-\frac{b}{2}\left((2-\delta)^4-(2-\delta)^p\right)K_V(u)\left(\left\|\nabla u^{+}\right\|_{2}^{2}+\left\|\nabla u^{-}\right\|_{2}^{2}\right)
\\&-b\left((2-\delta)^4-(2-\delta)^p\right)K_V(u)\|\nabla u^{+}\|^2_2\\<&-(2-\delta)^p\log (2-\delta)^2\int_{V}\left|u^{+}\right|^{p}\,d\mu\\<& 0.
\end{aligned}
$$
Hence, we have
$$
F_{1}(\delta, t)>0,\qquad F_{1}(2-\delta, t)<0,\quad t \in(\delta, 2-\delta).
$$
By (\ref{4.7}) and similar arguments as above, we have
$$
F_{2}(s, \delta)>0,\qquad F_{2}(s, 2-\delta)<0,\quad s \in(\delta, 2-\delta).
$$
By the Miranda's theorem \cite{K}, there exists $\left(s_{0}, t_{0}\right) \in(\delta, 2-\delta) \times(\delta, 2-\delta)$ such that $$\widetilde{u}=s_{0} u^{+}+t_{0} u^{-}+\varepsilon \eta\left(s_{0}, t_{0}\right) \phi \in\mathcal{M}_{\lambda},$$ and $J_{\lambda}(\widetilde{u})<m_{\lambda},$ which contradicts the definition of $m_{\lambda}$.

Finally, we claim that $m_{\lambda}>2 c_{\lambda}$. In fact, let $u \in \mathcal{M}_{\lambda}$ such that $J_{\lambda}(u)=m_{\lambda}$, then $u^{ \pm} \neq 0$. By similar arguments to the proof of Lemma \ref{4} and Lemma \ref{l5}, one can get that there exists a unique $s_{u^{+}} \in(0,1]$ such that $s_{u^{+}} u^{+} \in \mathcal{N}_{\lambda}$, and a unique $t_{u^{-}} \in(0,1]$ such that $t_{u^{-}} u^{-} \in \mathcal{N}_{\lambda}$. Moreover, we can obtain that $c_{\lambda}>0$ is achieved and that if $u \in \mathcal{N}_\lambda$ with $J_{\lambda}(u)=c_{\lambda}$, then $u$ is a least energy solution.

By the definition of $J_{\lambda}$ and $K_{V}(u)<0$, we have
$$
\begin{aligned}
&J_{\lambda}(s_{u^{+}}u^++t_{u^{-}}u^-)\\=&J_{\lambda}\left(s_{u^{+}}u^{+}\right)+J_{\lambda}\left(t_{u^{-}}u^{-}\right)-\frac{a}{2}s_{u^{+}}t_{u^{-}}K_{V}(u)+\frac{b}{4}s_{u^{+}}^2t_{u^{-}}^2K^2_V(u)+\frac{b}{2}\left\|\nabla (s_{u^{+}}u^{+})\right\|_{2}^{2}\left\|\nabla (t_{u^{-}}u^{-})\right\|_{2}^{2}\\ &-\frac{b}{2}s_{u^{+}}t_{u^{-}}K_V(u)\left(\left\|\nabla (s_{u^{+}}u^{+})\right\|_{2}^{2}+\left\|\nabla (t_{u^{-}}u^{-})\right\|_{2}^{2}\right)\\>&J_{\lambda}\left(s_{u^{+}}u^{+}\right)+J_{\lambda}\left(t_{u^{-}}u^{-}\right).
\end{aligned}
$$
By Lemma \ref{l3}, we get that
$$
m_{\lambda}=J_{\lambda}\left(u\right) \geq J_{\lambda}\left(s_{u^{+}} u^{+}+t_{u^{-}} u^{-}\right)>J_{\lambda}\left(s_{u^{+}} u^{+}\right)+J_{\lambda}\left(t_{u^{-}} u^{-}\right) \geq 2 c_{\lambda}.
$$
We complete the proof.

\end{proof}

{\bf Proof of Theorem \ref{t2}} Similar to the proof of Theorem \ref{t1}, we can also obtain the existence of a least energy sign-changing solution $u_{0}$ of the equation (\ref{1.8}), which achieves the minimum $m_{\Omega}$ of the functional $J_{\Omega}$ in $\mathcal{M}_{\Omega}$ and the least energy solution of the equation (\ref{1.8}), which achieves the minimum $c_{\Omega}$ of the functional $J_{\Omega}$ in $\mathcal{N}_{\Omega}$. Moreover, we can also get that $m_{\Omega}>2 c_{\Omega}$.\qed

\section{Convergence of least energy sign-changing solutions}
In this section, we prove Theorem \ref{t3}. In order to prove the asymptotic behavior of the least energy sign-changing $u_{\lambda} \in \mathcal{H}_{\lambda}$ of the equation (\ref{0.2}) as $\lambda \rightarrow\infty$, we first show some important lemmas.

\begin{lm}\label{s1}
For any $u \in \mathcal{M}_{\lambda}$, there exists $\sigma>0$ such that $\|u\|_{\mathcal{H}_{\lambda}} \geq\|u\|_{H^1} \geq \sigma$, where $\sigma$ does not depend on $\lambda.$
\end{lm}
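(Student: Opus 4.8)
The plan is to read off everything from the sign-changing Nehari constraints $(J_\lambda'(u),u^{\pm})=0$ and to control the logarithmic term by the $\lambda$-independent inequality (\ref{3.9}) combined with the elementary embeddings $\|\cdot\|_q\le\|\cdot\|_2\le\|\cdot\|_{H^1}$ for $q\ge 2$; the whole point is that every constant entering the final estimate must be traceable to objects that do not see $\lambda$. The left inequality is immediate: since $a\ge 1$ and $h_\lambda(x)=\lambda h(x)+1\ge 1$, the integrand $a|\nabla u|^2+h_\lambda(x)u^2$ dominates $|\nabla u|^2+u^2$ pointwise, and integrating over $V$ gives $\|u\|_{\mathcal{H}_\lambda}\ge\|u\|_{H^1}$.

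For the uniform lower bound, fix $u\in\mathcal{M}_\lambda$ and use the identity (\ref{3.8}) for $u^+$. Because $K_V(u)\le 0$, each summand on its left-hand side other than $\|u^+\|_{\mathcal{H}_\lambda}^2$ is nonnegative, so
$$\|u^+\|_{\mathcal{H}_\lambda}^2\le\int_V |u^+|^p\log(u^+)^2\,d\mu\le\int_V |u^+|^p\big|\log (u^+)^2\big|\,d\mu.$$
I would then apply (\ref{3.9}) with a fixed $q>p$ and $\varepsilon=\tfrac12$, followed by $\|u^+\|_q^q\le\|u^+\|_2^q$ (the inclusion $\ell^2\hookrightarrow\ell^q$ recorded in Lemma \ref{l2}) and $\|u^+\|_2\le\|u^+\|_{H^1}$, to bound the right-hand side by $\tfrac12\|u^+\|_{H^1}^2+C_{1/2}\|u^+\|_{H^1}^q$. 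Since the left-hand side satisfies $\|u^+\|_{\mathcal{H}_\lambda}^2\ge\|u^+\|_{H^1}^2$ (again by $a\ge 1$, $h_\lambda\ge 1$), absorbing the $\tfrac12\|u^+\|_{H^1}^2$ gives $\tfrac12\|u^+\|_{H^1}^2\le C_{1/2}\|u^+\|_{H^1}^q$. As $u^+\not\equiv 0$, this yields $\|u^+\|_{H^1}\ge (2C_{1/2})^{-1/(q-2)}=:\sigma>0$.

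Finally, $\int_V u^2=\int_V (u^+)^2+\int_V(u^-)^2\ge\int_V(u^+)^2$, while Proposition \ref{o}(i) with $s=t=1$ and $-K_V(u)\ge 0$ gives $\int_V|\nabla u|^2\ge\int_V|\nabla u^+|^2$, whence $\|u\|_{H^1}\ge\|u^+\|_{H^1}\ge\sigma$, closing the chain. The only genuinely delicate point—and the main obstacle—is the $\lambda$-independence of $\sigma$: it is secured precisely because $C_\varepsilon$ in (\ref{3.9}) depends only on the nonlinearity, the $\ell^q$--$\ell^2$--$H^1$ embedding constants are universal, and the sole $\lambda$-dependent quantity $h_\lambda$ enters only through the harmless bound $h_\lambda\ge 1$; no compactness or concentration argument is required.
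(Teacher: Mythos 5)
Your proposal is correct and follows essentially the same route as the paper's proof: both drop the nonnegative Kirchhoff and $K_V$ terms in the Nehari constraint $(J_\lambda'(u),u^{\pm})=0$, bound the logarithmic integral via (\ref{3.9}) with $\varepsilon=\tfrac12$ and the embeddings $\|\cdot\|_q\le\|\cdot\|_2\le\|\cdot\|_{H^1}$, absorb the quadratic term, and pass from $u^{\pm}$ to $u$ using $-K_V(u)\ge 0$. The only caveat --- shared with the paper, whose own proof silently uses $\|u\|_{\mathcal{H}_\lambda}\ge\|u\|_{H^1}$ while assuming only $a>0$ --- is your explicit appeal to $a\ge 1$; for $0<a<1$ one would instead get $\|u\|_{\mathcal{H}_\lambda}^2\ge\min\{a,1\}\,\|u\|_{H^1}^2$, which still yields a $\lambda$-independent $\sigma$ after adjusting constants.
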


\begin{proof}
Let $\varepsilon=\frac{1}{2}$ in (\ref{3.9}), then for $t\neq 0$, we get that
\begin{equation}\label{4.1}
|t|^p|\log t^2|\leq (\frac{1}{2} |t|^{2}+C|t|^{q}),\quad q>p>6.
\end{equation}
Since $u \in \mathcal{M}_{\lambda}$ and $K_V(u)<0$, by Lemma \ref{l2} and (\ref{4.1}), we have that
$$
\begin{aligned}
0=&(J_{\lambda}^{\prime}(u),u^{\pm})\\=&\|u^{\pm}\|_{\mathcal{H}_\lambda}^{2}+b\left\|\nabla u^{\pm}\right\|_{2}^{4}-\int_{V}\left|u^{\pm}\right|^{p} \log \left(u^{\pm}\right)^{2}\,d\mu-\frac{a}{2}K_{V}(u)+\frac{b}{2}K^2_V(u) 
\\&+b\left\|\nabla u^{+}\right\|_{2}^{2}\left\|\nabla u^{-}\right\|_{2}^{2}-\frac{b}{2}K_V(u)\left(\left\|\nabla u^{+}\right\|_{2}^{2}+\left\|\nabla u^{-}\right\|_{2}^{2}\right)
-bK_V(u)\|\nabla u^{\pm}\|^2_2\\\geq &\|u^{\pm}\|_{\mathcal{H}_\lambda}^{2}+b\left\|\nabla u^{\pm}\right\|_{2}^{4}-\int_{V}\left|u^{\pm}\right|^{p} \log \left(u^{\pm}\right)^{2}\,d\mu\\ \geq & \|u^{\pm}\|_{H^1}^{2}-\frac{1}{2}\|u^{\pm}\|^2_2-C\|u^{\pm}\|^q_q\\ \geq &\|u^{\pm}\|^2_{H^1}-\frac{1}{2}\|u^{\pm}\|^2_2-C\|u^{\pm}\|^q_2\\ \geq &\|u^{\pm}\|^2_{H^1}-\frac{1}{2}\|u^{\pm}\|^2_{H^1}-C\|u^{\pm}\|^q_{H^1}\\ =& \frac{1}{2}\|u^{\pm}\|^2_{H^1}-C\|u^{\pm}\|^q_{H^1}.
\end{aligned}
$$
This implies that
$$\|u^{\pm}\|_{\mathcal{H}_\lambda}\geq \|u^{\pm}\|_{H^1}\geq \left(\frac{1}{C}\right)^{\frac{1}{q-2}}>0.$$

As a consequence, we get that
$$
\|u\|_{\mathcal{H}_{\lambda}}^{2} \geq\|u\|_{H^1}^{2}=\left\|u^{+}\right\|_{H^1}^{2}+\left\|u^{-}\right\|_{H^1}^{2}-K_{V}(u)>\left\|u^{+}\right\|_{H^1}^{2}+\left\|u^{-}\right\|_{H^1}^{2} \geq 2\left(\frac{1}{C}\right)^{\frac{1}{q-2}}>0.
$$
By choosing $\sigma=2\left(\frac{1}{C}\right)^{\frac{1}{q-2}}>0$, we get the desired result.

\end{proof}

\begin{lm}\label{s2}
Let $\left\{u_{k}\right\} \subset \mathcal{M}_{\lambda}$ satisfy $\lim \limits_{k \rightarrow \infty} J_{\lambda}\left(u_{k}\right)=m_{\lambda}$. Then there exists $c_{0}>0$ such that $\left\|u_{k}\right\|_{\mathcal{H}_{\lambda}} \leq c_{0}$, where $c_0$ does not depend on $\lambda.$
\end{lm}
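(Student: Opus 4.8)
The plan is to combine the coercivity of $J_\lambda$ on $\mathcal{M}_\lambda$ with a uniform-in-$\lambda$ upper bound for $m_\lambda$, the latter obtained by testing against a fixed function supported inside the potential well.

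First I would reproduce the energy identity already used in the proof of Lemma \ref{l9}. Since $u_k \in \mathcal{M}_\lambda$ we have $(J_\lambda'(u_k),u_k^+)=(J_\lambda'(u_k),u_k^-)=0$, hence $(J_\lambda'(u_k),u_k)=0$ by (\ref{0.3}). Subtracting $\frac1p(J_\lambda'(u_k),u_k)$ from $J_\lambda(u_k)$, the logarithmic terms cancel and one gets
$$J_\lambda(u_k)=\left(\frac12-\frac1p\right)\|u_k\|_{\mathcal{H}_\lambda}^2+b\left(\frac14-\frac1p\right)\|\nabla u_k\|_2^4+\frac{2}{p^2}\int_V|u_k|^p\,d\mu.$$
As $p>6$, all three coefficients are strictly positive, so discarding the last two nonnegative terms yields $\left(\frac12-\frac1p\right)\|u_k\|_{\mathcal{H}_\lambda}^2\le J_\lambda(u_k)$. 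Since $J_\lambda(u_k)\to m_\lambda$, after discarding finitely many terms we may assume $J_\lambda(u_k)\le m_\lambda+1$ for all $k$, so that $\|u_k\|_{\mathcal{H}_\lambda}^2\le (m_\lambda+1)/(\frac12-\frac1p)$. It therefore suffices to bound $m_\lambda$ from above by a constant independent of $\lambda$.

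This upper bound is the only delicate point, and it is where the uniformity in $\lambda$ is genuinely at stake. Here I would use that $h$ vanishes on $\Omega$. Fix once and for all a function $w\in C_c(\Omega)$ with $w^\pm\not\equiv 0$, which exists by $(h_1)$. Since $\text{supp}(w^\pm)\subset\Omega$ and $h_\lambda\equiv 1$ on $\Omega$, the weighted term $\int_V h_\lambda (w^\pm)^2\,d\mu=\int_\Omega (w^\pm)^2\,d\mu$ is independent of $\lambda$; the gradient terms, the quantity $K_V(w)$, and the integrals $\int_V|w^\pm|^p\,d\mu$ and $\int_V|w^\pm|^p\log(w^\pm)^2\,d\mu$ never involve $h_\lambda$ and are likewise $\lambda$-independent. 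Consequently the functions $g_1,g_2$ of Lemma \ref{4} associated with $w$ do not depend on $\lambda$, so the unique pair $(s_w,t_w)$ for which $s_w w^++t_w w^-\in\mathcal{M}_\lambda$, together with the value $J_\lambda(s_w w^++t_w w^-)$, are independent of $\lambda$ as well.

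Combining the two steps, set $C_1:=J_\lambda(s_w w^++t_w w^-)$, a constant depending only on $w$ (hence on $\Omega$) but not on $\lambda$. By definition of $m_\lambda$ we obtain $m_\lambda\le C_1$ for every admissible $\lambda$, and the coercivity estimate then gives $\|u_k\|_{\mathcal{H}_\lambda}^2\le (C_1+1)/(\frac12-\frac1p)$, so the claim holds with $c_0:=\big((C_1+1)/(\frac12-\frac1p)\big)^{1/2}$, independent of $\lambda$. The main obstacle is recognizing that coercivity alone is not enough — the naive bound is in terms of $m_\lambda$, which could a priori grow with $\lambda$ — and that the cure is to test against a function confined to the potential well, on which the weight $h_\lambda$ is frozen at the constant $1$ irrespective of $\lambda$.
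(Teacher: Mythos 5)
Your proof is correct and takes essentially the same approach as the paper: both use the coercivity identity $J_\lambda(u_k)-\frac{1}{p}(J_\lambda'(u_k),u_k)$ to control $\|u_k\|_{\mathcal{H}_\lambda}^2$ by $m_\lambda$, and then bound $m_\lambda$ uniformly in $\lambda$ by exploiting that $h_\lambda\equiv 1$ on the potential well. The paper compresses this last step into the inclusion $\mathcal{M}_\Omega\subset\mathcal{M}_\lambda$ (so $m_\lambda\le m_\Omega$), whereas you make the same mechanism explicit by projecting a fixed sign-changing $w\in C_c(\Omega)$ onto $\mathcal{M}_\lambda$ via Lemma \ref{4} and observing that the resulting energy is $\lambda$-independent --- a minor variant that merely spells out what the paper leaves implicit.
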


\begin{proof}
Note that $\mathcal{M}_{\Omega} \subset \mathcal{M}_{\lambda}$, then we have $0<m_{\lambda} \leq m_{\Omega}$. Since $\{u_k\}\subset\mathcal{M}_\lambda$, by (\ref{0.3}), we get that $$(J'_\lambda(u_k),u_k)=(J'_\lambda(u_k),u_k^+)+(J'_\lambda(u_k),u_k^-) =0.$$
A direct calculation yields that
$$
\begin{aligned}
\lim _{k \rightarrow\infty} J_{\lambda}\left(u_{k}\right) & =\lim _{k \rightarrow\infty}\left[J_{\lambda}\left(u_{k}\right)-\frac{1}{p} (J_{\lambda}^{\prime}\left(u_{k}\right), u_{k})\right] \\
& =\lim _{k \rightarrow\infty}\left[(\frac{1}{2}-\frac{1}{p})\|u_k\|_{\mathcal{H}_\lambda}^{2}+(\frac{1}{4}-\frac{1}{p})b\left\|\nabla u_k\right\|_{2}^{4}+\frac{2}{p^2}\int_V |u_k|^p\,d\mu\right] \\
& =m_{\lambda}.
\end{aligned}
$$
Then we have
$$\lim\limits_{k\rightarrow\infty}\|u_k\|_{\mathcal{H}_\lambda}^{2}\leq Cm_\lambda\leq Cm_\Omega.$$
This implies that there exists $c_0>0$, which is independent of $\lambda$, such that
$\|u_k\|_{\mathcal{H}_\lambda}\leq c_0.$

\end{proof}

Now we prove a crucial lemma for the relation
between $m_{\lambda}$ and $m_{\Omega}$ as $\lambda\rightarrow\infty.$

\begin{lm}\label{s3}
$m_{\lambda} \rightarrow m_{\Omega}$ as $\lambda \rightarrow\infty$.
\end{lm}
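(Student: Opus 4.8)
The plan is to derive $m_\lambda\to m_\Omega$ from the two one-sided bounds $\limsup_{\lambda\to\infty}m_\lambda\le m_\Omega$ and $\liminf_{\lambda\to\infty}m_\lambda\ge m_\Omega$. The upper bound is immediate: for $u\in\mathcal{M}_\Omega$, extending $u$ by zero to $\mathbb{Z}^3$ produces a function that vanishes on $\Omega^c\supseteq\{h>0\}$, so $\int_{\mathbb{Z}^3}\lambda h u^2\,d\mu=0$ and hence $J_\lambda(u)=J_\Omega(u)$ and $(J_\lambda'(u),u^\pm)=(J_\Omega'(u),u^\pm)$. Thus $\mathcal{M}_\Omega\subset\mathcal{M}_\lambda$ and $m_\lambda\le m_\Omega$ for every $\lambda>0$, as already used in Lemma \ref{s2}. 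It therefore suffices to fix an arbitrary sequence $\lambda_k\to\infty$ and show $\liminf_k m_{\lambda_k}\ge m_\Omega$; the arbitrariness of $\{\lambda_k\}$ then yields the claim.

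For each $k$, Lemma \ref{l9} furnishes $u_k\in\mathcal{M}_{\lambda_k}$ with $J_{\lambda_k}(u_k)=m_{\lambda_k}$, and Lemma \ref{s2} gives a bound $\|u_k\|_{\mathcal{H}_{\lambda_k}}\le c_0$ independent of $k$; in particular $\{u_k\}$ is bounded in $H^1(\mathbb{Z}^3)$ and $\int_{\mathbb{Z}^3}\lambda_k h u_k^2\,d\mu\le c_0^2$. Passing to a subsequence, I would take $u_k\rightharpoonup u_0$ in $H^1(\mathbb{Z}^3)$ with $u_k\to u_0$ pointwise. For any $x$ with $h(x)>0$, the estimate $u_k(x)^2\le c_0^2/(\lambda_k h(x))\to0$ forces $u_0(x)=0$, so $u_0$ is supported on $\Omega$ and, $\Omega$ being finite, $u_0\in\mathcal{H}_0^1(\Omega)$. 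To upgrade to strong convergence I would argue as in Lemma \ref{l2}: splitting $V=D_M\cup(V\setminus D_M)$ with $D_M=\{h<M\}$ finite by $(h_2)$ and $\Omega\subset D_M$, pointwise convergence handles the finite part while $\int_{V\setminus D_M}(u_k-u_0)^2\,d\mu=\int_{V\setminus D_M}u_k^2\,d\mu\le c_0^2/(\lambda_k M)\to0$; interpolating with the uniform $\ell^\infty$ bound gives $u_k\to u_0$ in $\ell^q(V)$ for all $q\in[2,\infty]$.

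The strong convergence lets me pass to the limit in all subcritical quantities, and in particular $K_V(u_k)\to K_\Omega(u_0)$ and $\int_V|u_k^\pm|^p\log(u_k^\pm)^2\,d\mu\to\int_\Omega|u_0^\pm|^p\log(u_0^\pm)^2\,d\mu$ (via the bound (\ref{3.9}) and dominated convergence). To see that $u_0^\pm\neq0$, I discard the nonnegative Kirchhoff and coupling terms in $(J_{\lambda_k}'(u_k),u_k^\pm)=0$ (recall $K_V(u_k)\le0$) to get $\int_V|u_k^\pm|^p\log(u_k^\pm)^2\,d\mu\ge\|u_k^\pm\|_{\mathcal{H}_{\lambda_k}}^2\ge\|u_k^\pm\|_{H^1}^2\ge\sigma^2>0$ by Lemma \ref{s1}; passing to the limit keeps the left side $\ge\sigma^2$, whence $u_0^\pm\neq0$. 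Next, taking $\liminf$ in the identity (\ref{3.8}) for $u_k$, dropping the nonnegative term $\int_V\lambda_k h(u_k^\pm)^2\,d\mu$, and using weak lower semicontinuity for the gradient and Kirchhoff factors together with the convergences just recorded, I obtain $(J_\Omega'(u_0),u_0^\pm)\le0$. Lemma \ref{l7} then gives a unique pair with $s_0u_0^++t_0u_0^-\in\mathcal{M}_\Omega$, and Lemma \ref{l6} forces $s_0,t_0\in(0,1]$.

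It remains to compare the reduced energies $\Phi_\bullet(u)=J_\bullet(u)-\tfrac1p(J_\bullet'(u),u)$. Since $u_k\in\mathcal{M}_{\lambda_k}$,
\[
m_{\lambda_k}=\Big(\tfrac12-\tfrac1p\Big)\|u_k\|_{\mathcal{H}_{\lambda_k}}^2+b\Big(\tfrac14-\tfrac1p\Big)\|\nabla u_k\|_2^4+\tfrac{2}{p^2}\int_V|u_k|^p\,d\mu,
\]
so dropping $\int_V\lambda_k h u_k^2\,d\mu\ge0$ and applying weak lower semicontinuity together with the $\ell^p$-convergence yields $\liminf_k m_{\lambda_k}\ge\Phi_\Omega(u_0)$. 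On the other hand, exactly as in the final display of Lemma \ref{l9}, the monotonicity produced by $s_0,t_0\in(0,1]$, $K_\Omega(u_0)\le0$ and the norm-splitting identities (the $\mathcal{H}_0^1(\Omega)$-analogue of Proposition \ref{o}) gives $m_\Omega\le J_\Omega(s_0u_0^++t_0u_0^-)=\Phi_\Omega(s_0u_0^++t_0u_0^-)\le\Phi_\Omega(u_0)$. Combining the two, $\liminf_k m_{\lambda_k}\ge m_\Omega$, which with the upper bound closes the proof. The main obstacle is the compactness step: one must simultaneously prevent mass from escaping to the region where $h$ is large (controlled by $(h_2)$ and $\int\lambda_k h u_k^2\le c_0^2$) and keep both nodal parts $u_0^\pm$ nontrivial in the limit; the nonlocal Kirchhoff factors and the sign-indefinite coupling $K_V$ add bookkeeping, but they carry a favorable sign and converge under the strong $\ell^q$ convergence.
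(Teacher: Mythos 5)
Your proposal is correct and follows essentially the same route as the paper: extract the minimizers $u_{\lambda_k}\in\mathcal{M}_{\lambda_k}$, use the uniform bound of Lemma \ref{s2} to get weak/pointwise/strong $\ell^q$ limits with $u_0$ supported in $\Omega$, derive $(J_\Omega'(u_0),u_0^{\pm})\le 0$ by lower semicontinuity, project back onto $\mathcal{M}_\Omega$ via Lemmas \ref{l7}--\ref{l6} with $s_0,t_0\in(0,1]$, and close with the reduced functional $J-\frac1p(J',\cdot)$ comparison against $m_\lambda\le m_\Omega$. Your only deviations are presentational and in fact slightly more careful than the paper's: you prove $u_0|_{\Omega^c}=0$ directly from $u_k(x)^2\le c_0^2/(\lambda_k h(x))$ rather than by the paper's energy blow-up contradiction, you justify the strong $\ell^q$ convergence (which the paper asserts), and you verify $u_0^{\pm}\ne 0$ explicitly.
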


\begin{proof}
For any $\lambda>0$, since $m_{\lambda} \leq m_{\Omega}$ , up to a subsequence, we may take a sequence $\lambda_{k} \rightarrow\infty$ such that
\begin{equation}\label{4.2}
\lim\limits_{k \rightarrow \infty} m_{\lambda_{k}}=\eta \leq m_{\Omega}.
\end{equation}
By theorem \ref{t1}, there exists a sequence $\{u_{\lambda_{k}}\}\subset \mathcal{M}_{\lambda_{k}}$, least energy sign-changing solutions of the equation (\ref{0.2}), such that $J_{\lambda_{k}}(u_{\lambda_k})=m_{\lambda_{k}}>0$. By Lemma \ref{s2}, one sees that $\left\{u_{\lambda_{k}}\right\}$ is uniformly bounded in $\mathcal{H}_{\lambda_{k}}$. Consequently, $\left\{u_{\lambda_{k}}\right\}$ is also bounded in $H^1(V)$ and thus, up to a subsequence, there exists some $u_{0} \in H^1(V)$ such that
\[
\left\{\begin{array}{l}
u_{\lambda_{k}} \rightharpoonup u_{0},\quad\text { weakly in } H^1(V),\\
u_{\lambda_{k}} \rightarrow u_{0}, \quad\text { pointwise in } V, \\
u_{\lambda_{k}} \rightarrow u_{0},\quad \text { strongly in } \ell^{q}(V), q\in[2,\infty].
\end{array}\right.
\]
Then it follows from Lemma \ref{s1} that $u_{0} \not \equiv 0$. We claim that $u_0|_{\Omega^{c}}=0$. In fact, if there exists a vertex $x_{0} \in \Omega^{c}$ such that $u_{0}\left(x_{0}\right) \neq 0$. Since $u_k:=u_{\lambda_{k}} \in \mathcal{M}_{\lambda_{k}}$, by (\ref{0.3}), we get that $$(J'_{\lambda_k}(u_k),u_k)=(J'_{\lambda_k}(u_k),u_k^+)+(J'_{\lambda_k}(u_k),u_k^-) =0.$$
Moreover, since $h\left(x_{0}\right)>0, u_{k}(x_0) \rightarrow u_{0}(x_0) \neq 0$ and $\lambda_{k} \rightarrow\infty$, we get that
$$
\begin{aligned}
J_{\lambda_k}\left(u_{k}\right) =&\left[J_{\lambda_k}\left(u_{k}\right)-\frac{1}{p} (J_{\lambda_k}^{\prime}\left(u_{k}\right), u_{k})\right] \\
=&\left[(\frac{1}{2}-\frac{1}{p})\|u_k\|_{\mathcal{H}_\lambda}^{2}+(\frac{1}{4}-\frac{1}{p})b\left\|\nabla u_k\right\|_{2}^{4}+\frac{2}{p^2}\int_V |u_k|^p\,d\mu\right] \\
\geq &(\frac{1}{2}-\frac{1}{p})\|u_k\|_{\mathcal{H}_\lambda}^{2}\\ \geq & (\frac{1}{2}-\frac{1}{p}) \lambda_k h(x_0)|u_k(x_0)|^2\\ \rightarrow &\infty,\quad k\rightarrow \infty,
\end{aligned}
$$
which contradicts (\ref{4.2}). Hence the claim is completed.

Since $u_0|_{\Omega^{c}}=0$, by weak lower semicontinuity of the norm $\|\cdot\|_{H^1}$, Fatou's lemma and Lebesgue dominated theorem, we get that
$$
\begin{aligned}
&\|u_0^{+}\|_{\mathcal{H}_{0}^{1}(\Omega)}^{2}+b\left\|\nabla u_0\right\|_{\ell^2(\Omega\cup\partial\Omega)}^{2}\left(\|\nabla u_0^{+}\|_{\ell^2(\Omega\cup\partial\Omega)}^{2}-\frac{1}{2}K_\Omega(u_0)\right)-\frac{a}{2}K_{\Omega}(u_0)-\int_{\Omega}\left(u_0^{+}|^{p} \log \left(u_0^{+}\right)^{2}\right)^-\,d\mu\\ \leq & \|u_0^{+}\|_{H^1}^{2}+b\left\|\nabla u_0\right\|_{2}^{2}\left(\|\nabla u_0^{+}\|_{2}^{2}-\frac{1}{2}K_V(u_0)\right)-\frac{a}{2}K_{V}(u_0)-\int_{V}\left(u_0^{+}|^{p} \log \left(u_0^{+}\right)^{2}\right)^-\,d\mu \\
\leq & \liminf _{k \rightarrow\infty}\left[\|u_k^{+}\|_{\mathcal{H}_\lambda}^{2}+b\left\|\nabla u_k\right\|_{2}^{2}\left(\|\nabla u_k^{+}\|_{2}^{2}-\frac{1}{2}K_V(u_k)\right)-\frac{a}{2}K_{V}(u_k)-\int_{V}\left(|u_k^{+}|^{p} \log \left(u_k^{+}\right)^{2}\right)^-\,d\mu\right] \\
= & \liminf _{k \rightarrow\infty} \int_{V}\left(|u_k^{+}|^{p} \log \left(u_k^{+}\right)^{2}\right)^+\,d\mu \\
= & \int_{V}\left(|u_0^{+}|^{p} \log \left(u_0^{+}\right)^{2}\right)^+\,d\mu\\
= & \int_{\Omega}\left(|u_0^{+}|^{p} \log \left(u_0^{+}\right)^{2}\right)^+\,d\mu.
\end{aligned}
$$
The previous inequality means that
$$
\begin{aligned}
&(J_{\Omega}^{\prime}\left(u_{0}\right), u_{0}^{+})\\=&\|u_0^{+}\|_{\mathcal{H}_{0}^{1}(\Omega)}^{2}+b\left\|\nabla u_0\right\|_{\ell^2(\Omega\cup\partial\Omega)}^{2}\left(\|\nabla u_0^{+}\|_{\ell^2(\Omega\cup\partial\Omega)}^{2}-\frac{1}{2}K_\Omega(u_0)\right)-\frac{a}{2}K_{\Omega}(u_0)-\int_{\Omega}|u_0^{+}|^{p} \log \left(u_0^{+}\right)^{2}\,d\mu\leq0.
\end{aligned}
$$
Similarly, it holds that
$$
\begin{aligned}
&(J_{\Omega}^{\prime}\left(u_{0}\right), u_{0}^{-})\\=&\|u_0^{-}\|_{\mathcal{H}_{0}^{1}(\Omega)}^{2}+b\left\|\nabla u_0\right\|_{\ell^2(\Omega\cup\partial\Omega)}^{2}\left(\|\nabla u_0^{-}\|_{\ell^2(\Omega\cup\partial\Omega)}^{2}-\frac{1}{2}K_\Omega(u_0)\right)-\frac{a}{2}K_{\Omega}(u_0)-\int_{\Omega}|u_0^{-}|^{p} \log \left(u_0^{-}\right)^{2}\,d\mu\leq0.
\end{aligned}
$$
By Lemma \ref{l7} and Lemma \ref{l6}, there exist two constants $s, t \in(0,1]$ such that $\tilde{u}_0=s u_{0}^{+}+t u_{0}^{-} \in \mathcal{M}_{\Omega}$. Then
$$
\begin{aligned}
m_{\Omega} \leq &J_{\Omega}(\widetilde{u}_0)=J_{\Omega}\left(\widetilde{u}_0\right)-\frac{1}{p} (J_{\Omega}^{\prime}\left(\widetilde{u}_0\right), \widetilde{u}_0)\\=&(\frac{1}{2}-\frac{1}{p})\|\widetilde{u}_0\|_{\mathcal{H}_{0}^{1}(\Omega)}^{2}+b(\frac{1}{4}-\frac{1}{p})\left\|\nabla \widetilde{u}_0\right\|_{\ell^2(\Omega\cup\partial\Omega)}^{4}+\frac{2}{p^2}\int_\Omega |\widetilde{u}_0|^p\,d\mu\\=& (\frac{1}{2}-\frac{1}{p})\|s u_{0}^{+}+t u_{0}^{-}\|_{\mathcal{H}_{0}^{1}(\Omega)}^{2}+b(\frac{1}{4}-\frac{1}{p})\left\|\nabla s u_{0}^{+}+t u_{0}^{-}\right\|_{\ell^2(\Omega\cup\partial\Omega)}^{4}+\frac{2}{p^2}\int_\Omega |s u_{0}^{+}+t u_{0}^{-}|^p\,d\mu\\ \leq&(\frac{1}{2}-\frac{1}{p})\|u_{0}\|_{\mathcal{H}_\lambda}^{2}+b(\frac{1}{4}-\frac{1}{p})\left\|\nabla u_{0}\right\|_{2}^{4}+\frac{2}{p^2}\int_V |u_{0}|^p\,d\mu\\ \leq &
\liminf_{k \rightarrow\infty}\left[(\frac{1}{2}-\frac{1}{p})\|u_k\|_{\mathcal{H}_\lambda}^{2}+b(\frac{1}{4}-\frac{1}{p})\left\|\nabla u_k\right\|_{2}^{4}+\frac{2}{p^2}\int_V |u_k|^p\,d\mu\right]\\ =&
\liminf_{k \rightarrow\infty}\left[J_{\lambda_k}\left(u_{k}\right)-\frac{1}{p} (J_{\lambda_k}^{\prime}\left(u_{k}\right), u_{k})\right] \\=&\liminf _{k \rightarrow\infty} J_{\lambda_{k}}\left(u_{\lambda_{k}}\right)=\eta \leq m_{\Omega}.
\end{aligned}
$$
Hence we get that
$$\lim_{\lambda\rightarrow\infty} m_{\lambda}= m_{\Omega}.$$

\end{proof}

{\bf Proof of Theorem \ref{t3}} Let $\{u_{k}\} \subset \mathcal{M}_{\lambda_{k}}$ satisfy $J_{\lambda_{k}}(u_{k})=m_{\lambda_{k}}$. We prove that $\{u_{k}\}$ converges in $H^1(V)$ to a least energy sign-changing solution $u_{0}$ of the equation (\ref{1.8}).

By Lemma \ref{s2}, we get that $\{u_{k}\} \subset\mathcal{H}_{\lambda_{k}}$ is bounded, and hence bounded in $H^1(V)$. Therefore, up to a subsequence, there exists $u_{0} \in H^1(V)$ such that
\begin{equation}\label{0.8}
\begin{cases}
u_{k} \rightharpoonup u_{0},\quad \text { weakly in } H^1(V),\\
u_{k} \rightarrow u_{0},\quad \text { pointwise in } V, \\
u_{k} \rightarrow u_{0},\quad \text { strongly in } \ell^{q}(V),~ q\in[2,\infty].
\end{cases}
\end{equation}
Then it follows from Lemma \ref{s1} that $u_{0} \not \equiv 0$. As proved in Lemma \ref{s3}, we get that $u_0|_{\Omega^{c}}=0$.

We claim that, as $k \rightarrow\infty$, \begin{equation}\label{0.7}
\lambda_{k} \int_{V} h(x)\left|u_{k}^{ \pm}\right|^{2} d \mu \rightarrow 0,\qquad\text{and}\qquad \int_{V} |\nabla u_{k}^{ \pm}|^2\,d\mu \rightarrow \int_{V} \nabla u_{0}^{ \pm}|^2\,d \mu.
\end{equation}
In fact, if not, we may assume that
$$
\lim _{k \rightarrow\infty} \lambda_{k} \int_{V} h(x)\left|u_{k}^{ \pm}\right|^{2} d \mu=\delta>0.
$$
Since $u_0|_{\Omega^{c}}=0$, by weak lower semicontinuity of the norm $\|\cdot\|_{H^1}$, Fatou's lemma and Lebesgue dominated theorem, we obtain that
$$
\begin{aligned}
&\|u_0^{\pm}\|_{\mathcal{H}_{0}^{1}(\Omega)}^{2}+b\left\|\nabla u_0\right\|_{\ell^2(\Omega\cup\partial\Omega)}^{2}\left(\|\nabla u_0^{\pm}\|_{\ell^2(\Omega\cup\partial\Omega)}^{2}-\frac{1}{2}K_\Omega(u_0)\right)-\frac{a}{2}K_{\Omega}(u_0)-\int_{\Omega}\left(u_0^{\pm}|^{p} \log \left(u_0^{\pm}\right)^{2}\right)^-\,d\mu\\ \leq & \|u_0^{\pm}\|_{H^1}^{2}+b\left\|\nabla u_0\right\|_{2}^{2}\left(\|\nabla u_0^{\pm}\|_{2}^{2}-\frac{1}{2}K_V(u_0)\right)-\frac{a}{2}K_{V}(u_0)-\int_{V}\left(u_0^{\pm}|^{p} \log \left(u_0^{\pm}\right)^{2}\right)^-\,d\mu \\
< &\|u_0^{\pm}\|_{H^1}^{2}+\delta+b\left\|\nabla u_0\right\|_{2}^{2}\left(\|\nabla u_0^{\pm}\|_{2}^{2}-\frac{1}{2}K_V(u_0)\right)-\frac{a}{2}K_{V}(u_0)-\int_{V}\left(u_0^{\pm}|^{p} \log \left(u_0^{\pm}\right)^{2}\right)^-\,d\mu \\
\leq & \liminf _{k \rightarrow\infty}\left[\|u_k^{\pm}\|_{\mathcal{H}_\lambda}^{2}+b\left\|\nabla u_k\right\|_{2}^{2}\left(\|\nabla u_k^{\pm}\|_{2}^{2}-\frac{1}{2}K_V(u_k)\right)-\frac{a}{2}K_{V}(u_k)-\int_{V}\left(|u_k^{\pm}|^{p} \log \left(u_k^{\pm}\right)^{2}\right)^-\,d\mu\right] \\
= & \liminf _{k \rightarrow\infty} \int_{V}\left(|u_k^{\pm}|^{p} \log \left(u_k^{\pm}\right)^{2}\right)^+\,d\mu \\
= & \int_{V}\left(|u_0^{\pm}|^{p} \log \left(u_0^{\pm}\right)^{2}\right)^+\,d\mu\\
= & \int_{\Omega}\left(|u_0^{\pm}|^{p} \log \left(u_0^{\pm}\right)^{2}\right)^+\,d\mu,
\end{aligned}
$$
which means that
\begin{equation}\label{9.0}
\begin{aligned}
&(J_{\Omega}^{\prime}\left(u_{0}\right), u_{0}^{\pm})\\=&\|u_0^{\pm}\|_{\mathcal{H}_{0}^{1}(\Omega)}^{2}+b\left\|\nabla u_0\right\|_{\ell^2(\Omega\cup\partial\Omega)}^{2}\left(\|\nabla u_0^{\pm}\|_{\ell^2(\Omega\cup\partial\Omega)}^{2}-\frac{1}{2}K_\Omega(u_0)\right)-\frac{a}{2}K_{\Omega}(u_0)-\int_{\Omega}|u_0^{\pm}|^{p} \log \left(u_0^{\pm}\right)^{2}\,d\mu<0.
\end{aligned}
\end{equation}

If
$$
\lim _{k \rightarrow\infty} \int_{V} |\nabla u_{k}^{ \pm}|^2\,d \mu>\int_{V} |\nabla u_{0}^{ \pm}|^2\,d \mu.
$$
By similar arguments as above, we can also get (\ref{9.0}). Then it follows from  Lemma \ref{l7} and Lemma \ref{l6} that there exist two constants $s, t \in(0,1)$ such that $\tilde{u}_0=s u_{0}^{+}+t u_{0}^{-} \in \mathcal{M}_{\Omega}$. Therefore, we have that
$$
\begin{aligned}
m_{\Omega} \leq &J_{\Omega}(\widetilde{u}_0)=J_{\Omega}\left(\widetilde{u}_0\right)-\frac{1}{p} (J_{\Omega}^{\prime}\left(\widetilde{u}_0\right), \widetilde{u}_0)\\=&(\frac{1}{2}-\frac{1}{p})\|\widetilde{u}_0\|_{\mathcal{H}_{0}^{1}(\Omega)}^{2}+b(\frac{1}{4}-\frac{1}{p})\left\|\nabla \widetilde{u}_0\right\|_{\ell^2(\Omega\cup\partial\Omega)}^{4}+\frac{2}{p^2}\int_\Omega |\widetilde{u}_0|^p\,d\mu\\=& (\frac{1}{2}-\frac{1}{p})\|s u_{0}^{+}+t u_{0}^{-}\|_{\mathcal{H}_{0}^{1}(\Omega)}^{2}+b(\frac{1}{4}-\frac{1}{p})\left\|\nabla s u_{0}^{+}+t u_{0}^{-}\right\|_{\ell^2(\Omega\cup\partial\Omega)}^{4}+\frac{2}{p^2}\int_\Omega |s u_{0}^{+}+t u_{0}^{-}|^p\,d\mu\\ <&(\frac{1}{2}-\frac{1}{p})\|u_{0}\|_{\mathcal{H}_\lambda}^{2}+b(\frac{1}{4}-\frac{1}{p})\left\|\nabla u_{0}\right\|_{2}^{4}+\frac{2}{p^2}\int_V |u_{0}|^p\,d\mu\\ \leq &
\liminf_{k \rightarrow\infty}\left[(\frac{1}{2}-\frac{1}{p})\|u_k\|_{\mathcal{H}_\lambda}^{2}+(\frac{1}{4}-\frac{1}{p})b\left\|\nabla u_k\right\|_{2}^{4}+\frac{2}{p^2}\int_V |u_k|^p\,d\mu\right]\\ =&
\liminf_{k \rightarrow\infty}\left[J_{\lambda_k}\left(u_{k}\right)-\frac{1}{p} (J_{\lambda_k}^{\prime}\left(u_{k}\right), u_{k})\right] \\=&\liminf _{k \rightarrow\infty} J_{\lambda_{k}}\left(u_{k}\right)\\= &m_{\Omega}.
\end{aligned}
$$
Clearly, this is a contradiction. Hence, we complete the claim (\ref{0.7}).

By (i) of Proposition \ref{o}, (\ref{0.8}), (\ref{0.7}), Lebesgue dominated theorem and $u_0|_{\Omega^{c}}=0$, we derive that
\begin{equation}\label{0.9}
\begin{aligned}
\lim\limits_{k\rightarrow\infty}\|u_k\|_{\mathcal{H}_{\lambda_k}}^{2}=&\lim\limits_{k\rightarrow\infty}\int_V a|\nabla u_k|^2+(\lambda_k h(x)+1)|u_k|^2\,d\mu\\=&a\lim\limits_{k\rightarrow\infty}\left(\int_V|\nabla u^+_k|^2\,d\mu+\int_V|\nabla u^-_k|^2\,d\mu-K_V(u_k)\right)\\&+\lim\limits_{k\rightarrow\infty}(\lambda_k h(x)+1)\left(\int_V|u^{+}_k|^2\,d\mu+\int_V|u^{-}_k|^2\,d\mu\right)\\=&a\left(\int_{\Omega\cup\partial\Omega}|\nabla u^+_0|^2\,d\mu+\int_{\Omega\cup\partial\Omega}|\nabla u^-_0|^2\,d\mu-K_\Omega(u_0)\right)\\&+\left(\int_\Omega|u^{+}_0|^2\,d\mu+\int_\Omega|u^{-}_0|^2\,d\mu\right)\\=&\int_{\Omega\cup\partial\Omega}a|\nabla u_0|^2\,d\mu+\int_\Omega|u_0|^2\,d\mu\\=&\|u_0\|^2_{\mathcal{H}_{0}^{1}(\Omega)}.
\end{aligned}
\end{equation}
Since $u_k\rightarrow u_0$ pointwise in $V$, we have that
$u_{k} \rightarrow u_{0}$ in $H^1(V).$

 In the following, we prove that $u_0$ is a least energy sign-changing solution of the equation (\ref{1.8}). By (\ref{0.8}), (\ref{0.7}), (\ref{0.9}), Lebesgue dominated theorem and $u_0|_{\Omega^{c}}=0$, one has that
$$
\begin{aligned}
o(1)=&(J_{\lambda_k}^{\prime}(u_k), u_k^{\pm})\\=&\|u_k^{\pm}\|_{\mathcal{H}_\lambda}^{2}+b\left\|\nabla u_k^{\pm}\right\|_{2}^{4}-\int_{V}\left|u_k^{\pm}\right|^{p} \log \left(u_k^{\pm}\right)^{2}\,d\mu-\frac{a}{2}K_{V}(u_k)+\frac{b}{2}K^2_V(u_k) 
\\&+b\left\|\nabla u_k^{+}\right\|_{2}^{2}\left\|\nabla u_k^{-}\right\|_{2}^{2}-\frac{b}{2}K_V(u_k)\left(\left\|\nabla u_k^{+}\right\|_{2}^{2}+\left\|\nabla u_k^{-}\right\|_{2}^{2}\right)
-bK_V(u_k)\|\nabla u_k^{\pm}\|^2_2\\=&\|u_0^{\pm}\|_{\mathcal{H}_{0}^{1}(\Omega)}^{2}+b\left\|\nabla u_0^{\pm}\right\|_{\ell^2(\Omega\cup\partial\Omega)}^{4}-\int_{\Omega}\left|u_0^{\pm}\right|^{p} \log \left(u_0^{\pm}\right)^{2}\,d\mu-\frac{a}{2}K_{\Omega}(u_0)+\frac{b}{2}K^2_\Omega(u_0) 
\\&+b\left\|\nabla u_0^{+}\right\|_{\ell^2(\Omega\cup\partial\Omega)}^{2}\left\|\nabla u_0^{-}\right\|_{\ell^2(\Omega\cup\partial\Omega)}^{2}-\frac{b}{2}K_\Omega(u_0)\left(\left\|\nabla u_0^{+}\right\|_{\ell^2(\Omega\cup\partial\Omega)}^{2}+\left\|\nabla u_0^{-}\right\|_{\ell^2(\Omega\cup\partial\Omega)}^{2}\right)
\\&-bK_\Omega(u_0)\|\nabla u_0^{\pm}\|^2_{\ell^2(\Omega\cup\partial\Omega)}+o(1)\\=&(J_\Omega(u_0),u_0^{\pm})+o(1).
\end{aligned}
$$
This means that $(J_\Omega(u_0),u_0^{\pm})=0.$ Hence $u_0\in\mathcal{M}_\Omega$.
Then it follows from (\ref{0.9}) that
$$
\begin{aligned}
\mathcal{M}_\Omega=&\lim\limits_{k\rightarrow\infty}J_{\lambda_k}\left(u_{k}\right) \\=&\lim\limits_{k\rightarrow\infty}\left[J_{\lambda_k}\left(u_{k}\right)-\frac{1}{p} (J_{\lambda_k}^{\prime}\left(u_{k}\right), u_{k})\right] \\
=&\lim\limits_{k\rightarrow\infty}\left[(\frac{1}{2}-\frac{1}{p})\|u_k\|_{\mathcal{H}_\lambda}^{2}+(\frac{1}{4}-\frac{1}{p})b\left\|\nabla u_k\right\|_{2}^{4}+\frac{2}{p^2}\int_V |u_k|^p\,d\mu\right] \\
=&\left[(\frac{1}{2}-\frac{1}{p})\|u_0\|_{\mathcal{H}_1^0(\Omega)}^{2}+(\frac{1}{4}-\frac{1}{p})b\left\|\nabla u_0\right\|_{\ell^2(\Omega\cup\partial\Omega)}^{4}+\frac{2}{p^2}\int_{\Omega} |u_0|^p\,d\mu\right]\\ =& J_{\Omega}(u_0)-\frac{1}{p}(J'_{\Omega}(u_0),u_0)\\=&J_\Omega(u_0).
\end{aligned}
$$
Hence $u_{0}$ is a least energy sign-changing solution of the equation (\ref{1.8}).\qed

\
\

{ \bf Declaration}
The author declares that there are no conflict of interest regarding the publication of this paper.

\
\

{ \bf Data availability}
No data was used for the research described in this paper.

\end{document}